\title{On the second homology group of the Torelli subgroup of $\text{Aut}(F_n)$\vspace{-12pt}}
\author{Matthew Day\thanks{Supported in part by NSF grant DMS-1206981}\ \ and Andrew Putman\thanks{Supported in part by NSF grant DMS-1255350 and the Alfred P.\ Sloan Foundation}\vspace{-8pt}}
\date{}
\newcommand{\arxiv}[1]{\href{http://arxiv.org/abs/#1}{{\tt arXiv:#1}}}
\theoremstyle{plain}
\newtheorem{theorem}{Theorem}[section]
\newtheorem{maintheorem}{Theorem}
\newtheorem{proposition}[theorem]{Proposition}
\newtheorem{lemma}[theorem]{Lemma}
\newtheorem{claims}{Claim}
\newtheorem{step}{Step}
\newcommand\BeginClaims{\setcounter{claims}{0}}
\newcommand\BeginSteps{\setcounter{step}{0}}
\theoremstyle{definition}
\newtheorem{definition}[theorem]{Definition}
\theoremstyle{remark}
\newtheorem{remark}[theorem]{Remark}
\newtheorem*{example}{Example}
\DeclareMathOperator{\Hom}{Hom}
\DeclareMathOperator{\Ker}{ker}
\DeclareMathOperator{\IA}{IA}
\DeclareMathOperator{\GL}{GL}
\DeclareMathOperator{\SL}{SL}
\newcommand\R{\ensuremath{\mathbb{R}}}
\newcommand\Z{\ensuremath{\mathbb{Z}}}
\newcommand\Q{\ensuremath{\mathbb{Q}}}
\DeclareMathOperator{\HH}{H}
\DeclareMathOperator{\Aut}{Aut}
\DeclareMathOperator{\End}{End}
\newcommand\Set[2]{\ensuremath{\{\text{#1 $|$ #2}\}}}
\newcommand\Pres[2]{\ensuremath{\langle \text{$#1$ $|$ $#2$} \rangle}}
\newcommand\LPres[3]{\ensuremath{\langle \text{$#1$ $|$ $#2$ $|$ $#3$} \rangle}}
\newcommand\llangle{\ensuremath{\langle \hspace{-3pt} \langle}}
\newcommand\rrangle{\ensuremath{\rangle \hspace{-3pt} \rangle}}
\DeclareMathOperator{\Mag}{MA}
\newcommand\Quotient{\ensuremath{\mathcal{Q}}}
\newcommand{\BigFreeProd}{\mathop{\mbox{\Huge{$\ast$}}}}
\newcommand\Mul[2]{\ensuremath{\text{M}_{#1,#2}}}
\newcommand\Mulcomm[3]{\ensuremath{\text{M}_{#1,[#2,#3]}}}
\newcommand\Con[2]{\ensuremath{\text{C}_{#1,#2}}}
\newcommand\QMulcomm[3]{\ensuremath{\mathfrak{M}_{#1,[#2,#3]}}}
\newcommand\QCon[2]{\ensuremath{\mathfrak{C}_{#1,#2}}}
\newcommand\ttheta{\ensuremath{\theta}}
\newcommand\tphi{\ensuremath{\phi}}
\newcommand\Conj[1]{\ensuremath{\ldbrack #1 \rdbrack}}
\newcommand\Lax[1]{\ensuremath{( #1 )_{\pm}}}
\newcommand\Bases{\ensuremath{\mathcal{B}}}
\newcommand\ABases{\ensuremath{\widehat{\mathcal{B}}}}
\newcommand\Relh[1]{\ensuremath{\mathfrak{h}_{#1}}}
\newcommand\co\colon
\newcommand\BirKer{\ensuremath{\mathcal{K}}}
\newcommand\HIAT{\ensuremath{\widetilde{\HH_2(\IA_n)}}}
\newcommand\Elt[1]{\ensuremath{\|#1\|}}
\newcommand{\eqnum}{\leavevmode\hfill\refstepcounter{equation}\textup{\tagform@{\theequation}}}
\begin{document}

\maketitle

\vspace{-60pt}
\begin{abstract}
Let $\IA_n$ be the Torelli subgroup of $\Aut(F_n)$.  We give an explicit
finite set of generators for $\HH_2(\IA_n)$ as a $\GL_n(\Z)$-module.  Corollaries
include a version of surjective representation stability for $\HH_2(\IA_n)$,
the vanishing of the $\GL_n(\Z)$-coinvariants of $\HH_2(\IA_n)$, and the vanishing
of the second rational homology group of the level $\ell$ congruence subgroup of
$\Aut(F_n)$.  Our generating set is derived from a new group presentation for $\IA_n$ which is infinite but which has a simple
recursive form.
\end{abstract}

\section{Introduction}

The {\em Torelli subgroup} of the automorphism group of a free group $F_n$ on $n$ letters,
denoted $\IA_n$, is the kernel of the action of $\Aut(F_n)$ on $F_n^{\text{ab}} \cong \Z^n$.
The group of automorphisms of $\Z^n$ is $\GL_n(\Z)$ and the resulting
map $\Aut(F_n) \rightarrow \GL_n(\Z)$ is easily seen to be surjective, so we have a short exact sequence
\[1 \longrightarrow \IA_n \longrightarrow \Aut(F_n) \longrightarrow \GL_n(\Z) \longrightarrow 1.\]
Though it has a large literature, the cohomology and combinatorial group theory 
of $\IA_n$ remain quite mysterious.
Magnus \cite{MagnusGenerators} proved that $\IA_n$ is finitely generated,
and thus that $\HH_1(\IA_n)$ has finite rank.  Krsti\'c--McCool \cite{KrsticMcCool} later
showed that $\IA_3$ is not finitely presentable.  This was improved by
Bestvina--Bux--Margalit \cite{BestvinaBuxMargalitIA}, who showed that $\HH_2(\IA_3)$ has infinite
rank.  However, for $n \geq 4$ it is not known whether or not $\IA_n$ is finitely presentable
or whether or not $\HH_2(\IA_n)$ has finite rank. 

\paragraph{Representation-theoretic finiteness.}
It seems to be very difficult to determine whether or not $\HH_2(\IA_n)$ has finite rank, so
it is natural to investigate weaker sorts of finiteness properties.
Since inner automorphisms act trivially on homology, the conjugation action of $\Aut(F_n)$ on 
$\IA_n$ induces an action of $\GL_n(\Z)$ on $\HH_k(\IA_n)$.  Church--Farb \cite[Conjecture 6.7]{ChurchFarbRepStability}
conjectured that $\HH_k(\IA_n)$ is finitely generated as a $\GL_n(\Z)$-module.  In other words,
they conjectured that there exists a finite subset of $\HH_k(\IA_n)$ whose $\GL_n(\Z)$-orbit
spans $\HH_k(\IA_n)$.  Our first main theorem verifies their conjecture for $k=2$.

\begin{maintheorem}[Generators for $\HH_2(\IA_n)$]
\label{maintheorem:h2finite}
For all $n \geq 2$, there exists a finite subset of $\HH_2(\IA_n)$ whose $\GL_n(\Z)$-orbit spans
$\HH_2(\IA_n)$.
\end{maintheorem}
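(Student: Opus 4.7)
My plan is to follow the strategy indicated in the abstract: extract an explicit finite $\GL_n(\Z)$-module generating set for $\HH_2(\IA_n)$ from a presentation of $\IA_n$ that, although infinite, has enough equivariant structure to make its second group homology accessible.

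The starting point is Magnus's finite generating set for $\IA_n$: the conjugation-type automorphisms $C_{ij}\co x_i\mapsto x_j x_i x_j^{-1}$ and the commutator transvections $\Mulcomm{i}{j}{k}\co x_i\mapsto x_i[x_j,x_k]$. These do not by themselves give a presentation with a tractable relation set, so I would enlarge the generating set by recursively adjoining a family of auxiliary generators encoding higher commutator-type elements of $\IA_n$. The infinite presentation $\IA_n=F/N$ I aim to construct should have (a) recursion-defining relations expressing each auxiliary generator as a word in previously defined ones, (b) commutator relations between pairs of generators that reflect direct computation in $\Aut(F_n)$ and the Johnson-style linearization of $\IA_n$, and (c) enough additional relations to make $F/N$ actually equal to $\IA_n$. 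The key requirement is that the generating set and the relator set each be preserved by the conjugation action of $\Aut(F_n)$ and that each decompose into finitely many $\GL_n(\Z)$-orbits.

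With such a presentation in hand, I would invoke Hopf's formula $\HH_2(\IA_n)\cong (N\cap [F,F])/[F,N]$ to reinterpret $\HH_2(\IA_n)$ as the $\GL_n(\Z)$-module of identities among relators modulo trivial ones. Because the presentation is $\GL_n(\Z)$-equivariant, so is this identification, and the theorem reduces to exhibiting a finite list of such identities whose $\GL_n(\Z)$-orbits generate the whole module.

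This last reduction is where the main difficulty lies: there is no a~priori reason for the identity module to be finitely generated even when the generator and relator sets have finitely many $\GL_n(\Z)$-orbits. I would attack it by induction on a suitably defined complexity of an identity. Given an identity involving a ``long'' auxiliary generator, the recursive structure of the presentation should allow one to substitute that generator by its defining word, at the cost of introducing conjugates by Magnus generators (which remain within a known $\GL_n(\Z)$-orbit). The inductive step must then show that the resulting identity differs from a strictly simpler one by a $\GL_n(\Z)$-translate of one of finitely many explicit \emph{basic} identities, which must be pinned down by hand. Controlling how the $\Aut(F_n)$-conjugation action interacts with the recursive structure, and verifying that the induction closes on a finite list, is the central technical challenge and should occupy the bulk of the argument.
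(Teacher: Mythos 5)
Your proposal shares two correct ideas with the paper — Hopf's formula applied $\GL_n(\Z)$-equivariantly, and the observation that the Magnus generators map to a basis of $\HH_1(\IA_n)\cong\Hom(\Z^n,\bigwedge^2\Z^n)$, which collapses the numerator $\llangle R\rrangle\cap[F,F]$ to $\llangle R\rrangle$ when one works over $S_{\Mag}(n)$ — but the central mechanism you propose diverges from the paper's and has a genuine gap.

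You suggest enlarging the generating set by \emph{recursively adjoining auxiliary generators} (an infinite set) and then controlling $\HH_2$ by an induction on the "complexity" of identities among relators, substituting long auxiliary generators by their defining words. The termination of this induction is precisely what is unproven: there is no a~priori bound guaranteeing that the induction closes on a finite list, and you acknowledge this yourself. Without a structural reason why only finitely many "basic identities" (up to $\GL_n(\Z)$) can appear, the argument does not go through. This is not a detail; it is the whole theorem.

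The paper's route sidesteps this by keeping the generating set \emph{finite} ($S_{\IA}(n)$ is just the conjugation moves and all signed commutator transvections) and pushing the infinite, recursive structure entirely into the \emph{relations} via an L-presentation: a finite set of basic relations $R^0_{\IA}(n)$ together with a finite set $E_{\IA}(n)$ of substitution endomorphisms of $F(S_{\IA}(n))$ whose induced endomorphisms of $\IA_n$ generate the conjugation action of $\Aut(F_n)$. Once such a finite L-presentation is established (Theorem D in the paper), Hopf's formula \emph{immediately} gives that $\HH_2(\IA_n)$ is generated, as a $\GL_n(\Z)$-module, by the images of the finitely many basic relations — no induction on identities is needed. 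The hard work is thus transferred to proving the L-presentation, which the paper does by applying the presentation-from-group-actions theorem to the action of $\IA_n$ on the (augmented) complex of partial bases, together with a Birman exact sequence for $\IA_n$ to handle vertex stabilizers inductively. A further non-trivial step (Section 6) is that the basic relations do not all lie in $[F,F]$, so one must separately show how to convert the offending relations (R0, R5, R6) into linear combinations of the commutator relators H1–H9; your proposal does not account for this issue because it does not pin down a specific generating set. In short: the finiteness you are trying to extract by induction must instead be built into the presentation itself, and proving that finite L-presentation is where the real work lives.
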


Each element of our finite subset corresponds to a map of a surface
into a classifying space for $\IA_n$; the genera of these surfaces range from $1$ to $3$.
Table~\ref{table:commutatorrelatorssmall} below lists our finite set of $\GL_n(\Z)$-generators for $\HH_2(\IA_n)$.
This table expresses these generators using specific ``commutator relators'' in $\IA_n$; see below for
how to translate these into elements of $\HH_2(\IA_n)$.

\begin{remark}
The special case $n=3$ of Theorem \ref{maintheorem:h2finite} was proven in the
unpublished thesis of Owen Baker \cite{BakerThesis}.  His proof uses a ``Jacobian''
map on Outer space and is quite different from our proof.
It seems difficult to generalize his proof to higher $n$.
\end{remark}

\paragraph{Surjective representation stability.}
The generators for $\HH_2(\IA_n)$ given in Theorem \ref{maintheorem:h2finite} are explicit enough
that they can be used to perform a number of interesting calculations.  The first verifies
part of a conjecture of Church--Farb that asserts that the homology groups of $\IA_n$ are
``representation stable''.  We begin with some background.  An increasing sequence
\[G_1 \subset G_2 \subset G_3 \subset \cdots\]
of groups is {\em homologically stable} if for all $k \geq 1$, the $k^{\text{th}}$ homology group of $G_n$
is independent of $n$ for $n \gg 0$.  Many sequences of groups are homologically stable; 
see \cite{HatcherWahl} for a bibliography.
In particular, Hatcher--Vogtmann \cite{HatcherVogtmannCerf} proved
this for $\Aut(F_n)$.  However, it is known that $\IA_n$ is {\em not}
homologically stable; indeed, even $\HH_1(\IA_n)$ does not stabilize (see below).

Church--Farb \cite{ChurchFarbRepStability} introduced a new form of homological
stability for groups like $\IA_n$ whose homology groups possess natural group actions.  For
$\IA_n$, they conjectured that for all $k \geq 1$, there exists some $n_k \geq 1$ such
that the following two properties hold for all $n \geq n_k$.
\begin{compactitem}
\item (Injective stability) The map $\HH_k(\IA_n) \rightarrow \HH_k(\IA_{n+1})$ is injective.
\item (Surjective representation stability) The map $\HH_k(\IA_n) \rightarrow \HH_k(\IA_{n+1})$
is surjective ``up to the action of $\GL_{n+1}(\Z)$''; more precisely, the $\GL_{n+1}(\Z)$-orbit
of its image spans $\HH_k(\IA_{n+1})$.
\end{compactitem}
\begin{remark}
In fact, they made this conjecture in \cite{ChurchFarbJacobian} for the Torelli subgroup
of the mapping class group; however, they have informed us that they also conjecture it
for $\IA_n$.
\end{remark}
Our generators for $\HH_2(\IA_n)$ are ``the same in each dimension'' starting at $n=6$, so we are able to
derive the following special case of Church--Farb's conjecture.

\begin{maintheorem}[Surjective representation stability for $\HH_2(\IA_n)$]
\label{maintheorem:surjectivestability}
For $n \geq 6$, the $\GL_{n+1}(\Z)$-orbit of the image of the natural map $\HH_2(\IA_n) \rightarrow \HH_2(\IA_{n+1})$
spans $\HH_2(\IA_{n+1})$.
\end{maintheorem}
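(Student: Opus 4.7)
The plan is to derive Theorem~\ref{maintheorem:surjectivestability} as a direct consequence of the explicit generating set produced in the proof of Theorem~\ref{maintheorem:h2finite}. By that theorem, there is a finite subset $S_n \subset \HH_2(\IA_n)$ whose $\GL_n(\Z)$-orbit spans $\HH_2(\IA_n)$, and the introduction tells us that each element of $S_n$ is represented by a map of a closed orientable surface of genus $g \in \{1,2,3\}$ into a classifying space for $\IA_n$. Moreover, the introduction promises that beginning at $n = 6$ this generating set has the ``same form'' in every rank. My plan is to extract from this uniformity the concrete statement that every generator in $S_{n+1}$ is supported on a free factor of $F_{n+1}$ of bounded rank, and then use the transitive action of $\GL_{n+1}(\Z)$ on such free factors to push each generator into the image of $\HH_2(\IA_n) \to \HH_2(\IA_{n+1})$.

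First, for each $x \in S_{n+1}$ I would identify an explicit rank-$m$ free factor $F' \leq F_{n+1}$ with $m \leq 6$ such that $x$ lies in the image of the inclusion-induced map $\HH_2(\IA_{F'}) \to \HH_2(\IA_{n+1})$. This amounts to reading off the combinatorial data of the generators: each is built from a bounded number of Magnus-type automorphisms, and $F'$ is the free factor generated by the basis elements that appear in their formulas. The bound $m \leq 6$ is what accommodates surfaces of genus up to $3$ and accounts for the threshold $n \geq 6$.

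Next, since $n \geq 6 \geq m$, the group $\GL_{n+1}(\Z)$ acts transitively on the set of rank-$m$ direct summands of $\Z^{n+1}$. Hence some $\varphi \in \GL_{n+1}(\Z)$ carries the image of $F'$ in $F_{n+1}^{\text{ab}}$ into the rank-$n$ summand corresponding to the standard inclusion $F_n \hookrightarrow F_{n+1}$, and lifting through $\Aut(F_{n+1}) \twoheadrightarrow \GL_{n+1}(\Z)$ we may assume $\varphi \cdot x$ lies in the image of $\HH_2(\IA_n) \to \HH_2(\IA_{n+1})$. Combining these two steps, every element of $S_{n+1}$ lies in the $\GL_{n+1}(\Z)$-orbit of the image of the stabilization map, and then Theorem~\ref{maintheorem:h2finite} for $\IA_{n+1}$ finishes the proof.

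The main obstacle is the first step: one must verify, directly from the explicit list produced for Theorem~\ref{maintheorem:h2finite}, that every generator is supported on a free factor of rank at most $6$ and that no ``genuinely new'' generator appears at rank $n+1$ that is not already present (on a smaller free factor) at rank $n$. This is exactly the sense in which the generators are said to be ``the same in each dimension'' once $n \geq 6$; once that uniformity of the list is in hand, the transitivity argument and the conclusion are formal.
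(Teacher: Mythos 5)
Your proposal follows the same core approach as the paper's proof: observe that each generator from Theorem~\ref{maintheorem:h2finite} is supported on a bounded-rank free factor, and then use symmetries of $F_{n+1}$ to carry that free factor into the standard $F_n \subset F_{n+1}$. The difference is that the paper does this with the subgroup of permutation matrices $S_{n+1} \subset \GL_{n+1}(\Z)$ alone: by inspection of Table~\ref{table:commutatorrelators}, each relator in $S_H(n+1)$ is parametrized by at most six distinct indices, and so (once $n \geq 6$) it is obtained from a relator in $S_H(n)$ by permuting basis elements. Permutation automorphisms lift canonically to $\Aut(F_{n+1})$ and manifestly carry a free factor $\langle x_{i_1},\ldots,x_{i_m}\rangle$ into $F_n$, so no further argument is needed.

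This is worth noting because your appeal to the transitivity of $\GL_{n+1}(\Z)$ on rank-$m$ direct summands of $\Z^{n+1}$ followed by a lift is not quite airtight as stated: if $\varphi \in \GL_{n+1}(\Z)$ carries the summand $[F'] \subset \Z^{n+1}$ into $\Z^n$, it does not immediately follow that some lift $\tilde\varphi \in \Aut(F_{n+1})$ carries the free factor $F'$ itself into $F_n$ (elements with the right abelianization need not lie in $F_n$). You would need to argue instead with the transitivity of $\Aut(F_{n+1})$ on rank-$m$ free factors, which does repair the argument; but since the free factors that actually arise are all of the coordinate form $\langle x_{i_1},\ldots,x_{i_m}\rangle$, invoking permutations is both cheaper and cleaner. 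One further small correction: the bound of six basis elements does not come from the surface genus as you suggest (the genus controls the number of commutators, not the number of indices); it comes directly from counting the distinct parameter indices in the relators, with H2, a genus-one relator, being the one that uses six. Your ``main obstacle'' is therefore already settled by inspecting the table, exactly as the paper does.
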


\begin{remark}
Boldsen--Dollerup \cite{BoldsenDollerup} proved a theorem similar to Theorem \ref{maintheorem:surjectivestability}
for the {\em rational} second homology group of the Torelli subgroup of the mapping class group.  Their
proof is different from ours; in particular, they were not able to prove an analogue
of Theorem \ref{maintheorem:h2finite}.  It seems hard to use their techniques to prove Theorem
\ref{maintheorem:surjectivestability}.  Similarly, our proof uses special properties of $\IA_n$
and does not work for the Torelli subgroup of the mapping class group.
\end{remark}

\paragraph{Coinvariants.}
Our tools do not allow us to easily distinguish different homology classes; indeed, for all we
know our generators for $\HH_2(\IA_n)$ might be redundant.
This prevents
us from proving injective stability for $\HH_2(\IA_n)$.  However, we still can prove
some interesting vanishing results.  If $G$ is a group and $M$ is a $G$-module, then
the {\em coinvariants} of $G$ acting on $M$, denoted $M_G$, are the largest quotient
of $M$ on which $G$ acts trivially.  More precisely, $M_G = M/K$ with
$K = \langle \text{$m - g \cdot m$ $|$ $g \in G$, $m \in M$} \rangle$.  We then have
the following.

\begin{maintheorem}[Vanishing coinvariants]
\label{maintheorem:coinvariants}
For $n \geq 6$, we have $(\HH_2(\IA_n))_{\GL_n(\Z)} = 0$.
\end{maintheorem}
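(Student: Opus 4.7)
The plan is to reduce Theorem~\ref{maintheorem:coinvariants} to a finite check using Theorem~\ref{maintheorem:h2finite}, and then to kill each generator by exhibiting explicit $\GL_n(\Z)$-symmetries of its support. By Theorem~\ref{maintheorem:h2finite}, there is a finite explicit set $S \subset \HH_2(\IA_n)$ whose $\GL_n(\Z)$-orbit spans $\HH_2(\IA_n)$. Passing to coinvariants, the images of $S$ generate $(\HH_2(\IA_n))_{\GL_n(\Z)}$ as an abelian group. Thus it suffices to show that every $s \in S$ vanishes in $(\HH_2(\IA_n))_{\GL_n(\Z)}$.

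Each $s \in S$ arises from a map of a surface of genus $g \in \{1,2,3\}$ into a classifying space for $\IA_n$, and concretely is built from a bounded number of Magnus-type elements. In particular, $s$ is ``supported'' on at most a fixed small number of basis vectors of $F_n^{\mathrm{ab}} \cong \Z^n$. The defining relation of the coinvariants is $s \equiv g \cdot s$ for all $g \in \GL_n(\Z)$, so the game is to exhibit convenient $g$'s. The plan is as follows: for each generator $s$, choose a specific $g \in \GL_n(\Z)$ acting as a sign change or as the swap of two basis vectors inside the support of $s$, in such a way that $g \cdot s = -s$ holds already in $\HH_2(\IA_n)$ (because of the skew-symmetry of the commutator-type formulas defining $s$). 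This yields $2s \equiv 0$ in the coinvariants. To upgrade $2s \equiv 0$ to $s \equiv 0$, the plan is to use a second element $h \in \GL_n(\Z)$, built using basis vectors \emph{outside} the support of $s$ (available because $n \geq 6$), of the form $h = I + E_{ij}$ with $i,j$ chosen so that $h \cdot s = s + s'$ where $s'$ is a class supported on a disjoint set of basis vectors. In the coinvariants $s' \equiv 0$ follows by the first step applied to $s'$, and therefore $s \equiv 0$.

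The main obstacle will be the detailed book-keeping for the genus-$2$ and genus-$3$ generators, whose explicit description involves several interacting Magnus generators and whose transformation under elementary matrices is the least transparent. In particular, the threshold $n \geq 6$ should appear exactly because a genus-$3$ generator can occupy up to six basis vectors of $\Z^n$, and killing it in coinvariants via the two-step maneuver above requires one's symmetries to have room to act both inside and outside the support. Once the case analysis for the genus-$1$, genus-$2$, and genus-$3$ generators is complete, assembling the resulting congruences in $(\HH_2(\IA_n))_{\GL_n(\Z)}$ immediately gives the theorem.
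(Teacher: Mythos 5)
Your reduction to a finite check via Theorem~\ref{maintheorem:h2finite} is exactly right and matches the paper, but the two-step mechanism you propose for killing each generator has a genuine logical gap, and the paper's actual tactic is different and more direct.

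First, your Step~1 (find $g$ with $g \cdot s = -s$, hence $2s \equiv 0$) is a dead end. Even granting that such sign-reversing $g$'s exist for every generator in Table~\ref{table:commutatorrelators} --- which is not obvious for H4, H8, H9 and the special-coincidence cases --- the conclusion $2s\equiv 0$ only shows that the coinvariant group is $2$-torsion, and that information is never actually used in your Step~2.

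Second, Step~2 as written does not prove $s \equiv 0$. From $h\cdot s = s + s'$ you correctly deduce that $s' = h\cdot s - s$ is a coboundary, hence $s' \equiv 0$ in the coinvariants. But then you assert ``therefore $s \equiv 0$,'' and this does not follow: $s' \equiv 0$ says nothing about $s$. (Your parenthetical claim that ``$s' \equiv 0$ follows by the first step applied to $s'$'' is also off --- the first step only yields $2s'\equiv 0$; the correct reason $s'\equiv 0$ is that $s'$ is a coboundary.) The missing link is that $s'$ must be $\GL_n(\Z)$-equivalent to $s$ (e.g.\ $s' = P\cdot s$ for a permutation matrix $P$), in which case $s \equiv s' \equiv 0$. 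You never state or verify this equivalence; for some generators the coboundary term $s'$ has a \emph{different} shape (a ``special'' case with coinciding indices, or a lower-numbered generator type), so the equivalence is not automatic.

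The paper's proof in \S\ref{section:coinvariants} avoids both issues by showing, via the explicit identities of Lemmas~\ref{lemma:7pt1}--\ref{lemma:7pt6}, that each generator $h_i(\dotsc)$ equals a coboundary plus a sum of generators already shown to be coboundaries, in a carefully chosen order (H1 generic, then H1 special, then H3, then H2, H4, H5, generic H6, H7, special H6, H8, H9). For instance equation~\eqref{eq:generich1} exhibits a generic H1 directly as $M\cdot r - r$ for another H1 class $r$, equation~\eqref{eq:specialh1} then exhibits the special H1 as a coboundary of a generic one, and so on. This needs no ``upgrade from $2s\equiv 0$'' and no unsupported appeal to symmetries. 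Your observation that $n\geq 6$ enters through the generators with the largest support is roughly correct, but the precise reason is that the coboundary identity~\eqref{eq:generich6} for the five-parameter generic H6 introduces a sixth distinct index, not a worry about genus-$3$ support per se.
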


\begin{remark}
In \cite[Conjecture 6.5]{ChurchFarbRepStability}, Church--Farb conjectured that the
$\GL_n(\Z)$-invariants in $\HH^k(\IA_n;\Q)$ are $0$.  For $k = 1$, this follows
from the known computation of $\HH^1(\IA_n;\Q)$; see below.  Theorem \ref{maintheorem:coinvariants}
implies that this also holds for $k=2$.
\end{remark}

\paragraph{Linear congruence subgroups.}
For $\ell \geq 2$, the {\em level $\ell$ congruence subgroup} of $\Aut(F_n)$, denoted $\Aut(F_n,\ell)$,
is the kernel of the natural map $\Aut(F_n) \rightarrow \GL_n(\Z/\ell)$; one should think of it
as a ``mod-$\ell$'' version of $\IA_n$.  It is natural to conjecture that for all $k \geq 1$,
there exists some $n_k \geq 1$ such that $\HH_k(\Aut(F_n,\ell);\Q) \cong \HH_k(\Aut(F_n);\Q)$ for
$n \geq n_k$; an analogous theorem for congruence subgroups of $\GL_n(\Z)$ is due
to Borel \cite{BorelStability1}.  Galatius \cite{GalatiusAut} proved that $\HH_k(\Aut(F_n);\Q) = 0$
for $n \gg 0$, so this conjecture really asserts that $\HH_k(\Aut(F_n,\ell);\Q) = 0$ for $n \gg 0$.
The case $k=1$ of this is known.  Indeed, Satoh \cite{SatohCongruence} calculated
the abelianization of $\Aut(F_n,\ell)$ for $n \geq 3$ 
and the answer consisted entirely of torsion, so $\HH_1(\Aut(F_n,\ell);\Q)=0$
for $n \geq 3$.  Using Theorem \ref{maintheorem:h2finite}, we will prove the case $k=2$.

\begin{maintheorem}[Second homology of congruence subgroups]
\label{maintheorem:congruence}
For $\ell \geq 2$ and $n \geq 6$, we have $\HH_2(\Aut(F_n,\ell);\Q) = 0$.
\end{maintheorem}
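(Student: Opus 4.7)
The plan is to apply the Lyndon--Hochschild--Serre spectral sequence to the short exact sequence
\[1 \to \IA_n \to \Aut(F_n,\ell) \to G_n(\ell) \to 1,\]
where $G_n(\ell)$ denotes the level-$\ell$ congruence subgroup of $\GL_n(\Z)$ (i.e.\ the image of $\Aut(F_n,\ell)$ under the quotient to $\GL_n(\Z)$). With $\Q$-coefficients this gives
\[E^2_{p,q} = \HH_p(G_n(\ell); \HH_q(\IA_n;\Q)) \Longrightarrow \HH_{p+q}(\Aut(F_n,\ell); \Q),\]
and to conclude $\HH_2(\Aut(F_n,\ell); \Q) = 0$ it suffices to verify the vanishing of the three terms $E^2_{2,0}$, $E^2_{1,1}$, and $E^2_{0,2}$ on the total-degree-$2$ diagonal.

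The two outer terms I would handle by appealing to classical results of Borel on the cohomology of arithmetic groups. For $E^2_{2,0} = \HH_2(G_n(\ell); \Q)$, Borel's stability theorem identifies the stable rational cohomology of any congruence subgroup of $\SL_n(\Z)$ with that of $\SL_n(\Z)$; the latter is trivial below degree $5$, and degree $2$ falls well inside the stable range for $n \geq 6$. For $E^2_{1,1} = \HH_1(G_n(\ell); \HH_1(\IA_n; \Q))$, the theorem of Cohen--Pakianathan, Farb, and Kawazumi identifies $\HH_1(\IA_n; \Q) \cong \Hom(\Z^n, \wedge^2 \Z^n)\otimes\Q$ as a $\GL_n(\Z)$-module; this is a nontrivial irreducible rational algebraic representation of $\GL_n(\Q)$, and Borel's vanishing theorem for the cohomology of arithmetic groups with coefficients in a nontrivial irreducible algebraic representation then kills this entry.

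The main obstacle is the remaining term $E^2_{0,2} = (\HH_2(\IA_n;\Q))_{G_n(\ell)}$. Theorem \ref{maintheorem:coinvariants} gives vanishing of the $\GL_n(\Z)$-coinvariants, which (since $\GL_n(\Z)/G_n(\ell) \cong \GL_n(\Z/\ell)$ is finite and we are over $\Q$) only forces the $\GL_n(\Z/\ell)$-invariant part of $E^2_{0,2}$ to vanish. To close the gap, I would use the \emph{explicit} generators of $\HH_2(\IA_n)$ provided by Theorem \ref{maintheorem:h2finite}: each is a surface class of genus $1$, $2$, or $3$ built out of Magnus generators of $\IA_n$, and the task reduces to exhibiting each such generator as a sum of terms of the form $\xi - \gamma\cdot\xi$ for $\gamma \in G_n(\ell)$. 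The hard point is that $G_n(\ell)$ contains none of the signed permutation matrices that are the natural source of symmetry-based relations used to kill generators in the $\GL_n(\Z)$-coinvariants; these will need to be replaced by congruence-level conjugations. A useful technical lever is that $G_n(\ell)$ is Zariski dense in $\SL_n(\Q)$, so any relation visible inside a finite-dimensional algebraic $\GL_n(\Q)$-subrepresentation carrying (a piece of) a given generator automatically descends from $\GL_n(\Z)$-coinvariants to $G_n(\ell)$-coinvariants. The essential work is a case analysis of the generators of Theorem \ref{maintheorem:h2finite} to confirm that this substitution can be carried out for each one.
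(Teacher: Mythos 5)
Your treatment of the two outer terms $E^2_{2,0}$ and $E^2_{1,1}$ is exactly the paper's: Borel stability kills $\HH_2(\GL_n(\Z,\ell);\Q)$, and Borel's vanishing theorem with nontrivial irreducible algebraic coefficients kills $\HH_1(\GL_n(\Z,\ell);\HH_1(\IA_n;\Q))$. You also correctly identify the crux: Theorem~\ref{maintheorem:coinvariants} only kills the $\GL_n(\Z/\ell)$-invariant part of $(\HH_2(\IA_n;\Q))_{\GL_n(\Z,\ell)}$, so something extra is needed.

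The gap is in the mechanism you propose to close that crux. The Zariski-density argument rests on the premise that each generator of $\HH_2(\IA_n;\Q)$ lies in a finite-dimensional algebraic $\GL_n(\Q)$-subrepresentation. But the $\GL_n(\Z)$-action on $\HH_2(\IA_n;\Q)$ comes from the conjugation action of $\Aut(F_n)$ on $\IA_n$; it has no known extension to $\GL_n(\Q)$, and there is no reason to expect the $\GL_n(\Z)$-orbit of a generator to span a finite-dimensional algebraic subrepresentation. Without such a structure, Zariski density of $\GL_n(\Z,\ell)$ in $\SL_n$ does not let you transfer coinvariant identities from $\GL_n(\Z)$ down to the congruence subgroup. (This is precisely why $E^2_{0,2}$ is harder than $E^2_{1,1}$, where the coefficient module $\Hom(\Z^n,\wedge^2\Z^n)\otimes\Q$ genuinely \emph{is} algebraic.)

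What the paper actually does, in Lemma~\ref{lemma:rationalh2coinv}, is an elementary $\ell$-th--power telescoping argument that makes no appeal to algebraicity. The explicit identities of Lemmas~\ref{lemma:7pt2}--\ref{lemma:7pt6} express each generator $s$ in the form $f\cdot r - r$ for an elementary transvection $f$ and a generator $r$ with $f\cdot s = s$ in coinvariants; telescoping gives $f^{\ell}\cdot r - r = \ell s$, and since $f^{\ell}\in\Aut(F_n,\ell)$ maps into $\GL_n(\Z,\ell)$, one concludes $\ell s = 0$, hence $s=0$ over $\Q$. This is the ingredient your proposal is missing: you need that the coboundary-witnessing automorphism can be chosen with $\ell$-th power in the congruence subgroup, and that the target is fixed, so the relation can be iterated --- not density of $G_n(\ell)$ in an ambient algebraic group.
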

The key to our proof is that Theorem \ref{maintheorem:h2finite} allows us to show that
the image of $\HH_2(\IA_n;\Q)$ in $\HH_2(\Aut(F_n,\ell);\Q)$ vanishes; this allows us
to derive Theorem \ref{maintheorem:congruence} using standard techniques.

\begin{remark}
The second author proved an analogue of Theorem \ref{maintheorem:congruence} for congruence
subgroups of the mapping class group in \cite{PutmanH2Congruence}.  The techniques
in \cite{PutmanH2Congruence} are different from those in the present paper and
it seems difficult to prove Theorem \ref{maintheorem:congruence} via those techniques.
\end{remark}

\paragraph{Basic elements of Torelli.}
We now wish to describe our generating set for $\HH_2(\IA_n)$.  This requires
introducing some basic elements of $\IA_n$.  Let $\{x_1,\ldots,x_n\}$ be a
free basis for $F_n$.  We then make the following definitions.
\begin{compactitem}
\item For distinct $1 \leq i,j \leq n$, let $\Con{x_i}{x_j} \in \IA_n$ be defined via the formulas
\[\Con{x_i}{x_j}(x_i) = x_j x_i x_j^{-1} \quad \text{and} \quad \Con{x_i}{x_j}(x_{\ell}) = x_{\ell} \quad \text{if $\ell \neq i$}.\]
\item For $\alpha,\beta,\gamma \in \{\pm 1\}$ and distinct $1 \leq i,j,k \leq n$, 
let $\Mulcomm{x_i^{\alpha}}{x_j^{\beta}}{x_k^{\gamma}} \in \IA_n$ be defined via the formulas
\[\Mulcomm{x_i^{\alpha}}{x_j^{\beta}}{x_k^{\gamma}}(x_i^{\alpha}) = [x_j^{\beta},x_k^{\gamma}] x_i^{\alpha} \quad \text{and}
\quad \Mulcomm{x_i^{\alpha}}{x_j^{\beta}}{x_k^{\gamma}}(x_{\ell}) = x_{\ell} \quad \text{if $\ell \neq i$}.\]
\end{compactitem}
Observe that by definition
\[M_{x_i^{-1},[x_j^{\beta},x_k^{\gamma}]}(x_i^{-1}) = [x_j^{\beta},x_k^{\gamma}] x_i^{-1} \quad \text{and} \quad M_{x_i^{-1},[x_j^{\beta},x_k^{\gamma}]}(x_i) = x_i [x_j^{\beta},x_k^{\gamma}]^{-1}.\]
We call $\Con{x_i}{x_j}$ a {\em conjugation move} and $\Mulcomm{x_i^{\alpha}}{x_j^{\beta}}{x_k^{\gamma}}$ 
a {\em commutator transvection}.

\paragraph{Surfaces in a classifying space: our generators.}
A {\em commutator relator} in $\IA_n$ is a formula of the form $[a_1,b_1] \cdots [a_g,b_g] = 1$ with $a_i,b_i \in \IA_n$.
Given such a commutator relator $r$, let $\Sigma_g$ be a genus $g$ surface.  There is 
a continuous map $\zeta \co \Sigma_g \rightarrow K(\IA_n,1)$ that takes the standard basis for
$\pi_1(\Sigma_g)$ to $a_1,b_1,\ldots,a_g,b_g \in \IA_n$.
We obtain an element $\Relh{r} = \zeta_{\ast}([\Sigma_g]) \in \HH_2(\IA_n)$.
With this notation, the generators for $\HH_2(\IA_n)$ given by Theorem \ref{maintheorem:h2finite} are the elements
$\Relh{r}$ where $r$ is one of the relators in Table \ref{table:commutatorrelatorssmall}.

\begin{table}[t!]
\begin{tabular}{p{0.95\textwidth}}
\toprule
\vspace{-5pt}\begin{enumerate}\setlength{\itemsep}{0ex}
\item[H1.] $[\Con{x_a}{x_b},\Con{x_c}{x_d}]=1$, possibly with $b=d$.
\item[H2.] $[\Mulcomm{x_a^\alpha}{x_b^\beta}{x_c^\gamma},\Mulcomm{x_d^\delta}{x_e^\epsilon}{x_f^\zeta}]=1$, 
possibly with $\{b,c\}\cap\{e,f\}\neq \varnothing$ or with $x_a^{\alpha}=x_d^{-\delta}$, as long as $x_a^\alpha\neq x_d^\delta$, $a\notin\{e,f\}$ and $d\notin \{b,c\}$.
\item[H3.] $[\Con{x_a}{x_b},\Mulcomm{x_c^\gamma}{x_d^\delta}{x_e^\epsilon}]=1$, possibly with $b\in\{d,e\}$, if $c\notin\{a,b\}$ and $a\notin\{c,d,e\}$.
\item[H4.] $[\Con{x_c}{x_b}^\beta\Con{x_a}{x_b}^\beta,\Con{x_c}{x_a}^\alpha]=1$.
\item[H5.] $[\Con{x_a}{x_c}^{-\gamma},\Con{x_a}{x_d}^{-\delta}][\Con{x_a}{x_b}^{-\beta},\Mulcomm{x_b^\beta}{x_c^\gamma}{x_d^\delta}]=1$.
\item[H6.] $[\Mulcomm{x_a^\alpha}{x_b^\beta}{x_c^\gamma},\Mulcomm{x_d^\delta}{x_a^\alpha}{x_e^\epsilon}]
[\Mulcomm{x_d^\delta}{x_a^\alpha}{x_e^\epsilon},\Mulcomm{x_d^\delta}{x_c^\gamma}{x_b^\beta}]
[\Mulcomm{x_d^\delta}{x_c^\gamma}{x_b^\beta},\Con{x_d}{x_e}^{-\epsilon}]
=1$, possibly with $b=e$ or $c=e$.
\item[H7.] $[\Mulcomm{x_c^\gamma}{x_a^\alpha}{x_d^\delta},\Con{x_a}{x_b}^\beta]
[\Con{x_c}{x_d}^{-\delta},\Mulcomm{x_c^\gamma}{x_a^\alpha}{x_b^\beta}]
[\Mulcomm{x_c^\gamma}{x_a^\alpha}{x_b^\beta},\Mulcomm{x_c^\gamma}{x_a^\alpha}{x_d^\delta}]
=1$, 
possibly with $b=d$.
\item[H8.] $[\Mulcomm{x_a^\alpha}{x_b^\beta}{x_c^\gamma},\Con{x_a}{x_d}^\delta\Con{x_b}{x_d}^\delta\Con{x_c}{x_d}^\delta]=1$.
\item[H9.] $[\Con{x_a}{x_c}^\gamma\Con{x_b}{x_c}^\gamma,\Con{x_a}{x_b}^\beta\Con{x_c}{x_b}^\beta]
[\Mulcomm{x_a^\alpha}{x_b^\beta}{x_c^\gamma},\Con{x_b}{x_a}^{\alpha}\Con{x_c}{x_a}^\alpha] = 1$.
\end{enumerate}\\
\bottomrule
\end{tabular}
\caption{
The set of commutator relators whose associated elements of $\HH_2(\IA_n)$ generate
it as a $\GL_n(\Z)$-module.
Distinct letters represent distinct indices unless stated otherwise.
}
\label{table:commutatorrelatorssmall}
\end{table}


\paragraph{The Johnson homomorphism.}
To motivate our proof of Theorem \ref{maintheorem:h2finite}, we must first recall
the computation of $\HH_1(\IA_n)$, which is due independently to
Farb \cite{FarbIA}, Kawazumi \cite{KawazumiMagnus}, and Cohen--Pakianathan \cite{CohenPakianathan}.
The basic tool is the Johnson homomorphism, which was introduced by Johnson \cite{JohnsonHomo} in the context
of the Torelli subgroup of the mapping class group (though it also appears in
earlier work of Andreadakis \cite{Andreadakis}).  See \cite{SatohSurvey} for a survey
of the $\IA_n$-version of it.  The Johnson homomorphism is a homomorphism
\[\tau\colon \IA_n \rightarrow \Hom(\Z^n,\textstyle{\bigwedge^2} \Z^n)\]
that arises from studying the action of $\IA_n$ on the second nilpotent truncation
of $F_n$.  It can be defined as follows.  For $z \in F_n$, let $[z] \in \Z^n$ be the associated
element of the abelianization of $F_n$.  Consider $f \in \IA_n$.  For $x \in F_n$,
we have $f(x) \cdot x^{-1} \in [F_n,F_n]$.  There is a natural surjection $\rho\colon [F_n,F_n] \rightarrow \bigwedge^2 \Z^n$
satisfying $\rho([a,b]) = [a] \wedge [b]$; the kernel of $\rho$ is $[F_n,[F_n,F_n]]$.  We can then
define a map $\tilde{\tau}_f \colon F_n \rightarrow \bigwedge^2 \Z^n$ via the formula $\tilde{\tau}_f(x) = \rho(f(x) \cdot x^{-1})$.
One can check that $\tilde{\tau}_f$ is a homomorphism.  It factors through a homomorphism
$\tau_f \colon \Z^n \rightarrow \bigwedge^2 \Z^n$.  We can then define $\tau \colon \IA_n \rightarrow \Hom(\Z^n,\bigwedge^2 \Z^n)$
via the formula $\tau(f) = \tau_f$.  One can check that $\tau$ is a homomorphism.

\paragraph{Generators and their images.}
Define
\[S_{\Mag}(n) = \Set{$\Con{x_i}{x_j}$}{$1 \leq i,j \leq n$ distinct } \cup \Set{$\Mulcomm{x_i}{x_j}{x_k}$}{$1 \leq i,j,k \leq n$ distinct, $j < k$}.\]
Magnus \cite{MagnusGenerators} proved that $\IA_n$ is generated by $S_{\Mag}(n)$; see 
\cite{DayPutmanComplex} and \cite{BestvinaBuxMargalitIA} for modern proofs.
For distinct $1 \leq i,j \leq n$, the image $\tau(\Con{x_i}{x_j}) \in \Hom(\Z^n,\bigwedge^2 \Z^n)$ is the homomorphism
defined via the formulas
\[[x_i] \mapsto [x_j] \wedge [x_i] \quad \text{and} \quad [x_{\ell}] \mapsto 0 \quad \text{if $\ell \neq i$}.\]
Similarly, for distinct $1 \leq i,j,k \leq n$ with $j < k$, the image $\tau(\Mulcomm{x_i}{x_j}{x_k}) \in \Hom(\Z^n,\bigwedge^2 \Z^n)$ is the homomorphism
defined via the formulas
\[[x_i] \mapsto [x_j] \wedge [x_k] \quad \text{and} \quad [x_{\ell}] \mapsto 0 \quad \text{if $\ell \neq i$}.\]
The key observation is that these form a {\em basis} for $\Hom(\Z^n,\bigwedge^2 \Z^n)$.

\paragraph{The abelianization.}
Let $F(S_{\Mag}(n))$ be the free group on $S_{\Mag}(n)$ and let $R_{\Mag}(n) \subset F(S_{\Mag}(n))$ be a
set of relations for $\IA_n$, so $\IA_n = \Pres{S_{\Mag}(n)}{R_{\Mag}(n)}$.  Since $\tau$
takes $S_{\Mag}(n)$ bijectively to a basis for the free abelian group $\Hom(\Z^n,\bigwedge^2 \Z^n)$, we
must have $R_{\Mag}(n) \subset [F(S_{\Mag}(n)),F(S_{\Mag}(n))]$.  This immediately implies that
$\HH_1(\IA_n) \cong \Hom(\Z^n,\bigwedge^2 \Z^n)$.  

\paragraph{Hopf's formula.}
But even more is true.  Recall that Hopf's formula \cite{BrownCohomology} says that if $G$ is a group
with a presentation $G = \Pres{S}{R}$, then
\[\HH_2(G) \cong \frac{\llangle R \rrangle \cap [F(S),F(S)]}{[F(S),\llangle R \rrangle]};\]
here $\llangle R \rrangle$ is the normal closure of $R$.  The intersection in the numerator of this
is usually hard to calculate, so Hopf's formula is not often useful for computation.  However, by
what we have said it simplifies for $\IA_n$ to
\begin{equation}
\label{eqn:h2iahopf}
\HH_2(\IA_n) \cong \frac{\llangle R_{\Mag}(n) \rrangle}{[F(S_{\Mag}(n),\llangle R_{\Mag}(n) \rrangle]}.
\end{equation}
This isomorphism is very concrete: an element $r \in \llangle R_{\Mag}(n) \rrangle$ is a commutator
relator, and the associated element of $\HH_2(\IA_n)$ is the homology class $\Relh{r}$ discussed above.

\paragraph{Summary and trouble.}
For $r \in \llangle R_{\Mag}(n) \rrangle$ and $z \in F(S_{\Mag}(n))$, the element
$z r z^{-1} r^{-1}$ lies in the denominator of \eqref{eqn:h2iahopf}, i.e.\ $[F(S_{\Mag}(n)),\llangle R_{\Mag}(n) \rrangle]$.
Hence $\Relh{z r z^{-1}} = \Relh{r}$.  It follows that $\HH_2(\IA_n)$ is generated by the set
$\Set{$\Relh{r}$}{$r \in R_{\Mag}(n)$}$.  In other words, to calculate generators for $\HH_2(\IA_n)$, it is enough
to find a presentation for $\IA_n$ with $S_{\Mag}(n)$ as its generating set.  However, this
seems like a difficult problem (especially if, as we suspect, $\IA_n$ is not
finitely presentable).  Moreover, the $\GL_n(\Z)$-action on $\HH_2(\IA_n)$ has not
yet appeared.

\paragraph{L-presentations.}
To incorporate the $\GL_n(\Z)$-action on $\HH_2(\IA_n)$ into our presentation for $\IA_n$,
we  use the notion of an L-presentation, which was introduced by Bartholdi \cite{BartholdiBranch} (we use a slight simplification of his definition).
An {\em L-presentation} for a group $G$ is a triple $\LPres{S}{R^0}{E}$,
where $S$ and $R^0$ and $E$ are as follows.
\begin{compactitem}
\item $S$ is a generating set for $G$.
\item $R^0 \subset F(S)$ is a set consisting of relations for $G$ (not necessarily complete).
\item $E$ is a subset of $\End(F(S))$.
\end{compactitem}
This data must satisfy the following condition.  Let $M \subset \End(F(S))$ be the monoid
generated by $E$.  Define $R = \Set{$f(r)$}{$f \in M$, $r \in R^0$}$.  Then we require
that $G = \Pres{S}{R}$.
Each element of $E$ descends to an element of $\End(G)$; we call the resulting subset 
$\widetilde{E} \subset \End(G)$ the {\em induced endomorphisms} of our L-presentation.  We
 say that our L-presentation is {\em finite} if the sets $S$ and $R^0$ and $E$ are all finite.

In our examples, the induced endomorphisms of our L-presentations will actually be automorphisms.
Thus in the context of this paper one should think of an L-presentation as a group presentation
incorporating certain symmetries of a group.  Here is an easy example.

\begin{example}
Let $S = \Set{$z_i$}{$i \in \Z/p$}$ and $R^0 = \{z_0^2\}$.  Let $\psi \colon F(S) \rightarrow F(S)$ be the
homomorphism defined via the formula $\psi(z_i) = z_{i+1}$.  Then $\LPres{S}{R^0}{\{\psi\}}$ is an L-presentation
for the free product of $p$ copies of $\Z/2$.
\end{example}

\paragraph{A finite L-presentation for Torelli.}
The conjugation action of $\Aut(F_n)$ on $\IA_n$ gives an injection $\Aut(F_n) \hookrightarrow \Aut(\IA_n)$.
If we could somehow construct a finite L-presentation $\LPres{S_{\Mag}(n)}{R_{\Mag}^0(n)}{E_{\Mag}(n)}$
for $\IA_n$ whose set of induced endomorphisms generated 
\[\Aut(F_n) \subset \Aut(\IA_n) \subset \End(\IA_n),\]
then Theorem \ref{maintheorem:h2finite} would immediately follow.  Indeed, 
since the $\GL_n(\Z)$-action on $\HH_2(\IA_n)$ is induced by the conjugation action
of $\Aut(F_n)$ on $\IA_n$, it would follow that the $\GL_n(\Z)$-orbit of the set
$\Set{$\Relh{r}$}{$r \in R^0_{\Mag}(n)$} \subset \HH_2(\IA_n)$ spanned $\HH_2(\IA_n)$.

Although we find the idea in the previous paragraph illuminating, we do not follow it strictly.
To make our L-presentation for $\IA_n$ easier to comprehend, we will use the following generating
set, which is larger than $S_{\Mag}$:
\begin{align*}
S_{\IA}(n) =\ &\Set{$\Con{x_i}{x_j}$}{$1 \leq i,j \leq n$ distinct} \\
&\quad\quad\cup \Set{$\Mulcomm{x_i^{\alpha}}{x_j^{\beta}}{x_k^{\gamma}}$}{$1 \leq i,j,k \leq n$ distinct, $\alpha,\beta,\gamma \in \{\pm 1\}$}.
\end{align*}
This has the advantage of making our relations and rewriting rules shorter, and making their meaning easier to understand.
It has the disadvantage of making the proof of Theorem~\ref{maintheorem:h2finite} less direct.
Our theorem giving an L-presentation for $\IA_n$ is as follows.

\begin{maintheorem}[Finite L-presentation for Torelli.]
\label{maintheorem:finitelpres}
For all $n \geq 2$, there exists a finite L-presentation
$\IA_n = \LPres{S_{\IA}(n)}{R_{\IA}^0(n)}{E_{\IA}(n)}$
whose set of induced endomorphisms generates $\Aut(F_n) \subset \Aut(\IA_n) \subset \End(\IA_n)$.
\end{maintheorem}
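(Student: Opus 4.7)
The plan is to obtain the finite L-presentation by starting from a finite presentation of $\Aut(F_n)$ and transferring it to the normal subgroup $\IA_n$ via a Reidemeister--Schreier-type procedure, using the conjugation action of $\Aut(F_n)$ to organize the resulting infinite relation set into finitely many orbits under a finite set of endomorphisms of $F(S_{\IA}(n))$.

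First I would fix a finite presentation $\Aut(F_n) = \langle T \mid Q \rangle$, for instance McCool's or Gersten's. For each $t \in T$ and each $s \in S_{\IA}(n)$, the element $tst^{-1}$ lies in $\IA_n$, so by Magnus's theorem it is expressible as some specific word $w_{t,s} \in F(S_{\IA}(n))$; fixing such expressions once and for all defines an endomorphism $\tilde{t}$ of $F(S_{\IA}(n))$, and I would let $E_{\IA}(n) = \{\tilde{t} : t \in T \cup T^{-1}\}$. Since $T$ generates $\Aut(F_n)$, the induced endomorphisms generate the image of $\Aut(F_n)$ under the conjugation map into $\End(\IA_n)$, as required. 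The relator set $R^0_{\IA}(n)$ would have two parts: a finite collection of ``bookkeeping'' relators reconciling the signed generators in $S_{\IA}(n) \setminus S_{\Mag}(n)$ with the unsigned Magnus generators, and the commutator relators listed in Table \ref{table:commutatorrelatorssmall}. Each such relator can be verified directly to hold in $\IA_n$, yielding a canonical surjection $\phi \co G \twoheadrightarrow \IA_n$, where $G$ denotes the L-presented group.

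The main obstacle is showing that $\phi$ is an isomorphism. The strategy is indirect: build an extension $1 \to G \to \widehat{G} \to \GL_n(\Z) \to 1$ by adjoining to $G$ one generator $\hat{t}$ for each $t \in T$ and imposing the conjugation rules $\hat{t} s \hat{t}^{-1} = \tilde{t}(s)$ together with relations of $\GL_n(\Z)$ lifted through a chosen splitting. The key claim is that the presentation $\langle T \mid Q\rangle$ descends to a homomorphism $\Aut(F_n) \to \widehat{G}$: this amounts to showing that every defining relation in $Q$, after being unwound via $tst^{-1} = \tilde{t}(s)$ into a word in $F(S_{\IA}(n))$, lies in the normal closure of $M \cdot R^0_{\IA}(n)$, where $M$ is the monoid generated by $E_{\IA}(n)$. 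Once such a map is constructed, its restriction to $\IA_n$ takes values in $G$ and provides an inverse to $\phi$, so $G \cong \IA_n$.

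Verifying this claim is the hard step. I would attack it by induction on $n$, exploiting a Birman-type exact sequence $1 \to K_n \to \IA_n \to \IA_{n-1} \to 1$ obtained by restricting to automorphisms fixing the last basis element $x_n$. Nielsen relations involving only generators that stabilize $x_n$ reduce to the inductive case, while the residual finite collection of relations is precisely what forces the list H1--H9 (together with the signed-letter bookkeeping relators) to appear in $R^0_{\IA}(n)$. Pinpointing this list is a concrete combinatorial computation of how Nielsen generators conjugate the elements of $S_{\IA}(n)$; it is tractable because only short words in the generators arise, but it is the place where the specific combinatorial shape of Table \ref{table:commutatorrelatorssmall} gets pinned down.
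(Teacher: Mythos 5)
Your approach — building an extension $\widehat{G}$ of the L-presented group $G$ by generators lifted from a finite presentation $\langle T\mid Q\rangle$ of $\Aut(F_n)$ and then arguing that $\widehat{G}\cong\Aut(F_n)$ — is a genuinely different route from the paper's. The paper's proof instead applies a theorem of the second author (\cite{PutmanPresentation}, quoted as Theorem \ref{theorem:presentation}) to the action of $\IA_n$ on the augmented complex of partial bases $\ABases_n$: this gives $\IA_n \cong \bigl(\BigFreeProd_{\Conj{z}}(\IA_n)_{\Conj z}\bigr)/(\text{edge and conjugation relators})$, and the theorem is proved by constructing a partial inverse $\widetilde\phi_{\Conj{x_n}}$ on one vertex stabilizer (by induction via the Birman exact sequence of \cite{DayPutmanBirmanIA}) and then transporting it to all other vertices using the $\Aut(F_n)$-action on $\Quotient_n$ furnished by the L-presentation's substitution endomorphisms. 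Your sketch shares with the paper the role of the Birman exact sequence and the use of a presentation of $\Aut(F_n)$, but it omits the simplicial-complex decomposition entirely, and that decomposition is what makes the paper's induction go through.

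Two concrete gaps. First, the exact sequence you invoke, $1\to K_n\to\IA_n\to\IA_{n-1}\to1$, does not exist: there is no natural surjection $\IA_n\to\IA_{n-1}$. The actual Birman-type sequence (see \S\ref{section:Lpreshom}) is $1\to\BirKer_{n-1,1}\to(\IA_n)_{\Conj{x_n}}\to\IA_{n-1}\to1$, living over the \emph{stabilizer} of the conjugacy class of $x_n$, not over all of $\IA_n$. That is precisely why the paper needs the complex of partial bases to glue the stabilizers together; without it, your induction has nothing to act on once you leave a single stabilizer. Second, the relator set you propose — H1--H9 from Table \ref{table:commutatorrelatorssmall} plus ``bookkeeping'' relators reconciling signed and unsigned generators — cannot serve as $R^0_{\IA}(n)$. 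The paper notes explicitly, in the remark following Theorem \ref{maintheorem:finitelpres}, that the Table \ref{table:commutatorrelatorssmall} relations are insufficient because they all lie in $[F(S_{\IA}(n)),F(S_{\IA}(n))]$ while the generators $S_{\IA}(n)$ are redundant in the abelianization. The actual $R^0_{\IA}(n)$ is Table \ref{table:basicrelsian}, and its relations R5, R6 (for instance $\Con{x_a}{x_b}^\beta\Mulcomm{x_a^\alpha}{x_b^\beta}{x_c^\gamma}\Con{x_a}{x_b}^{-\beta}=\Mulcomm{x_a^\alpha}{x_c^\gamma}{x_b^{-\beta}}$ and $\Mulcomm{x_a^{\alpha}}{x_b^\beta}{x_c^\gamma}\Mulcomm{x_a^{-\alpha}}{x_b^\beta}{x_c^\gamma}=[\Con{x_a}{x_c}^{-\gamma},\Con{x_a}{x_b}^{-\beta}]$) are substantive, non-commutator relations that are not ``bookkeeping'' and do not follow from H1--H9. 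They are essential: a good deal of \S\ref{section:h2gen} is devoted precisely to isolating and absorbing the contribution of R0, R5, R6 when passing from the L-presentation to the $\HH_2$ generators.

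Finally, your key claim — that every unwound relation from $Q$ lies in the normal closure of $M\cdot R^0_{\IA}(n)$ — is essentially what the paper's Proposition \ref{proposition:autaction} verifies (that the $\theta$-endomorphisms descend to a well-defined $\Aut(F_n)$-action on $\Quotient_n$, checked against Nielsen's presentation of $\Aut(F_n)$), but in the paper this is only one ingredient, not the whole proof. Establishing an $\Aut(F_n)$-action on $\Quotient_n$ does not by itself show that $\Quotient_n\to\IA_n$ is injective; one still needs the partial-bases complex to assemble a global inverse from the stabilizer-level inverses. So the structure you sketch, as stated, does not close the loop.
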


We note that our presentation is not a presentation in which all relators are commutators.
The formulas for the $R^0_{\IA}(n)$ and $E_{\IA}(n)$ in our finite L-presentation are a little complicated, so we postpone
them until \S \ref{section:thepresentation}.
The formulas in that section make it clear that $R_{\IA}(n)$ does not lie in $[F(S_{\IA}(n)),F(S_{\IA}(n))]$.
Therefore we cannot prove Theorem~\ref{maintheorem:h2finite} simply by interepreting the relators as homology classes.
We must do something more complicated to deduce that theorem from our presentation.

\begin{remark}
The relations in Table \ref{table:commutatorrelatorssmall} are not sufficient for our L-presentation.  Indeed,
they all lie in the commutator subgroup, but the generators $S_{\IA}(n)$ do not map to linearly independent
elements of the abelianization.
\end{remark}

\paragraph{Sketch of proof.}
We close this introduction by briefly discussing how we prove Theorem \ref{maintheorem:finitelpres}.
In particular, we explain why it is easier to
verify an L-presentation than a standard presentation.  We remark that our proof
is inspired by a recent paper \cite{BrendleMargalitPutman} of the second author together with Brendle and Margalit
which constructed generators for the kernel of the Burau representation evaluated at $-1$.

Assume that we have guessed a finite L-presentation $\LPres{S_{\IA}(n)}{R^0_{\IA}(n)}{E_{\IA}(n)}$
for $\IA_n$ as in Theorem \ref{maintheorem:finitelpres} (we found the one that we use
by first throwing in all the relations we could think of and then attempting the proof
below; each time it failed it revealed a relation that we had missed).  Let $\Quotient_n$ be the group 
presented by the purported L-presentation.  There
is thus a surjection $\pi \colon \Quotient_n \rightarrow \IA_n$, and the goal of our proof will be to construct
an inverse map $\phi \colon \IA_n \rightarrow \Quotient_n$ satisfying $\phi \circ \pi = \text{id}$.  This will
involve several steps.

\BeginSteps
\begin{step}
We decompose $\IA_n$ in terms of stabilizers of conjugacy classes of primitive elements of $F_n$.
\end{step}
\vspace{-5pt}
For $z \in F_n$, let $\Conj{z}$ denote the union of the conjugacy classes of $z$ and $z^{-1}$.
A {\em primitive element} of $F_n$ is an element that forms part of a free basis.  Let
$\mathcal{C} = \Set{$\Conj{z}$}{$z \in F_n$ primitive}$.  The 
set $\mathcal{C}$ forms the set of vertices of a simplicial complex called the {\em complex
of partial bases} which is analogous to the complex of curves for the mapping class
group.  Applying a theorem of the second author \cite{PutmanPresentation} to the action
of $\IA_n$ on the complex of partial bases, we will obtain a decomposition
\begin{equation}
\label{eqn:decomposeia}
\IA_n = \BigFreeProd_{c \in \mathcal{C}} (\IA_n)_c / (\text{some relations});
\end{equation}
here $(\IA_n)_c$ denotes the stabilizer in $\IA_n$ of $c$.  The unlisted relations play
only a small role in our proof and can be ignored at this point.

\begin{step}
We use induction to construct a partial inverse.
\end{step}
\vspace{-5pt}
Fix some $c_0 \in \mathcal{C}$.  The stabilizer
$(\IA_n)_{c_0}$ is very similar to $\IA_{n-1}$; in fact, it is connected to $\IA_{n-1}$
by an exact sequence that is analogous to the Birman exact sequence for the
mapping class group.  We construct this exact sequence in the companion paper
\cite{DayPutmanBirmanIA}, which builds on our previous paper \cite{DayPutmanBirman}.
By analyzing this exact sequence and using induction, we will
construct a ``partial inverse'' $\phi_{c_0} \colon (\IA_n)_{c_0} \rightarrow \Quotient_n$.  We remark
that this step is where most of our relations arise---they actually are relations in the kernel
of the Birman exact sequence we construct in \cite{DayPutmanBirman}.

\begin{step}
We use the L-presentation to lift the conjugation action of $\Aut(F_n)$ on $\IA_n$ to $\Quotient_n$.
\end{step}
\vspace{-5pt}
Let $\widetilde{E}$ be the induced endomorphisms of our L-presentation.  We directly
prove that these endomorphisms actually give an action of $\Aut(F_n)$ on $\Quotient_n$
such that the projection map $\pi \colon \Quotient_n \rightarrow \IA_n$ is equivariant.  This is the key place where
we use properties of L-presentations; in general, it is difficult to construct
group actions on groups given by generators and relations.

\begin{step}
We use our group action to construct the inverse.
\end{step}
\vspace{-5pt}
The conjugation action of $\Aut(F_n)$ on $\IA_n$ transitively permutes the terms of
\eqref{eqn:decomposeia}.  Using our lifted action of $\Aut(F_n)$ on $\Quotient_n$ as a ``guide'', we
then ``move'' the partially defined inverse $\phi_{c_0}$ around and construct $\phi$ on the rest
of $\IA_n$, completing the proof.

\begin{remark}
In \cite{PutmanInfinite}, the second author constructed an infinite presentation of the
Torelli subgroup of the mapping class group.  Though this used the same result \cite{PutmanPresentation}
that we quoted above, the details are quite different.  One source of this difference
is that instead of an L-presentation with a finite generating set, the paper \cite{PutmanInfinite}
constructed an ordinary presentation with an infinite generating set.  In
our paper \cite{DayPutmanBasisFree}, we construct a different presentation
for $\IA_n$ which is in the same spirit as the presentation in \cite{PutmanInfinite}.
\end{remark}

\paragraph{Computer calculations.}
At several places in this paper, we will need to verify large numbers of equations in group presentations.
Rather than displaying these equations in the paper or leaving them as exercises, we use the GAP System to store and check our equations mechanically.
The code to verify these equations is in the file \verb+h2ia.g+, which
is distributed with this document and is also available on the authors' websites.  It is also attached
to this paper's arXiv posting.
We found the equations in this file by hand, and our proof does not rely on a computer search.
We will say more about this in \S \ref{section:somecomputations}, where said
calculations begin. 

This is a good place to note that our results rely strongly on the authors' earlier paper~\cite{DayPutmanBirmanIA} and the computer calculutions from that paper.
In that paper, we use a similar approach to automatically verify identities and prove the existence of certain homomorphisms to between groups given by presentations.
This is used in more than one place in the present paper, but most crucially in Proposition~\ref{proposition:mapfromkernel}.
The computations from our earlier paper are in a file \verb+iabes.g+, which is also available on the authors' websites and
on the arXiv.

\paragraph{Outline.}
We begin in \S \ref{section:thepresentation} by giving a precise statement of
the L-presentation whose existence
is asserted in Theorem \ref{maintheorem:finitelpres}.
Next, in \S \ref{section:prooftools} we discuss several tools that are needed for the
proof of Theorem \ref{maintheorem:finitelpres}.  
The proof of Theorem \ref{maintheorem:finitelpres}
is in \S \ref{section:theproof}.  This proof depends on some combinatorial group theory
calculations that are stated in \S \ref{section:thepresentation} and \S \ref{section:prooftools} but 
whose proofs are postponed until \S \ref{section:somecomputations}.  In \S \ref{section:h2gen},
we prove Theorem \ref{maintheorem:h2finite}.  That section also shows how to derive
Theorem \ref{maintheorem:surjectivestability} from Theorem \ref{maintheorem:h2finite}.
Finally, Theorems \ref{maintheorem:coinvariants} and
Theorem \ref{maintheorem:congruence} are proven in \S \ref{section:finalcalcs}.

\section{Our finite L-presentation}
\label{section:thepresentation}

We now discuss the relations $R_{\IA}^0(n)$ and endomorphisms $E_{\IA}(n)$ of our L-presentation.
Two calculations (Propositions \ref{proposition:verifyrels} and \ref{proposition:verifyendomorphisms})
are postponed until \S \ref{section:somecomputations}.

\paragraph{Relations.}
Our set $R_{\IA}^0(n)$ of relations consists of the relations in Table \ref{table:basicrelsian}.  
It is easy to verify that these are all indeed relations:
\begin{proposition}\label{proposition:verifyrels}
The relations $R_{\IA}^0(n)$ all hold when interpreted in $\IA_n$.
\end{proposition}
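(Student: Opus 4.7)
The plan is straightforward verification: each generator in $S_{\IA}(n)$ is an automorphism of $F_n$ with very restricted support (the conjugation move $\Con{x_i}{x_j}$ modifies only $x_i$, and the commutator transvection $\Mulcomm{x_i^\alpha}{x_j^\beta}{x_k^\gamma}$ modifies only $x_i^\alpha$, hence modifies $x_i$ in a predictable way). Consequently, for any relation $r \in R_{\IA}^0(n)$, the union of the supports of the factors is a finite set of generators $\{x_{\ell_1},\ldots,x_{\ell_m}\}$ with $m$ small, and verifying that $r = 1$ in $\IA_n$ reduces to the finite check that $r(x_{\ell_i}) = x_{\ell_i}$ in $F_n$ for each such $\ell_i$.

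First I would dispose of the ``commuting'' relations H1, H2, and H3 in Table \ref{table:commutatorrelatorssmall}. Under the stated index conditions, the two automorphisms in the commutator have the property that each one fixes the unique modified generator of the other, and also fixes the letters appearing in the target of the other. A one-line computation on each generator in the union of their supports then yields commutativity. The mild subtlety (e.g.\ $b = d$ in H1, $\{b,c\}\cap\{e,f\} \neq \varnothing$ in H2, $b \in \{d,e\}$ in H3) is that overlaps are allowed only among the \emph{passive} letters---those appearing on the right-hand sides of the defining formulas but not as the modified generator. In those cases the hypothesis $a \notin \{e,f\}$, $d \notin \{b,c\}$, etc., is exactly what guarantees that each automorphism leaves the modifying word of the other invariant, so commutativity still holds on the nose.

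Next I would treat the longer ``Jacobi-type'' relations H4--H9 one at a time, by evaluating each side on every generator in the support. For instance, for H4 I would apply the four factors in order to $x_a$, $x_b$, $x_c$ and verify in each case that the resulting word in $F_n$ equals the starting letter; H5 similarly requires a direct check on $x_a$, $x_b$, $x_c$, $x_d$, where the two commutators conspire to cancel, much like the Hall--Witt identity. Relations H6 and H7 are three-fold commutator identities whose verification on each affected $x_\ell$ again reduces to a chain of substitutions; H8 is a ``distributivity'' between a commutator transvection and a product of three conjugations sharing a common target $x_d$; H9 combines a product-of-conjugations identity with a compensating commutator transvection.

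The only real obstacle is bookkeeping: each relation comes with several sign parameters $\alpha,\beta,\gamma,\ldots \in \{\pm 1\}$ and several index-collision cases, producing a combinatorial explosion of subcases. This is precisely where the authors' announced strategy of offloading the verification to the GAP file \texttt{h2ia.g} is most useful: one encodes each element of $S_{\IA}(n)$ as a word-substitution rule on $F_n$ and asks the machine to reduce each relation on each affected basis letter to the identity. For an expository proof I would present one representative sign-choice of each of H1--H9 in full (doing the computation on each generator in the support), point out that the remaining sign-choices and permissible index collisions are obtained by symbolic substitution and follow by the same computation, and refer to the GAP script for the exhaustive mechanical check.
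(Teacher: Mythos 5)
You have the right verification strategy -- each generator has small support, so a relation can be checked by evaluating both sides on the handful of basis elements of $F_n$ that any factor moves, and the only work is bookkeeping over signs and index collisions (which the paper also offloads to the GAP file). The paper even notes that for this proposition the hand verification is not particularly hard. However, you have applied this strategy to the wrong set of relations. Proposition~\ref{proposition:verifyrels} is about $R_{\IA}^0(n)$, which is the set R0--R9 from Table~\ref{table:basicrelsian}, not the set H1--H9 of commutator relators from Table~\ref{table:commutatorrelatorssmall}. These two tables are not interchangeable: R1, R2, R3 do coincide with H1, H2, H3, and R4 is close to H4, but R0 (the inversion rule $\Mulcomm{x_a^\alpha}{x_b^\beta}{x_c^\gamma}^{-1}=\Mulcomm{x_a^\alpha}{x_c^\gamma}{x_b^\beta}$), R5 ($\Con{x_a}{x_b}^\beta\Mulcomm{x_a^\alpha}{x_b^\beta}{x_c^\gamma}\Con{x_a}{x_b}^{-\beta}=\Mulcomm{x_a^\alpha}{x_c^\gamma}{x_b^{-\beta}}$), and R6 ($\Mulcomm{x_a^{\alpha}}{x_b^\beta}{x_c^\gamma}\Mulcomm{x_a^{-\alpha}}{x_b^\beta}{x_c^\gamma}=[\Con{x_a}{x_c}^{-\gamma},\Con{x_a}{x_b}^{-\beta}]$) are not commutator relators at all, and they have no counterpart in H1--H9. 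Your proof never checks them. Conversely, H8 and H9 are not elements of $R_{\IA}^0(n)$, so verifying them is not part of this proposition (they are used later, in \S\ref{section:h2gen}, where they must be \emph{derived} from R0--R9, not merely checked to hold).

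To fix the argument, keep the method (each relation is an equality of two words in $S_{\IA}(n)$, composed as automorphisms and evaluated on each $x_\ell$ appearing in the support; the result must be $x_\ell$ on the nose; the combinatorics of at most six distinct indices means the computation in $F_6$ or $F_8$ suffices for all $n$), but run it over R0--R9, handling R5--R9 as equalities $w_1=w_2$ rather than as ``$r=1$'' commutator relators, and do not conflate them with the homology generators of Table~\ref{table:commutatorrelatorssmall}.
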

The proof is computational and is postponed until \S \ref{section:somecomputations}.
We remark that unlike many of our computer calculations, it is not particularly difficult to verify by hand.

Since the relations are rather
complicated we suggest to the reader that they not pay too close attention to them
on their first pass through the paper.  
The overall structure of our proof (and, in fact, the majority of its details) can be understood without much knowledge of our relations.

\begin{remark}
The relations in $R_{\IA}^0(n)$ have reasonable intuitive interpretations.
R1 through R3 state that generators acting only in different places commute with each other.
R4 is a generalization of the fact that for $n=3$, the conjugation move $\Con{x_3}{x_1}$ conjugates 
the inner automorphism $\Con{x_1}{x_2}\Con{x_3}{x_2}$ back to itself (since it fixes the conjugating element $x_2$).
R5 makes sense by looking at either side of $x_a$: on the right of $x_a^{\alpha}$, instances of 
$x_b^{\pm\beta}$ cancel, but on the left side of $x_a^{\alpha}$, we get a conjugate of a 
basic commutator that is itself a basic commutator.
R6 states that conjugation by a commutator is the same as acting by a commutator of conjugation moves.
R7, R8 and R9 allow us to rewrite a conjugate of a generator acting on a given element as a product of 
generators acting only on that same element ($x_a$, $x_d$ or $x_c$ as stated here, respectively).
In this sense, these relations are like the Steinberg relations from the presentation of $GL_n(\Z)$ in algebraic K-theory.
\end{remark}

\begin{table}[t!]
\begin{tabular}{p{0.95\textwidth}}
\toprule
\begin{center}
\textbf{Basic relations for $\IA_n$}
\end{center}
Distinct letters are assumed to represent distinct indices unless stated otherwise.
Let $R_{\IA}(n)$ denote the finite set of all relations from the following ten classes.
\vspace*{-0ex}\begin{enumerate}\setlength{\itemsep}{0ex}
\item[R0.]
\(\Mulcomm{x_a^\alpha}{x_b^\beta}{x_c^\gamma}^{-1}=\Mulcomm{x_a^\alpha}{x_c^\gamma}{x_b^\beta}\).
\item[R1.]
\( [\Con{x_a}{x_b},\Con{x_c}{x_d}]=1\),
possibly with $b=d$.
\item[R2.]
\( [\Mulcomm{x_a^\alpha}{x_b^\beta}{x_c^\gamma},\Mulcomm{x_d^\delta}{x_e^\epsilon}{x_f^\zeta}]=1\),
possibly with $\{b,c\}\cap\{e,f\}\neq \varnothing$ or with $x_a^{\alpha}=x_d^{-\delta}$,
as long as $x_a^\alpha\neq x_d^\delta$, $a\notin\{e,f\}$ and $d\notin \{b,c\}$.
\item[R3.]
\( [\Con{x_a}{x_b},\Mulcomm{x_c^\gamma}{x_d^\delta}{x_e^\epsilon}]=1\),
possibly with $b\in\{d,e\}$, if $c\notin\{a,b\}$ and $a\notin\{c,d,e\}$.
\item[R4.]
\([\Con{x_a}{x_b}\Con{x_c}{x_b},\Con{x_c}{x_a}]=1\).
\item[R5.]
\(\Con{x_a}{x_b}^\beta\Mulcomm{x_a^\alpha}{x_b^\beta}{x_c^\gamma}\Con{x_a}{x_b}^{-\beta}=\Mulcomm{x_a^\alpha}{x_c^\gamma}{x_b^{-\beta}}\).
\item[R6.]
\(\Mulcomm{x_a^{\alpha}}{x_b^\beta}{x_c^\gamma}\Mulcomm{x_a^{-\alpha}}{x_b^\beta}{x_c^\gamma}=[\Con{x_a}{x_c}^{-\gamma},\Con{x_a}{x_b}^{-\beta}]\).
\item[R7.]
\([\Con{x_a}{x_b}^{-\beta},\Mulcomm{x_b^\beta}{x_c^\gamma}{x_d^\delta}]=[\Con{x_a}{x_d}^{-\delta},\Con{x_a}{x_c}^{-\gamma}]\).
\item[R8.]
\(
\Mulcomm{x_a^\alpha}{x_b^\beta}{x_c^\gamma}\Mulcomm{x_d^\delta}{x_a^\alpha}{x_e^\epsilon}\Mulcomm{x_a^\alpha}{x_c^\gamma}{x_b^\beta}
=
\Con{x_d}{x_e}^{-\epsilon}\Mulcomm{x_d^\delta}{x_c^\gamma}{x_b^\beta}\Con{x_d^\delta}{x_e}^\epsilon\Mulcomm{x_d^\delta}{x_a^\alpha}{x_e^\epsilon}\Mulcomm{x_d^\delta}{x_b^\beta}{x_c^\gamma},\hfill
\)
possibly with $b=e$ or $c=e$.
\item[R9.]
\(
\Con{x_a}{x_b}^\beta
\Mulcomm{x_c^\gamma}{x_a^\alpha}{x_d^\delta}\Con{x_a}{x_b}^{-^\beta}
=\Con{x_c}{x_d}^{-\delta}\Mulcomm{x_c^\gamma}{x_a^\alpha}{x_b^\beta}\Con{x_c}{x_d}^\delta\Mulcomm{x_c^\gamma}{x_a^\alpha}{x_d^\delta}\Mulcomm{x_c^\gamma}{x_b^\beta}{x_a^\alpha},
\)
possibly with $b=d$.
\end{enumerate}\\
\bottomrule
\end{tabular}
\caption{Relations for the L-presentation of $\IA_n$.}
\label{table:basicrelsian}
\end{table}

\paragraph{Generators for automorphism group of free group.}
Before discussing our endomorphisms $E_{\IA}(n)$, we first introduce 
a generating set for $\Aut(F_n)$ that goes back to work of Nielsen.
For $\alpha = \pm 1$ and distinct $1 \leq i,j \leq n$, let $\Mul{x_i^{\alpha}}{x_j} \in \Aut(F_n)$
be the {\em transvection} that takes $x_i^{\alpha}$ to $x_j x_i^{\alpha}$ and fixes
$x_{\ell}$ for $\ell \neq i$.  Just like before, we have
\[\Mul{x_i^{-1}}{x_j}(x_i^{-1}) = x_j x_i^{-1} \quad \text{and} \quad \Mul{x_i^{-1}}{x_j}(x_i) = x_i x_j^{-1}.\]
Next, for distinct $1 \leq i,j \leq n$ let $P_{i,j} \in \Aut(F_n)$ be the {\em swap automorphism}
that exchanges $x_i$ and $x_j$ while fixing $x_{\ell}$ for $\ell \neq i,j$.  Finally, for $1 \leq i \leq n$
let $I_i \in \Aut(F_n)$ be the {\em inversion automorphism} that takes $x_i$ to $x_i^{-1}$ and fixes
$x_{\ell}$ for $\ell \neq i$.  Define
\begin{align*}
S_{\Aut}(n) = &\Set{$\Mul{x_i^{\alpha}}{x_j}^{\beta}$}{$1 \leq i,j \leq n$ distinct, $\alpha,\beta \in \{\pm 1\}$}\\
&\quad\quad \cup
\Set{$P_{i,j}$}{$1 \leq i,j \leq n$ distinct} \cup
\Set{$I_i$}{$1 \leq i \leq n$}.
\end{align*}
Observe that the set $S_{\Aut}(n) \subset \Aut(F_n)$ is closed under inversion.

\paragraph{Endomorphisms.}
Below we will define a function
$\ttheta \colon S_{\Aut}(n) \rightarrow \End(F(S_{\IA}(n)))$ with the following key
property.  Let $\pi \colon F(S_{\IA}(n)) \rightarrow \IA_n$ and
$\rho \colon F(S_{\Aut}(n)) \rightarrow \Aut(F_n)$ be the projections.  Then
for $s \in S_{\Aut}(n)$ and $w \in F(S_{\IA}(n))$, we have
\begin{equation}
\label{eqn:thetakey}
\pi(\ttheta(s)(w)) = \rho(s) \pi(w) \rho(s)^{-1} \in \IA_n.
\end{equation}
Our set of endomorphisms will then be
\[E_{\IA}(n) = \Set{$\ttheta(s)$}{$s \in S_{\Aut}(n)$}.\]
The relevance of the formula \eqref{eqn:thetakey} is that we want the induced
endomorphisms of our IA-presentation of $\IA_n$ to generate the image of
$\Aut(F_n)$ in $\Aut(\IA_n) \subset \End(\IA_n)$ arising from the conjugation action
of $\Aut(F_n)$ on $\IA_n$.


\paragraph{Defining $\ttheta$.}
Consider $s \in S_{\Aut}(n)$.  To define an endomorphism $\ttheta(s) \colon F(S_{\IA}(n)) \rightarrow F(S_{\IA}(n))$,
it is enough to say what $\ttheta(s)$ does to each element of $S_{\IA}(n)$.  There are two cases.
\begin{compactitem}
\item $s = P_{i,j}$ or $s = I_i$.  We then define $\ttheta(s)$ using
the action of $s$ on $F_n$ via formulas
\[\ttheta(s)(\Con{x_a}{x_b}) = \Con{s(x_a)}{s(x_b)} \quad \text{and} \quad \ttheta(s)(\Mulcomm{x_a^{\alpha}}{x_b^{\beta}}{x_c^{\gamma}}) = \Mulcomm{s(x_a^{\alpha})}{s(x_b^{\beta})}{s(x_c^{\gamma})}.\]
Here one should interpret $\Con{x_e^{-1}}{x_f}$ as $\Con{x_e}{x_f}$,
$\Con{x_e}{x_f^{-1}}$ as $\Con{x_e}{x_f}^{-1}$,
and $\Con{x_e^{-1}}{x_f^{-1}}$ as $\Con{x_e}{x_f}^{-1}$.
\item $s = \Mul{x_a^{\alpha}}{x_b}$.  
In this case, we define $\ttheta(s)$ via the formulas in Table \ref{table:thetadef}.  
These list the cases where $\ttheta(s)$ does not fix a generator, except that to avoid redundancy, we do not always list both a commutator transvection and its inverse.
Specifically, if $t=\Mulcomm{x_c^\gamma}{x_d^\delta}{x_e^\epsilon}$, possibly with $\{c,d,e\}\cap\{a,b\}\neq\varnothing$, and a formula is listed for $t'=\Mulcomm{x_c^\gamma}{x_e^\epsilon}{x_d^\delta}$ but not for $t$, then we define
\[\ttheta(s)(t)=\ttheta(s)(t')^{-1}.\]
If Table~\ref{table:thetadef} lists no entry for $t$ or $t'$, or the table lists no entry for $t$ and $t$ is a conjugation move, then we define $\ttheta(s)(t)=t$.
\end{compactitem}
These formulas were chosen to be as simple as possible, among formulas realizing equation~\eqref{eqn:thetakey}.
Just like for the relations, we recommend not dwelling on these formulas during one's first read through this paper.

\begin{proposition}\label{proposition:verifyendomorphisms}
The definition of $\ttheta$ satisfies equation~\eqref{eqn:thetakey}. 
\end{proposition}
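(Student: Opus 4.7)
The plan is to reduce Proposition~\ref{proposition:verifyendomorphisms} to verifying equation~\eqref{eqn:thetakey} only when $w$ ranges over the finite set $S_{\IA}(n)$. Both sides of \eqref{eqn:thetakey} are homomorphisms $F(S_{\IA}(n)) \to \IA_n$ in the variable $w$: the left side is the composition $\pi \circ \ttheta(s)$, while the right side is the composition of $\pi$ with conjugation by $\rho(s)$ in $\IA_n$. Hence it suffices to verify \eqref{eqn:thetakey} for $w = \Con{x_i}{x_j}$ and $w = \Mulcomm{x_i^\alpha}{x_j^\beta}{x_k^\gamma}$. For each such choice, both sides are specific automorphisms of $F_n$, and we can check the identity by evaluating both sides on each basis element $x_\ell$.

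For $s = P_{i,j}$ or $s = I_i$, the automorphism $s$ merely permutes or inverts basis elements. Any $f \in \IA_n$ determined by the formula $f(x_a) = u_a$ and $f(x_\ell) = x_\ell$ for $\ell \ne a$ conjugates to $s f s^{-1}$, which fixes $x_\ell$ for $\ell \ne s(x_a)$ and sends $s(x_a)$ to $s(u_a)$. Applied to a conjugation move or commutator transvection, this yields exactly the relabeling formula defining $\ttheta(s)$ in this case, once one accounts for the bookkeeping conventions $\Con{x_e^{-1}}{x_f} \mapsto \Con{x_e}{x_f}$, etc., which ensure the output lies in $S_{\IA}(n)$.

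For $s = \Mul{x_a^\alpha}{x_b}$, the verification splits into many subcases organized by how $\{a,b\}$ meets the indices occurring in $w$. In each subcase we compute $\rho(s) \pi(w) \rho(s)^{-1}(x_\ell) = s\bigl(\pi(w)(s^{-1}(x_\ell))\bigr)$ directly from the Nielsen formula for $s$ and the generator formula for $w$; this vanishes for all but a handful of $\ell$, and when nontrivial it typically produces a short commutator expression in the $x_i$. We then match the resulting automorphism against $\pi(\ttheta(s)(w))$, where $\ttheta(s)(w)$ is read off from Table~\ref{table:thetadef} and expanded as a product of basic $\Con{}{}$ and $\Mulcomm{}{}{}$ automorphisms evaluated on each $x_\ell$. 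Table~\ref{table:thetadef} was designed precisely so that these two automorphisms agree.

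The main obstacle is not any single subcase but rather the sheer volume of subcases in the transvection case and the combinatorial task of recognizing each $\rho(s) \pi(w) \rho(s)^{-1}$ as a concrete product in $S_{\IA}(n)$. Each subcase is conceptually routine: one unpacks a short composition in $\Aut(F_n)$ on each basis element and identifies the output as a product of basic Torelli generators. Accordingly, we would organize the verification by first enumerating the overlap patterns of indices, and then mechanically check each row of Table~\ref{table:thetadef}; this is exactly the bookkeeping that the GAP code \verb+h2ia.g+ announced in the introduction is designed to automate, reducing the proof to a mechanical finite verification.
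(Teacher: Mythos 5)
Your proposal is correct and matches the paper's proof in substance: the paper's argument is precisely a mechanical verification (via the GAP list \verb+thetavsconjaut+) that $\ttheta(s)(t)$ and $\rho(s)\pi(t)\rho(s)^{-1}$ agree as automorphisms of $F_n$ when evaluated on basis elements, for every configuration of generators $s \in S_{\Aut}(n)$ and $t \in S_{\IA}(n)$. The only difference is that you make explicit the (easy) reduction to generators $w \in S_{\IA}(n)$, which the paper leaves implicit.
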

This proof uses a computer verification and is postponed until \S \ref{section:somecomputations}.
Propositions~\ref{proposition:verifyrels} and~\ref{proposition:verifyendomorphisms} together imply that all of the extended relations from our L-presentation are trivial in $\IA_n$.
This means that the obvious map on generators (sending each generator to the automorphism it names) extends to a well defined homomorphism 
\[\LPres{S_{\IA}(n)}{R_{\IA}^0(n)}{E_{\IA}(n)} \to \IA_n.\]

\begin{table}[t!]
\begin{tabular}{l@{\hspace{28pt}}l}
\toprule
$s\in S_I$ & $\ttheta(\Mul{x_a^\alpha}{x_b}^{\beta})(s)$ \\
\midrule
$\Con{x_c}{x_a}$ & $(\Con{x_c}{x_a}^\alpha\Con{x_c}{x_b}^\beta)^\alpha$ \\
$\Con{x_a}{x_c}$ & $\Con{x_a}{x_c}\Mulcomm{x_a^\alpha}{x_b^{-\beta}}{x_c}$ \\
$\Con{x_b}{x_c}$ & $\Con{x_b}{x_c}\Mulcomm{x_a^\alpha}{x_b^{-\beta}}{x_c^{-1}}$\\
$\Con{x_b}{x_a}$ & $(\Con{x_a}{x_b}^\beta\Con{x_b}{x_a}^\alpha)^\alpha$ \\
$\Mulcomm{x_a^\alpha}{x_c^\gamma}{x_d^\delta}$ & $\Con{x_a}{x_b}^\beta\Mulcomm{x_a^\alpha}{x_c^\gamma}{x_d^\delta}\Con{x_a}{x_b}^{-\beta}$ \\
$\Mulcomm{x_c^\gamma}{x_a^\alpha}{x_d^\delta}$ & $\Mulcomm{x_c^\gamma}{x_b^\beta}{x_d^\delta}\Con{x_c}{x_b}^{-\beta}\Mulcomm{x_c^\gamma}{x_a^\alpha}{x_d^\delta}\Con{x_c}{x_b}^{\beta}$ \\
$\Mulcomm{x_c^\gamma}{x_a^{-\alpha}}{x_d^\delta}$ & $\Mulcomm{x_c^\gamma}{x_a^{-\alpha}}{x_d^\delta}\Con{x_c}{x_a}^\alpha\Mulcomm{x_c^\gamma}{x_b^{-\beta}}{x_d^\delta}\Con{x_c}{x_a}^{-\alpha}$ \\
$\Mulcomm{x_b^\beta}{x_c^\gamma}{x_d^\delta}$ & $\Con{x_a}{x_b}^\beta\Mulcomm{x_a^{-\alpha}}{x_c^\gamma}{x_d^\delta}\Mulcomm{x_b^\beta}{x_c^\gamma}{x_d^\delta}\Con{x_a}{x_b}^{-\beta}$ \\
$\Mulcomm{x_b^{-\beta}}{x_c^\gamma}{x_d^\delta}$ & $\Mulcomm{x_a^{\alpha}}{x_c^\gamma}{x_d^\delta}\Mulcomm{x_b^{-\beta}}{x_c^\gamma}{x_d^\delta}$ \\
$\Mulcomm{x_a^\alpha}{x_b^{\beta}}{x_c^\gamma}$ & $\Con{x_a}{x_b}^\beta\Mulcomm{x_a^\alpha}{x_b^{\beta}}{x_c^\gamma}\Con{x_a}{x_b}^{-\beta}$ \\
$\Mulcomm{x_a^\alpha}{x_b^{-\beta}}{x_c^\gamma}$ & $\Con{x_a}{x_b}^\beta\Mulcomm{x_a^\alpha}{x_b^{-\beta}}{x_c^\gamma}\Con{x_a}{x_b}^{-\beta}$ \\
$\Mulcomm{x_b^{\beta}}{x_a^\alpha}{x_c^\gamma}$ & $\Con{x_a}{x_c}^\gamma\Mulcomm{x_a^\alpha}{x_b^{-\beta}}{x_c^\gamma}\Con{x_b}{x_c}^{-\gamma}\Mulcomm{x_b^{-\beta}}{x_a^\alpha}{x_c^{-\gamma}}$ \\
$\Mulcomm{x_b^{\beta}}{x_a^{-\alpha}}{x_c^\gamma}$ & $\Con{x_c}{x_b}^{-\beta}\Con{x_c}{x_a}^{-\alpha}\Mulcomm{x_b^{-\beta}}{x_c^{-\gamma}}{x_a^\alpha}\Con{x_b}{x_c}^\gamma\Mulcomm{x_a^\alpha}{x_c^\gamma}{x_b^{-\beta}}\Con{x_a}{x_c}^{-\gamma}\Con{x_c}{x_a}^\alpha\Con{x_c}{x_b}^\beta$ \\
$\Mulcomm{x_b^{-\beta}}{x_a^{\alpha}}{x_c^\gamma}$ & $\Con{x_a}{x_c}^{-\gamma}\Con{x_c}{x_a}^\alpha\Mulcomm{x_b^{-\beta}}{x_c^\gamma}{x_a^{-\alpha}}\Con{x_c^\gamma}{x_b}^\beta\Mulcomm{x_a^\alpha}{x_c^\gamma}{x_b^{-\beta}}\Con{x_b}{x_c}^\gamma\Con{x_c}{x_b}^{-\beta}\Con{x_c}{x_a}^{-\alpha}$ \\
$\Mulcomm{x_b^{-\beta}}{x_a^{-\alpha}}{x_c^\gamma}$ & $\Con{x_b}{x_c}^{-\gamma}\Mulcomm{x_a^\alpha}{x_b^{-\beta}}{x_c^\gamma}\Con{x_c}{x_b}^{-\beta}\Mulcomm{x_b^{-\beta}}{x_a^{-\alpha}}{x_c^\gamma}\Con{x_c}{x_a}^{-\alpha}\Con{x_a}{x_c}^\gamma\Con{x_c}{x_a}^\alpha\Con{x_c}{x_b}^\beta$ \\
$\Mulcomm{x_c^\gamma}{x_a^{\alpha}}{x_b^{\beta}}$ & $\Con{x_a}{x_b}^\beta\Mulcomm{x_c^\gamma}{x_a^\alpha}{x_b^\beta}\Con{x_a}{x_b}^{-\beta}$\\
$\Mulcomm{x_c^\gamma}{x_a^{\alpha}}{x_b^{-\beta}}$ & $\Mulcomm{x_c^\gamma}{x_b^\beta}{x_a^\alpha}$\\
\bottomrule
\end{tabular}
\caption{Definition of $\ttheta(\Mul{x_a^{\alpha}}{x_b}^{\beta})$ on the generators $S_{\IA}(n)$.
All indices in each entry are assumed to be distinct.
If no entry is listed for $t\in S_{\IA}(n)$ or for the generator representing $t^{-1}$ (as in relation R0) then $\ttheta(\Mul{x_a^{\alpha}}{x_b}^{\beta})(t)=t$.
}
\label{table:thetadef}
\end{table}

\section{Tools for the proof}
\label{section:prooftools}

In this section, we assemble the tools we will need to prove Theorem \ref{maintheorem:finitelpres}.  
In \S \ref{section:presentation}, we discuss a theorem of the second author that gives a sort of
infinite presentation for a group acting on a simplicial complex.  
In \S \ref{section:partialbases}, we introduce the complex of partial bases.  
In \S \ref{section:simplexstabilizers}, we give generators for the $\IA_n$-stabilizers
of simplices in the complex of partial bases.
In \S \ref{section:autaction}, we introduce
an action of $\Aut(F_n)$ on the group given by our purported L-presentation for $\IA_n$.
Finally, in \S \ref{section:Lpreshom} we introduce a certain morphism between groups
given by L-presentations.

Two results in this sections have computer-aided proofs which are postponed until \S \ref{section:somecomputations}: 
Proposition \ref{proposition:autaction} from \S \ref{section:autaction} and Proposition \ref{proposition:mapfromkernel}
from \S \ref{section:Lpreshom}.

\subsection{Presentations from group actions}
\label{section:presentation}

Consider a group $G$ acting on a simplicial complex $X$.  We say that $G$ acts
{\em without rotations} if for all simplices $\sigma$ of $X$, 
the setwise and pointwise stabilizers of $\sigma$ coincide.  For a simplex
$\sigma$, denote by $G_{\sigma}$ the stabilizer of $\sigma$.  Letting
$X^{(0)}$ denote the vertex set of $X$, there is a homomorphism from the free product of vertex stabilizers
\[\psi \colon \BigFreeProd_{v \in X^{(0)}} G_v \longrightarrow G.\]
As notation, if $g \in G$ stabilizes a vertex $v$ of $X$, then
denote by $g_v$ the associated element of
\[G_v < \BigFreeProd_{v \in X^{(0)}} G_v.\]
The map $\psi$ is rarely injective.  Two families
of elements in its kernel are as follows.
\begin{compactitem}
\item If $e$ is an edge of $X$ joining vertices $v$ and $v'$ and
if $g \in G_e$, then $g_v g_{v'}^{-1} \in \Ker(\psi)$.  We call
these the {\em edge relators}.
\item If $v,w \in X^{(0)}$ and $g \in G_v$ and
$h \in G_w$, then $h_w g_v h_w^{-1} (h g h^{-1})_{h(v)}^{-1} \in \Ker(\psi)$.
We call these the {\em conjugation relators}.
\end{compactitem}
The second author gave hypotheses under which these generate $\Ker(\psi)$.

\begin{theorem}[{\cite{PutmanPresentation}}]
\label{theorem:presentation}
Consider a group $G$ acting without rotations on a $1$-connected simplicial complex $X$.  Assume
that $X/G$ is $2$-connected.  Then the kernel of the map $\psi$ described above
is normally generated by the edge and conjugation relators.
\end{theorem}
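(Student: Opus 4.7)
The containment of edge and conjugation relators in $\Ker(\psi)$ is clear by direct computation, so the substantive direction is to show that every element of $\Ker(\psi)$ is a product of conjugates of these relators. The plan is to set $N \lhd \BigFreeProd_v G_v$ to be the normal closure of the edge and conjugation relators, form the quotient $H = (\BigFreeProd_v G_v)/N$, and prove that the induced surjection $\bar\psi \colon H \to G$ is injective.

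First I would verify surjectivity of $\psi$: given $g \in G$ and a basepoint vertex $v_0$, I use the connectedness of $X$ (which follows from $2$-connectedness of $X/G$) to find an edge path from $v_0$ to $gv_0$; the edge relators then let me decompose $g$ as a telescoping product of elements of vertex stabilizers along this path. More generally, I claim that any word in $\BigFreeProd_v G_v$ can, modulo the conjugation relators, be represented by a combinatorial closed loop in $X$ based at $v_0$ together with a sequence of stabilizer elements labeling its vertices. The idea is that the conjugation relators let me ``parallel transport'' a stabilizer element $g_w$ back to a stabilizer element at $v_0$ along any chosen path from $w$ to $v_0$, so any word in the free product assembles into such a labeled loop.

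The heart of the proof is injectivity of $\bar\psi$. Take $w \in \Ker(\psi)$ and represent it, modulo $N$, by a loop $\gamma$ in $X$ whose vertex labels multiply to $1 \in G$. Since $X$ is $1$-connected, $\gamma$ bounds a triangulated disk $D$ in $X$. Walking around the boundary of each $2$-simplex of $D$ gives a ``triangle identity'' among stabilizer elements, and I would argue that each such identity is a consequence of a bounded number of edge and conjugation relators: the conjugation relators move stabilizer elements between adjacent free product factors, and the edge relators identify stabilizer elements shared by adjacent vertices. Aggregating over all triangles of $D$, I express $w$ as a product of conjugates of elements of $N$, hence $w \in N$.

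The main obstacle is the bookkeeping in the last step, a van Kampen style argument turning the filling disk into an explicit algebraic identity. The $2$-connectedness of $X/G$ enters precisely here: it guarantees that the disk filling of $\gamma$ can be chosen compatibly with the $G$-action, so that the triangle identities are fed to me one $G$-orbit at a time and no additional relations beyond edge and conjugation relators are needed to reconcile lifts. The ``without rotations'' hypothesis is also essential: it ensures that any simplex stabilizer fixes each of its vertices, so stabilizer elements assigned to adjacent vertices of a triangle transfer cleanly between the corresponding factors of the free product without introducing extraneous permutation data.
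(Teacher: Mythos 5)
This theorem is not proved in the paper under review; it is quoted verbatim from the second author's earlier paper \cite{PutmanPresentation} and used as a black box, so there is no in-paper argument to compare your sketch against. Evaluated on its own terms, your outline has the right general flavor (replace $G$ by $H = (\BigFreeProd_v G_v)/N$, show $\bar\psi \colon H \to G$ is an isomorphism), but the two steps you yourself flag as ``bookkeeping'' are in fact where the entire content of the theorem lives, and your discussion of them is not yet a proof.

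The most serious gap is the treatment of the hypothesis that $X/G$ is $2$-connected. As written, your argument uses $1$-connectedness of $X$ to fill the loop $\gamma$ by a disk $D$ and then asserts that walking around each $2$-simplex of $D$ yields an identity that ``is a consequence of a bounded number of edge and conjugation relators.'' If that assertion held in general, your argument would prove the theorem with no hypothesis on $X/G$ at all, and that cannot be right: the hypothesis is there precisely to rule out the situation where a genuine $2$-cell relator is needed that is not a consequence of edge and conjugation relators. Your explanation of what $2$-connectedness of $X/G$ buys you --- ``the disk filling of $\gamma$ can be chosen compatibly with the $G$-action, so that the triangle identities are fed to me one $G$-orbit at a time'' --- does not parse into a concrete mathematical statement: a filling disk for $\gamma$ lives in $X$ and is found using $\pi_1(X)=1$; there is no notion of choosing it ``compatibly with the $G$-action,'' and it is not clear what it would mean for the triangles to appear ``one $G$-orbit at a time.'' The difficulty one actually faces is that a word $w = h_1 \cdots h_m \in \Ker\psi$, with $h_i \in G_{u_i}$, does not naturally trace a loop in $X$ unless one first makes choices (paths connecting the $u_i$, moves via conjugation relators), and different choices can produce fillings related in ways that a priori require new relations; the $2$-connectedness of $X/G$ is exactly what controls this ambiguity. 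Pinning this down requires either a careful spectral-sequence/semisimplicial argument or an explicit combinatorial analysis of the moves, which is what \cite{PutmanPresentation} does and what your sketch currently lacks. A smaller point: you remark that connectedness of $X$ follows from $2$-connectedness of $X/G$; this implication is not true in general, but it is also unnecessary since $X$ is assumed $1$-connected (hence connected) in the hypotheses.
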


\subsection{The complex of partial bases}
\label{section:partialbases}

We now introduce the simplicial complex to which we will apply Theorem \ref{theorem:presentation}.  
For $z \in F_n$, let $\Conj{z}$ denote the union of the conjugacy classes of $z$ and $z^{-1}$.  

\begin{definition}
A {\em partial basis} for $F_n$ is a set $\{z_1,\ldots,z_k\} \subset F_n$ such that there
exist $z_{k+1},\ldots,z_n \in F_n$ with $\{z_1,\ldots,z_n\}$ a free basis for $F_n$.
The {\em complex of partial bases for $F_n$}, denoted $\Bases_n$, is the simplicial complex whose $(k-1)$-simplices
are sets $\{\Conj{z_1},\ldots,\Conj{z_k}\}$, where $\{z_1,\ldots,z_k\}$ is a partial basis
for $F_n$.
\end{definition}

The group $\Aut(F_n)$ acts on $\Bases_n$, and we wish to apply Theorem \ref{theorem:presentation}
to the restriction of this action to $\IA_n$.  It is clear that $\IA_n$ acts on $\Bases_n$ without
rotations, so we must check that $\Bases_n$ is $1$-connected and that $\Bases_n / \IA_n$ is $2$-connected.

We start by verifying that $\Bases_n$ is $1$-connected.

\begin{proposition}
\label{proposition:basescon}
The simplicial complex $\Bases_n$ is $1$-connected for $n \geq 3$.  
\end{proposition}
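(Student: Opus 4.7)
The plan is to prove the stronger claim that $\Bases_n$ is $(n-2)$-connected for $n \ge 1$ by induction on $n$, which specializes to the asserted $1$-connectivity when $n \ge 3$. This is a non-abelian analogue of Maazen's classical theorem that the complex of unimodular partial bases of $\Z^n$ is $(n-2)$-connected, and the proof is expected to mirror that one in structure, with the added wrinkle that vertices are conjugacy-class pairs rather than elements.

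For the base case $n=3$, only connectedness is required. I would fix the basepoint $\Conj{x_1}$ and show every vertex lies in its component. Any primitive $w \in F_3$ extends to a basis $\{w,w',w''\}$, giving edges from $\Conj{w}$ to $\Conj{w'}$ and $\Conj{w''}$, so it suffices to check that, for some basis adjacent to $\Conj{w}$, one of its vertices is connected to $\Conj{x_1}$. Since $\Aut(F_3)$ acts transitively on ordered bases, this reduces to checking that $\Conj{x_1}$ and $\Conj{s(x_1)}$ lie in a common component for each Nielsen generator $s \in S_{\Aut}(3)$. This is a short case-by-case check from the definitions of $\Mul{x_i^\alpha}{x_j}^\beta$, $P_{i,j}$, and $I_i$; for example, $\Mul{x_1^\alpha}{x_j}$ sends $x_1$ to $x_j x_1^{\pm 1}$, which extends to a basis with $x_j$, and $x_j$ in turn extends to a basis with $x_1$.

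For the inductive step, given a simplicial sphere $f\colon S^k\to\Bases_n$ with $k\le n-2$, I would apply a standard ``bad simplex'' engulfing argument. The key geometric input is that the link of the vertex $\Conj{x_1}$ in $\Bases_n$ is homotopy equivalent to the complex $\Bases_{n-1}$ associated to the subgroup $\langle x_2,\ldots,x_n\rangle$. Namely, given any class $\Conj{w}$ with $\{x_1,w\}$ a partial basis, a product of moves fixing $x_1$ (combinations of $\Con{x_i}{x_1}$ and automorphisms preserving $\langle x_2,\ldots,x_n\rangle$) normalizes $w$ into $\langle x_2,\ldots,x_n\rangle$ without changing the vertex $\Conj{w}$ of $\Bases_n$. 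Granting this identification, the link is $(n-3)$-connected by induction, and a standard engulfing argument then pushes $f$ off $\Conj{x_1}$ and iteratively contracts the sphere.

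The main obstacle is the link computation. Partial bases of $F_n$ are substantially more flexible than partial bases of $\Z^n$, because conjugation and twisting let a primitive element look very different in different bases, so normalizing a representative of $\Conj{w}$ into the complement subgroup requires a Whitehead-type argument about the form of primitive elements in a free group, together with a careful accounting of which automorphisms actually preserve $\Conj{x_1}$. Once the link is correctly identified with $\Bases_{n-1}$, the remainder of the inductive argument is formal.
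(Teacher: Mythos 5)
Your proposal attempts a Maazen-style induction, but the crucial link computation you flag as "the main obstacle" is in fact not merely an obstacle — it fails. The link of $\Conj{x_1}$ in $\Bases_n$ is \emph{not} homotopy equivalent to $\Bases_{n-1}$ in any straightforward way. The set of vertices $\Conj{w}$ with $\{x_1,w\}$ a partial basis is enormous (e.g.\ $\Conj{x_2}$, $\Conj{x_1 x_2}$, $\Conj{x_1^2 x_2}$, $\ldots$ are all distinct link vertices), and there is no well-defined retraction of the link onto the subcomplex spanned by conjugacy classes of primitives in $\langle x_2,\ldots,x_n\rangle$. The normalizing automorphisms you mention (products of $\Con{x_i}{x_1}$, etc.) depend on the particular $w$ being normalized; they cannot be applied coherently across a simplex of the link, so they do not define a simplicial (or even continuous) map, and a fortiori not a deformation retraction. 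This is exactly why the connectivity of the complex of partial bases for $F_n$ is so much harder than Maazen's theorem for $\Z^n$: unlike lattices, free groups lack the rigidity that makes the link recursion work, and the full $(n-2)$-connectivity of this complex (which you propose to prove) is a substantially harder statement than what is needed here.

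The paper's actual proof is much shorter and completely sidesteps any link analysis. It introduces the auxiliary complex $\Bases_n'$ whose vertices are single conjugacy classes $\Conj{z}'$ (rather than pairs $\{\Conj{z}',\Conj{z^{-1}}'\}$), cites the authors' prior result \cite{DayPutmanComplex} that $\Bases_n'$ is $1$-connected for $n\geq 3$, and observes that the forgetful map $\rho\colon \Bases_n'\to\Bases_n$ admits a simplicial section, hence is surjective on all homotopy groups. This reduction is the whole proof. If you want to pursue a self-contained argument, the place to start is the proof of $1$-connectivity of $\Bases_n'$ in \cite{DayPutmanComplex}, which itself does not proceed by a naive Maazen induction.
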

\begin{proof}
For $z \in F_n$, let $\Conj{z}'$ be the conjugacy class of $z$.  Define $\Bases_n'$ to be the simplicial
complex whose $(k-1)$-simplices
are sets $\{\Conj{z_1}',\ldots,\Conj{z_k}'\}$, where $\{z_1,\ldots,z_k\}$ is a partial basis for $F_n$.
In \cite{DayPutmanComplex}, the authors proved that $\Bases_n'$ is $1$-connected for $n \geq 3$.  There is
a natural simplicial map $\rho\colon \Bases_n' \rightarrow \Bases_n$.  Letting 
$\psi^0\colon (\Bases_n)^{(0)} \rightarrow (\Bases_n')^{(0)}$ be an arbitrary map satisfying
$\rho \circ \psi^0 = \text{id}$, it is clear that $\psi^0$ extends to a simplicial map
$\psi\colon \Bases_n \rightarrow \Bases_n'$ satisfying $\rho \circ \psi = \text{id}$.  This implies
that $\rho$ induces a surjection on all homotopy groups, so $\Bases_n$ is 
$1$-connected for $n \geq 3$.
\end{proof}

It also follows from \cite{DayPutmanComplex} that $\Bases_n / \IA_n$ is $(n-2)$-connected.  In particular,
it is $2$-connected for $n \geq 4$, and thus satisfies the conditions of Theorem \ref{theorem:presentation}
for $n \geq 4$.  However, we will need a complex that satisfies the conditions of
Theorem \ref{theorem:presentation} for $n=3$ as well.  We therefore attach cells to increase the connectivity.

\begin{definition}
The {\em augmented complex of partial bases for $F_n$}, denoted $\ABases_n$, is the simplicial
complex whose $(k-1)$-simplices are as follows.
\begin{compactitem}
\item Sets of the form $\{\Conj{z_1},\ldots,\Conj{z_k}\}$, where $\{z_1,\ldots,z_k\}$ is a partial basis
for $F_n$.  These will be called the {\em standard simplices}.
\item Sets of the form $\{\Conj{z_1 z_2}, \Conj{z_1},\Conj{z_2},\ldots,\Conj{z_{k-1}}\}$,
where $\{z_1,\ldots,z_{k-1}\}$ is a partial basis for $F_n$.
These will be called the {\em additive simplices}.
\end{compactitem}
\end{definition}

\begin{remark}
Since $z_1 z_2$ and $z_2 z_1$ are conjugate, the two additive simplices
\[\{\Conj{z_1 z_2}, \Conj{z_1},\Conj{z_2},\ldots,\Conj{z_{k-1}}\} \quad \text{and} \quad
\{\Conj{z_2 z_1}, \Conj{z_1},\Conj{z_2},\ldots,\Conj{z_{k-1}}\}\] 
of $\ABases_n$ are the same.
\end{remark}

The group $\Aut(F_n)$ (and hence $\IA_n$) still acts on $\ABases_n$.  
Since $\ABases_n$ is obtained from $\Bases_n$ by adding simplices of dimension at least $2$, it
inherits the $1$-connectivity of $\Bases_n$ for $n \geq 3$ asserted in Proposition \ref{proposition:basescon}.

\begin{proposition}
\label{proposition:abasescon}
The complex $\ABases_n$ is $1$-connected for $n \geq 3$.
\end{proposition}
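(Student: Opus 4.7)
The plan is to reduce to the already-proven $1$-connectivity of $\Bases_n$ by showing that $\ABases_n$ is obtained from $\Bases_n$ by attaching cells of dimension at least $2$. The key point is that an additive $(k-1)$-simplex $\{\Conj{z_1 z_2},\Conj{z_1},\Conj{z_2},\ldots,\Conj{z_{k-1}}\}$ requires the partial basis $\{z_1,\ldots,z_{k-1}\}$ to actually contain the elements $z_1$ and $z_2$, forcing $k\geq 3$ and hence dimension at least $2$.

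To make the comparison precise, I will verify that $\Bases_n$ and $\ABases_n$ have the same $1$-skeleton. The vertex sets agree because, whenever $\{z_1,z_2\}$ is a partial basis, the element $z_1 z_2$ is primitive (the endomorphism $z_1\mapsto z_1 z_2$, $z_i\mapsto z_i$ for $i\neq 1$ is an automorphism of $F_n$, so it carries a basis containing $z_1,z_2$ to a basis containing $z_1 z_2$). To compare edges, I need to show that every $1$-face of an additive simplex is already a standard edge. The only nontrivial case is an edge $\{\Conj{z_1 z_2},\Conj{z_j}\}$. For $j\geq 3$, the set $\{z_1 z_2,z_2,z_3,\ldots,z_{k-1}\}$ is a partial basis by the Nielsen move above, and $\{z_1 z_2,z_j\}$ is a subset. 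For $j=1$ or $j=2$, the pair generates the same rank-$2$ free subgroup as $\{z_1,z_2\}$, and is therefore itself a basis of that subgroup, extending to a partial basis of $F_n$.

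Once the $1$-skeletons coincide, $\ABases_n$ may be viewed as built from $\Bases_n$ by gluing in simplices of dimension $\geq 2$. A standard argument from CW-theory (cellular approximation, or the fact that attaching cells of dimension $\geq 2$ preserves $\pi_0$ and induces a surjection on $\pi_1$) then shows that the inclusion $\Bases_n\hookrightarrow\ABases_n$ is bijective on $\pi_0$ and surjective on $\pi_1$. Combining this with Proposition~\ref{proposition:basescon}, which asserts that $\Bases_n$ is $1$-connected for $n\geq 3$, yields the $1$-connectivity of $\ABases_n$ for $n\geq 3$.

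The only real content of the proof is the combinatorial verification of the step about edges, and there is no serious obstacle: it is essentially the single Nielsen-type computation $\langle z_1 z_2,z_1\rangle=\langle z_1,z_2\rangle$ plus bookkeeping. The topology is entirely routine once the $1$-skeleton comparison is in place.
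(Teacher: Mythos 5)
Your proof is correct and is exactly the paper's argument spelled out in full detail: the paper simply asserts that $\ABases_n$ is obtained from $\Bases_n$ by adding simplices of dimension at least $2$ and so inherits $1$-connectivity, while you supply the routine verification (primitivity of $z_1z_2$, the $1$-skeletons coincide via Nielsen moves, and the standard cell-attachment fact about $\pi_0$ and $\pi_1$).
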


To help us understand the connectivity of $\ABases_n / \IA_n$, we introduce the following complex.
For $\vec{v} \in \Z^n$, let $\Lax{\vec{v}}$ denote the set $\{\vec{v},-\vec{v}\}$.

\begin{definition}
A {\em partial basis} for $\Z^n$ is a set $\{\vec{v}_1,\ldots,\vec{v}_k\} \subset \Z^n$ such that there
exist $\vec{v}_{k+1},\ldots,\vec{v}_n \in \Z^n$ with $\{\vec{v}_1,\ldots,\vec{v}_n\}$ a basis for $\Z^n$.
The {\em augmented complex of lax partial bases for $\Z^n$}, denoted $\ABases_n(\Z)$, is the simplicial
complex whose $(k-1)$-simplices are as follows.
\begin{compactitem}
\item Sets of the form $\{\Lax{\vec{v}_1},\ldots,\Lax{\vec{v}_k}\}$, where $\{\vec{v}_1,\ldots,\vec{v}_k\}$ is a partial basis
for $\Z^n$.  These will be called the {\em standard simplices}.
\item Sets of the form $\{\Lax{\vec{v}_1 + \vec{v}_2},\Lax{\vec{v}_1},\Lax{\vec{v}_2},\ldots,\Lax{\vec{v}_{k-1}}\}$,
where $\{\vec{v}_1,\ldots,\vec{v}_{k-1}\}$ is a partial basis for $\Z^n$.
These will be called the {\em additive simplices}.
\end{compactitem}
\end{definition}

\noindent
We then have the following lemma.

\begin{lemma}
\label{lemma:identifyquotient}
We have $\ABases_n / \IA_n \cong \ABases_n(\Z)$ for $n \geq 1$.
\end{lemma}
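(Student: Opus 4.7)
The plan is to exhibit a natural simplicial map $\ABases_n \to \ABases_n(\Z)$ and show it descends to the claimed isomorphism on the quotient by $\IA_n$. Define $\mu \co \ABases_n \to \ABases_n(\Z)$ on vertices by $\Conj{z} \mapsto \Lax{[z]}$, where $[z]$ denotes the image of $z$ in $F_n^{\text{ab}} \cong \Z^n$. This is well-defined (conjugation and inversion in $F_n$ are absorbed by the $\Lax{\cdot}$-equivalence on $\Z^n$), simplicial (the abelianization of a partial basis of $F_n$ is a partial basis of $\Z^n$, and additive simplices are sent to additive simplices), and $\Aut(F_n)$-equivariant. Because $\IA_n$ acts trivially on $F_n^{\text{ab}}$, the map $\mu$ is constant on $\IA_n$-orbits and induces a simplicial map $\bar\mu \co \ABases_n / \IA_n \to \ABases_n(\Z)$. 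The goal is to show $\bar\mu$ is a simplicial isomorphism.

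Surjectivity of $\mu$ on vertices and on simplices is a lifting exercise. Every partial basis of $\Z^n$ extends to a basis of $\Z^n$, and every basis of $\Z^n$ is the abelianization of a basis of $F_n$ (apply an element of $\Aut(F_n)$ mapping to the required $\GL_n(\Z)$-element to the free basis $\{x_1,\ldots,x_n\}$). Additive simplices of $\ABases_n(\Z)$ lift after choosing a partial-basis lift $z_1,\ldots,z_{k-1}$ of $\vec{v}_1,\ldots,\vec{v}_{k-1}$ and then forming the product $z_1 z_2$.

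The crux of the proof is showing that $\IA_n$ acts transitively on each $\mu$-fiber of simplices. I would base this on the following key algebraic input: for every partial basis $\{z_1,\ldots,z_k\}$ of $F_n$, the restriction map on pointwise stabilizers
\[
\Aut(F_n)_{z_1,\ldots,z_k} \longrightarrow \GL_n(\Z)_{[z_1],\ldots,[z_k]}
\]
is surjective. This is proved by extending $\{z_1,\ldots,z_k\}$ to a free basis of $F_n$ and observing that $\GL_n(\Z)_{[z_1],\ldots,[z_k]}$ is generated by Nielsen-type matrices acting on the complementary basis vectors together with transvections adding $\pm[z_i]$ to the complementary vectors, each of which visibly lifts to an automorphism of $F_n$ fixing every $z_i$. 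Granting this, consider two standard simplices $\{\Conj{z_i}\}$ and $\{\Conj{w_i}\}$ with the same $\mu$-image. After reordering indices and replacing some $w_i$ by $w_i^{-1}$ (operations which do not change the simplex as a set of $\Conj{\cdot}$-classes), we arrange $[z_i]=[w_i]$. Since $\Aut(F_n)$ acts transitively on ordered partial bases, pick $\Phi \in \Aut(F_n)$ with $\Phi(z_i) = w_i$; its image $\bar\Phi \in \GL_n(\Z)$ fixes every $[z_i]$, so the stabilizer surjection produces $\Psi \in \Aut(F_n)$ with $\Psi(z_i)=z_i$ and $\bar\Psi = \bar\Phi$. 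Then $\Phi\Psi^{-1} \in \IA_n$ sends each $z_i$ to $w_i$, giving the required element. The additive case reduces to the standard one by using the freedom to rewrite an additive simplex so that any of its vertices plays the role of the sum, thereby aligning the $\IA_n$-orbits of the partial-basis parts of the two simplices before applying the argument above; the element of $\IA_n$ then carries $\Conj{z_1 z_2}$ to $\Conj{w_1 w_2}$ automatically.

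The main obstacle is establishing the stabilizer surjection, which requires a concrete analysis of Nielsen generators relative to an extension of the partial basis; everything else is bookkeeping with the quotient structure. Once the surjection is in hand, bijectivity of $\bar\mu$ on vertices follows as the $k=1$ case, and $\IA_n$-transitivity on fibers of simplices upgrades this to a bijection on simplex sets in every dimension, yielding the required simplicial isomorphism.
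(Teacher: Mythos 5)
Your proposal is correct and has the same overall architecture as the paper's proof: define the natural map to $\ABases_n(\Z)$, verify surjectivity, and verify that $\IA_n$ acts transitively on each fiber of simplices. The one genuine difference is the key algebraic input for the transitivity step. The paper invokes the basis-completion lemma (Lemma \ref{lemma:basiscompletion}, citing \cite{DayPutmanComplex}) twice, completing both $\{z_i\}$ and $\{z_i'\}$ to full free bases of $F_n$ lying over a common $\Z^n$-basis, and then sends one to the other. You instead isolate and prove a stabilizer-surjection lemma---that $\Aut(F_n)_{z_1,\ldots,z_k} \to \GL_n(\Z)_{[z_1],\ldots,[z_k]}$ is onto---directly from the block form $\left(\begin{smallmatrix} I_k & B \\ 0 & C \end{smallmatrix}\right)$ and the visible Nielsen lifts of its generators, and then obtain the desired element of $\IA_n$ as $\Phi\Psi^{-1}$. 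The two lemmas are straightforwardly equivalent, so this is mostly a repackaging, but your version has the virtue of being self-contained where the paper falls back on a citation.

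One place you should tighten: the additive case is where the bookkeeping is actually delicate, and your description is vaguer than it should be. The paper carefully notes that the ``additive triple'' $\{\Lax{[z_1]+[z_2]},\Lax{[z_1]},\Lax{[z_2]}\}$ is determined inside $\rho(\sigma)$ as the unique minimal linearly dependent subset, that an additive simplex has exactly six presentations obtained by permuting which vertex is the sum and flipping signs, and---most importantly---that after matching $\Lax{[z_1]}=\Lax{[z_1']}$, $\Lax{[z_2]}=\Lax{[z_2']}$, $\Lax{[z_1]+[z_2]}=\Lax{[z_1']+[z_2']}$, the pair $([z_1],[z_2])$ agrees with $([z_1'],[z_2'])$ only up to a \emph{global} sign flip (changing the sign of just one of them would alter $\Lax{[z_1]+[z_2]}$). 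Only after that sign-coherence step does one get $[z_i]=[z_i']$ for \emph{all} $i$, which is exactly what guarantees the $f\in\IA_n$ you build satisfies $f(z_1 z_2)=w_1 w_2$ on the nose and hence carries the additive vertex correctly. Your phrase ``the freedom to rewrite an additive simplex so that any of its vertices plays the role of the sum'' gestures at this, but a skeptical reader cannot reconstruct the argument from it; spell out the sign-coherence point explicitly.
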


\noindent
For the proof of Lemma \ref{lemma:identifyquotient}, we will need the following result of the authors.
For $z \in F_n$, let $[z] \in \Z^n$ be the associated element of the abelianization of $F_n$.

\begin{lemma}[{\cite[Lemma 5.3]{DayPutmanComplex}}]
\label{lemma:basiscompletion}
Let $\{\vec{v}_1,\ldots,\vec{v}_n\}$ be a basis for $\Z^n$ and let $\{z_1,\ldots,z_k\}$ be a partial
basis for $F_n$ such that $[z_i] = \vec{v}_i$ for $1 \leq i \leq k$.  Then there exists $z_{k+1},\ldots,z_n \in F_n$
with $[z_i] = \vec{v}_i$ for $k+1 \leq i \leq n$ such that $\{z_1,\ldots,z_n\}$ is a basis for $F_n$.
\end{lemma}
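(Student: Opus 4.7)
The plan is to decouple the problem into a linear algebra step on $\Z^n$ and an abelianization-preserving adjustment step on $F_n$, using the fact that a partial basis of $F_n$ spans a free factor. First, since $\{z_1,\ldots,z_k\}$ is a partial basis, by definition I can extend it to a basis $\{z_1,\ldots,z_k,w_{k+1},\ldots,w_n\}$ of $F_n$. Abelianizing gives a basis $\{\vec{v}_1,\ldots,\vec{v}_k,[w_{k+1}],\ldots,[w_n]\}$ of $\Z^n$ that agrees with the target basis on the first $k$ vectors, so I can write
\[
\vec{v}_j = \sum_{i=1}^k a_{ij}\,\vec{v}_i + \sum_{i=k+1}^n b_{ij}\,[w_i] \qquad (k+1 \le j \le n)
\]
for unique integers $a_{ij}, b_{ij}$. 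The change-of-basis matrix from the $w$-basis to the $\vec{v}$-basis is block upper triangular with the identity in its top-left $k\times k$ block, so its bottom-right block $B=(b_{ij})_{k+1\le i,j\le n}$ lies in $\GL_{n-k}(\Z)$.

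Next, I would use the surjection $\Aut(F_{n-k}) \twoheadrightarrow \GL_{n-k}(\Z)$, applied to the free factor $F_{n-k} = \langle w_{k+1},\ldots,w_n\rangle \le F_n$, to realize $B$ by an automorphism of $F_{n-k}$. This produces a basis $\{u_{k+1},\ldots,u_n\}$ of $F_{n-k}$ with $[u_j] = \sum_{i>k} b_{ij}[w_i]$. Because $\{z_1,\ldots,z_k\}$ is a partial basis, $F_n$ splits as the free product $\langle z_1,\ldots,z_k\rangle \ast F_{n-k}$, and so $\{z_1,\ldots,z_k,u_{k+1},\ldots,u_n\}$ is again a basis of $F_n$.

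Finally, for $k+1 \le j \le n$ I would set
\[
z_j := z_1^{a_{1j}} z_2^{a_{2j}} \cdots z_k^{a_{kj}}\, u_j.
\]
Then $[z_j] = \sum_{i\le k} a_{ij}\vec{v}_i + \sum_{i>k} b_{ij}[w_i] = \vec{v}_j$ by construction, so the abelianization condition holds. Moreover, passing from $\{z_1,\ldots,z_k,u_{k+1},\ldots,u_n\}$ to $\{z_1,\ldots,z_k,z_{k+1},\ldots,z_n\}$ amounts to left-multiplying each $u_j$ by a fixed word in the other basis elements, which is a composition of elementary Nielsen transformations and hence preserves the property of being a free basis. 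There is no real obstacle here: the only point that requires care is checking that the block $B$ is invertible over $\Z$, and this is immediate from the block upper triangular structure of the change-of-basis matrix. Everything else is a direct construction.
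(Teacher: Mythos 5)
Your argument is correct and complete. The paper itself does not prove this lemma --- it only cites \cite[Lemma 5.3]{DayPutmanComplex} --- and your proof is essentially the standard one used there: extend $\{z_1,\ldots,z_k\}$ to a free basis, observe that the resulting change-of-basis matrix on $\Z^n$ has the block form $\left(\begin{smallmatrix} I & A \\ 0 & B\end{smallmatrix}\right)$ with $B \in \GL_{n-k}(\Z)$, realize $B$ by an automorphism of the complementary free factor via the surjection $\Aut(F_{n-k}) \twoheadrightarrow \GL_{n-k}(\Z)$, and absorb the $A$-block by Nielsen transformations that left-multiply each new basis element by a word in $z_1,\ldots,z_k$. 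All the steps check out, including the key point that $\det B = \pm 1$ because the full matrix lies in $\GL_n(\Z)$.
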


\begin{proof}[{Proof of Lemma \ref{lemma:identifyquotient}}]
The map $(\ABases_n)^{(0)} \rightarrow (\ABases_n(\Z))^{(0)}$ that takes $\Conj{z}$ to $[z]$ extends to
a simplicial map $\rho\colon \ABases_n \rightarrow \ABases_n(\Z)$.  Since $\IA_n$ acts without rotations
on $\ABases_n$, the quotient $\ABases_n/\IA_n$ has a natural CW-complex structure whose $k$-cells
are the $\IA_n$-orbits of the $k$-cells of $\ABases_n$ (warning: though it will turn out that in this
case it is, this CW-complex structure need not be a simplicial complex structure; consider, for example,
the action of $\Z$ by translations on the standard triangulation of $\R$ whose vertices are $\Z$).  
Since $\rho$ is $\IA_n$-invariant,
it factors through a map $\overline{\rho}\colon \ABases_n/\IA_n \rightarrow \ABases_n(\Z)$.  We will
prove that $\overline{\rho}$ is an isomorphism of CW-complexes.

This requires checking two things.  The first is that every simplex of $\ABases_n(\Z)$ is in the image
of $\rho$, which is an immediate consequence of Lemma \ref{lemma:basiscompletion}.  The second
is that if $\sigma$ and $\sigma'$ are simplices of $\ABases_n$ such that $\rho(\sigma) = \rho(\sigma')$, then
there exists some $f \in \IA_n$ such that $f(\sigma) = \sigma'$.  It is clear that $\sigma$ and $\sigma'$
are either both standard simplices or both additive simplices.  Assume first that they are both
standard simplices.  We can then write
\[\sigma = \{\Conj{z_1},\ldots,\Conj{z_k}\} \quad \text{and} \quad \sigma' = \{\Conj{z_1'},\ldots,\Conj{z_k'}\}\]
as in the definition of standard simplices  with $[z_i] = [z_i']$ for $1 \leq i \leq k$.  
Set $\vec{v}_i = [z_i] = [z_i']$ for $1 \leq i \leq k$.  The
set $\{\vec{v}_1,\ldots,\vec{v}_k\}$ is a partial basis for $\Z^n$, so we can extend it to a basis
$\{\vec{v}_1,\ldots,\vec{v}_n\}$.  Applying Lemma \ref{lemma:basiscompletion} twice, we can find
$z_{k+1},\ldots,z_n \in F_n$ and $z_{k+1}',\ldots,z_n' \in F_n$ such that $[z_i] = [z_i'] = \vec{v}_i$ for
$k+1 \leq i \leq n$ and such that both $\{z_1,\ldots,z_n\}$ and $\{z_1',\ldots,z_n'\}$ are free bases
for $F_n$.  There then exists $f \in \Aut(F_n)$ such that $f(z_i) = z_i'$ for $1 \leq i \leq n$.  By construction, we have
$f \in \IA_n$ and $f(\sigma) = \sigma'$.

It remains to deal with the case where $\sigma$ and $\sigma'$ are both simplices of additive type.  Write
\[\sigma = \{\Conj{z_1 z_2}, \Conj{z_1},\Conj{z_2},\ldots,\Conj{z_{k-1}}\} \quad \text{and} \quad \sigma' = \{\Conj{z_1' z_2'},\Conj{z_1'},\ldots,\Conj{z_{k-1}'}\}\]
as in the definition of additive simplices.  The unordered sets $\{\Lax{[z_1]+[z_2]},\Lax{[z_1]},\Lax{[z_2]}\}$ and
$\{\Lax{[z_1']+[z_2']},\Lax{[z_1']},\Lax{[z_2']}\}$ are minimal nonempty subsets of $\rho(\sigma) = \rho(\sigma')$ such
that the defining elements of $\Z^n$ are not linearly independent.  It follows that as unordered sets
we have
\[\rho(\{\Conj{z_1 z_2}, \Conj{z_1},\Conj{z_2}\}) = \rho(\{\Conj{z_1' z_2'}, \Conj{z_1'},\Conj{z_2'}\})\]
and
\[\rho(\{\Conj{z_3},\ldots,\Conj{z_{k-1}}\}) = \rho(\{\Conj{z_3'},\ldots,\Conj{z_{k-1}'}\}).\]
Reordering the $z_i$ and possibly replacing some of the $z_i$ by $z_i^{-1}$ (which does not change
$\Conj{z_i}$), we can assume that $[z_i] = [z_i']$ for $3 \leq i \leq k-1$.  

The next observation is that all of the following sets define the same additive simplex (but with the vertices
in a different order; all six possible orderings occur):
\begin{align*}
&\{\Conj{z_1 z_2},\Conj{z_1},\Conj{z_2}\}, \{\Conj{z_2 z_1},\Conj{z_2},\Conj{z_1}\},
\{\Conj{z_1},\Conj{z_1 z_2},\Conj{z_2^{-1}}\},\\
&\quad\quad\quad\{\Conj{z_1^{-1}},\Conj{z_2},\Conj{z_2^{-1} z_1^{-1}}\},
\{\Conj{z_2},\Conj{z_2 z_1},\Conj{z_1^{-1}}\}, \{\Conj{z_2^{-1}},\Conj{z_1},\Conj{z_1^{-1} z_2^{-1}}\}.
\end{align*}
By reordering $\sigma$ and possibly changing some of our expressions for the elements in it again, we can
assume that
\[\Lax{[z_1]+[z_2]} = \Lax{[z_1']+[z_2']} \quad \text{and} \quad \Lax{[z_1]}=\Lax{[z_1']} \quad \text{and} \quad
\Lax{[z_2]}=\Lax{[z_2']}\]
and that $[z_i] = [z_i']$ for $3 \leq i \leq k-1$.

The final observation is that either $([z_1],[z_2]) = ([z_1'],[z_2'])$ or
$([z_1],[z_2])=(-[z_1'],-[z_2'])$; the
key point here is that changing the sign of one of $\{[z_1],[z_2]\}$ but not the other changes
$\Lax{[z_1]+[z_2]}$.  If the second possibility occurs, then replace $z_1$ and $z_2$ with $z_1^{-1}$ and $z_2^{-1}$,
respectively; this does not change $\sigma$.  The upshot is that we now have arranged for
$[z_i] = [z_i']$ for all $1 \leq i \leq k-1$.  By the same argument we used to deal with standard
simplices, there exists some $f \in \IA_n$ such that $f(z_i) = z_i'$ for $1 \leq i \leq k-1$.  Since
$f(z_1 z_2) = z_1' z_2'$, we see that $f(\sigma) = \sigma'$, as desired.
\end{proof}

The second author together with Church proved in \cite{ChurchPutmanCodim1} that
$\ABases_n(\Z)$ is $(n-1)$-connected for $n \geq 1$.  We therefore deduce the following.

\begin{proposition}
\label{proposition:abasesiacon}
The complex $\ABases_n / \IA_n$ is $(n-1)$-connected for $n \geq 1$.
\end{proposition}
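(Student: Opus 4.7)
My plan is to deduce the proposition directly from Lemma \ref{lemma:identifyquotient} combined with the connectivity result of Church and the second author \cite{ChurchPutmanCodim1}. Specifically, Lemma \ref{lemma:identifyquotient} provides an isomorphism of CW complexes $\ABases_n / \IA_n \cong \ABases_n(\Z)$, and $(n-1)$-connectivity is a topological invariant, so it suffices to know that $\ABases_n(\Z)$ is $(n-1)$-connected. This is exactly the content of the theorem proved in \cite{ChurchPutmanCodim1}, so the proposition is an immediate corollary.

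There is essentially no obstacle here, since both inputs are already in hand: the delicate case analysis needed for Lemma \ref{lemma:identifyquotient} (matching standard and additive simplices up to the $\IA_n$-action, and in particular exploiting the six equivalent orderings of the vertices of an additive $2$-simplex together with the freedom to invert $z_1$ and $z_2$ simultaneously) has already been carried out in the previous proof, and the connectivity of $\ABases_n(\Z)$ is treated as an external black box. The one conceptual point worth flagging is that Lemma \ref{lemma:identifyquotient} gives more than a bijection of underlying sets: because $\IA_n$ acts without rotations, the quotient inherits a genuine CW structure and the map $\overline{\rho}$ is shown to be a cellular isomorphism, which is what allows us to transport the connectivity statement across it.

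If I wanted to make the proof self-contained, the main obstacle would be reproving the connectivity of $\ABases_n(\Z)$ from \cite{ChurchPutmanCodim1}; that argument uses techniques (Bestvina--Brady style Morse theory on complexes of partial bases of $\Z^n$, together with a reduction to the well-known high connectivity of the ordinary complex of partial bases) quite different from the combinatorial group theory that dominates the rest of this paper. I am content to cite it and move on.
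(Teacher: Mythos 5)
Your proposal is correct and matches the paper's argument exactly: the paper deduces Proposition \ref{proposition:abasesiacon} immediately from the isomorphism of CW-complexes in Lemma \ref{lemma:identifyquotient} together with the cited result from \cite{ChurchPutmanCodim1} that $\ABases_n(\Z)$ is $(n-1)$-connected. Nothing further is needed.
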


\subsection{Generators for simplex stabilizers}
\label{section:simplexstabilizers}

This section is devoted to the following proposition, which gives generators
for the stabilizers in $\IA_n$ of simplices of $\Bases_n$.  Recall that
$S_{\Mag}(n)$ is Magnus's generating set for $\IA_n$ discussed in the introduction.

\begin{proposition}
\label{proposition:stabilizergenerators}
Fix $1 \leq k \leq n$ and define $\Gamma = (\IA_n)_{\Conj{x_{n-k+1}},\Conj{x_{n-k+2}},\ldots,\Conj{x_n}}$.  Then $\Gamma$
is generated by
\begin{align*}
S_{\Mag}(n) \cap \Gamma &= \Set{$\Con{x_a}{x_b}$}{$1\leq a,b\leq n$ distinct}\\
&\quad\quad\cup\Set{$\Mulcomm{x_a}{x_b}{x_c}$}{$1\leq a \leq n-k$, $1\leq b,c\leq n$ distinct}.
\end{align*}
\end{proposition}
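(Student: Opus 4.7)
The plan is to prove the statement by induction on $k$, using the Birman-type exact sequence for stabilizers in $\IA_n$ of conjugacy classes of primitive elements constructed in the companion paper \cite{DayPutmanBirmanIA}. Let $\Gamma' \leq \Gamma$ denote the subgroup generated by $S_{\Mag}(n) \cap \Gamma$; the inclusion $\Gamma' \subseteq \Gamma$ is clear, so the content is the reverse inclusion. Magnus's theorem $\IA_n = \langle S_{\Mag}(n) \rangle$ provides the (trivial) base case $k=0$, where the restriction $a \leq n-k = n$ is vacuous.

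For the inductive step from $k-1$ to $k$, I would first apply the Birman exact sequence from \cite{DayPutmanBirmanIA} to $(\IA_n)_{\Conj{x_n}}$:
\[
1 \longrightarrow K \longrightarrow (\IA_n)_{\Conj{x_n}} \longrightarrow \IA_{n-1} \longrightarrow 1,
\]
where the right-hand map is induced by the projection $F_n \twoheadrightarrow F_n / \langle\langle x_n\rangle\rangle \cong F_{n-1}$, and \cite{DayPutmanBirmanIA} furnishes an explicit generating set for $K$ (partial conjugations and commutator transvections naturally adapted to $x_n$). Intersecting each term with the stabilizer of the remaining classes $\Conj{x_{n-k+1}}, \ldots, \Conj{x_{n-1}}$ produces
\[
1 \longrightarrow K \cap \Gamma \longrightarrow \Gamma \longrightarrow (\IA_{n-1})_{\Conj{\bar{x}_{n-k+1}}, \ldots, \Conj{\bar{x}_{n-1}}} \longrightarrow 1,
\]
where $\bar{x}_i$ denotes the image of $x_i$ in $F_{n-1}$. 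Surjectivity of the right-hand map follows from a lifting argument in the spirit of Lemma \ref{lemma:basiscompletion}: any element of the lower stabilizer has a lift to $\Gamma$ that fixes $x_n$ on the nose.

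The induction hypothesis applied to $\IA_{n-1}$ with parameter $k-1$ identifies the right-hand group as generated by the conjugation moves $\Con{\bar{x}_a}{\bar{x}_b}$ and the commutator transvections $\Mulcomm{\bar{x}_a}{\bar{x}_b}{\bar{x}_c}$ with $a \leq (n-1)-(k-1) = n-k$. Each such generator lifts to the syntactically identical element of $\IA_n$, which fixes $x_n$ and hence lies in $\Gamma$ and in $S_{\Mag}(n) \cap \Gamma$. Combined with the kernel generators from \cite{DayPutmanBirmanIA}, which by their form automatically preserve $\Conj{x_i}$ for all $i < n$ and are themselves of the shape $\Con{x_a}{x_n}$, $\Con{x_n}{x_b}$, or $\Mulcomm{x_a}{\cdot}{\cdot}$ with $a \leq n-k$, we obtain a generating set for $\Gamma$ contained in $S_{\Mag}(n) \cap \Gamma$, completing the induction.

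The main obstacle is the identification of a convenient generating set for the kernel $K$ of the Birman-type exact sequence, such that every kernel generator lies in $S_{\Mag}(n) \cap \Gamma$; this analysis, which is the content of \cite{DayPutmanBirmanIA}, is the real work. Once this is granted, the present proposition reduces to careful bookkeeping combined with the lifting and induction argument above.
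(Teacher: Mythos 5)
Your approach diverges from the paper's, and there is a genuine gap in the kernel step. The paper avoids induction entirely: it passes at one stroke to the quotient $F_n \twoheadrightarrow F_{n-k}$ killing all of $x_{n-k+1},\dots,x_n$ simultaneously, obtaining a split surjection $\rho\colon \Gamma \to \IA_{n-k}$ with kernel $\BirKer_{n-k,k}$, and then cites Theorem~A of \cite{DayPutmanBirmanIA} for an explicit generating set of the {\em multi-variable} Birman kernel $\BirKer_{n-k,k}$. The union of those generators with the lifted Magnus generators of $\IA_{n-k}$ is exactly $S_{\Mag}(n)\cap\Gamma$, and the semidirect product structure $\Gamma = \BirKer_{n-k,k}\rtimes\IA_{n-k}$ finishes the proof. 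No intersection of a single-variable kernel with a further stabilizer is ever taken.

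Your inductive scheme peels off one variable at a time, which forces you to compute generators for $\BirKer_{n-1,1}\cap\Gamma$, and your treatment of this is incorrect. You assert that the generators of $\BirKer_{n-1,1}$ ``by their form automatically preserve $\Conj{x_i}$ for all $i<n$'' and ``are themselves of the shape $\Con{x_a}{x_n}$, $\Con{x_n}{x_b}$, or $\Mulcomm{x_a}{\cdot}{\cdot}$ with $a\le n-k$.'' Both claims are false: the generating set for $\BirKer_{n-1,1}$ includes $\Mulcomm{x_a^{\alpha}}{x_b^{\beta}}{x_n^{\gamma}}$ and $\Mulcomm{x_a^{\alpha}}{x_n^{\gamma}}{x_b^{\beta}}$ for {\em all} $1\le a,b < n$ distinct, with no restriction $a\le n-k$. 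When $k\ge 2$ and $n-k+1\le a\le n-1$, such a generator sends $x_a^{\alpha}$ to $[x_b^{\beta},x_n^{\gamma}]x_a^{\alpha}$, which is not a conjugate of $x_a^{\pm 1}$, so it does not lie in $\Gamma$. Consequently $\BirKer_{n-1,1}\cap\Gamma$ is not simply the subgroup generated by those kernel generators that happen to lie in $\Gamma$; in general, for $K\triangleleft G$ with generating set $S$ and a subgroup $H\le G$, the group $K\cap H$ need not be generated by $S\cap H$. Establishing that this {\em does} hold here is exactly the content that would need a separate argument, and it is precisely what the paper's appeal to the multi-variable Theorem~A supplies without any intersection bookkeeping. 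To repair your proof, you would either need to reprove that intersection statement (essentially redoing the companion paper's work for $\BirKer_{n-1,1}\cap\Gamma$) or, more simply, adopt the paper's one-shot use of $\BirKer_{n-k,k}$.
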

\begin{proof}
The map $F_n \rightarrow F_{n-k}$
that quotients out by the normal closure of $\{x_{n-k+1},\ldots,x_n\}$ induces a split surjection
$\rho \colon \Gamma \rightarrow \IA_{n-k}$.  Define $\BirKer_{n-k,k} = \ker(\rho)$, so we have 
$\Gamma = \BirKer_{n-k,k} \rtimes \IA_{n-k}$.  As we said in the introduction, Magnus \cite{MagnusGenerators}
proved that $\IA_{n-k}$ is generated by 
\begin{equation}
\label{eqn:partialgenset1}
\Set{$\Con{x_a}{x_b}$}{$1\leq a,b\leq n-k$ distinct} \cup\Set{$\Mulcomm{x_a}{x_b}{x_c}$}{$1\leq a,b,c \leq n-k$ distinct}.
\end{equation}
The authors proved in \cite[Theorem~A]{DayPutmanBirmanIA} that $\BirKer_{n-k,k}$ is generated by
\begin{equation}
\label{eqn:partialgenset2}
\begin{split}
&\Set{$\Con{x_a}{x_b}$}{$n-k+1\leq a \leq n$, $1 \leq b \leq n$ distinct}\\
&\quad\quad \cup \Set{$\Con{x_a}{x_b}$}{$1 \leq a \leq n$, $n-k+1 \leq b \leq n$ distinct}\\
&\quad\quad \cup \Set{$\Mulcomm{x_a}{x_b}{x_c}$}{$1 \leq a \leq n-k$, $n-k-1 \leq b \leq n$, $1 \leq c \leq n$ distinct}.
\end{split}
\end{equation}

The union of \eqref{eqn:partialgenset1} and \eqref{eqn:partialgenset2} is the claimed generating set for $\Gamma$.
\end{proof}

\begin{remark}
\label{remark:conjugacyclass}
For $z \in F_n$, define $\Conj{z}'$ to be the conjugacy class of $z$.  The reference \cite{DayPutmanBirmanIA} actually
deals with $(\IA_n)_{\Conj{x_{n-k+1}}',\Conj{x_{n-k+2}}',\ldots,\Conj{x_n}'}$ instead of
$(\IA_n)_{\Conj{x_{n-k+1}},\Conj{x_{n-k+2}},\ldots,\Conj{x_n}}$; however, since $x_i$ and $x_i^{-1}$ have different
images in $F_n^{\text{ab}}$ these two stabilizer subgroups are actually equal.
There are also notational differences: the group denoted $\BirKer_{n-k,k}$ here is denoted $\BirKer_{n-k,k}^{\IA}$ in that paper.
\end{remark}

\subsection{The action of \texorpdfstring{$\Aut(F_n)$}{Aut(Fn)}}
\label{section:autaction}

Let $\Quotient_n$ be the group with the L-presentation $\LPres{S_{\IA}(n)}{R_{\IA}^0(n)}{E_{\IA}(n)}$
discussed in \S \ref{section:thepresentation}.  
By Propositions~\ref{proposition:verifyrels} and~\ref{proposition:verifyendomorphisms}, there is a map map $\pi \colon \Quotient_n \rightarrow \IA_n$.
The group $\Aut(F_n)$ acts on $\IA_n$ by conjugation.  The goal of this section is
to state Proposition \ref{proposition:autaction} below, which asserts that this action can be lifted to $\Quotient_n$.

To state some important properties of this lifted action, we must introduce some notation.
First, let $S_{\Aut}(n) \subset \Aut(F_n)$ be the generating
set discussed in \S \ref{section:thepresentation}.  Recall that $E_{\IA}(n) \subset \End(F(S_{\IA}(n)))$
is the image of a map $\ttheta \colon S_{\Aut}(n) \rightarrow \End(F(S_{\IA}(n)))$.  
There is thus a map
$e \colon S_{\Aut}(n) \rightarrow \End(\Quotient_n)$ whose image is the set of induced endomorphisms of
our L-presentation.  It will turn out that the image of $e$ consists of automorphisms and these
automorphisms generate the action of $\Aut(F_n)$ on $\Quotient_n$.

Second, recall that $\{x_1,\ldots,x_n\}$
is a fixed free basis for $F_n$.  Let 
\begin{align*}
(S_{\IA}(n))_{\Conj{x_n}} = &\Set{$\Con{x_a}{x_b}$}{$1 \leq a,b \leq n$ distinct} \\
& \cup \Set{$\Mulcomm{x_a^{\alpha}}{x_b^{\beta}}{x_c^{\gamma}}$}{$1 \leq a,b,c \leq n$ distinct, $\alpha,\beta,\gamma \in \{\pm 1\}$, $a \neq n$}.
\end{align*}
This is exactly the subset of $S_{\IA}(n) \subset \IA_n$ consisting of automorphisms that fix $\Conj{x_n}$;
Proposition~\ref{proposition:stabilizergenerators} (with $k=1$) implies that it generates the stabilizer subgroup $(\IA_n)_{\Conj{x_n}}$.
Define $(\Quotient_n)_{\Conj{x_n}}$ be the subgroup of $\Quotient_n$ generated by $(S_{\IA}(n))_{\Conj{x_n}}$.  We will
then require the stabilizer subgroup $(\Aut(F_n))_{\Conj{x}}$ to preserve the subgroup
$(\Quotient_n)_{\Conj{x_n}}$.

Our proposition is as follows.

\begin{proposition}
\label{proposition:autaction}
For all $n \geq 2$, there is an action of $\Aut(F_n)$ on $\Quotient_n$ that satisfies the following
three properties.
\vspace*{-2ex}\begin{enumerate}\setlength{\itemsep}{0ex}
\item The action comes from the induced endomorphisms in the sense
that for $s \in S_{\IA}(n) \subset \Aut(F_n)$ and $q \in \Quotient_n$, we have $s \cdot q = e(s) \cdot q$.
\item The restriction of the action to $\IA_n$ induces the conjugation action of $\Quotient_n$
on itself in the sense that for $q,r \in \Quotient_n$, we have $\pi(r) \cdot q = r q r^{-1}$.
\item For $\eta \in (\Aut(F_n))_{\Conj{x_n}}$ and $q \in (\Quotient_n)_{\Conj{x_n}}$, we have
$\eta \cdot q \in (\Quotient_n)_{\Conj{x_n}}$.
\end{enumerate}
\end{proposition}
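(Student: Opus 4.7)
The plan is to lift the conjugation action of $\Aut(F_n)$ on $\IA_n$ along the projection $\pi\colon\Quotient_n\to\IA_n$ by using the induced endomorphisms of the L-presentation as the action of the generators $S_{\Aut}(n)$. By the very definition of an L-presentation, each $\ttheta(s)\in E_{\IA}(n)$ descends to a well defined endomorphism $e(s)\in\End(\Quotient_n)$; the task is to upgrade these to automorphisms and verify that they satisfy the defining relations of $\Aut(F_n)$, which will assemble them into the required homomorphism $\Aut(F_n)\to\Aut(\Quotient_n)$ and make Property (1) automatic.

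First I would verify that each $e(s)$ is an automorphism with inverse $e(s^{-1})$. Because $S_{\Aut}(n)$ is closed under inversion, this amounts to checking $e(s)\circ e(s^{-1})$ and $e(s^{-1})\circ e(s)$ act trivially on every generator in $S_{\IA}(n)$ — a finite computation using Table \ref{table:thetadef} and the relations $R_{\IA}^0(n)$. Next I would fix a finite presentation of $\Aut(F_n)$ (for example the classical Nielsen/Gersten presentation) on the generating set $S_{\Aut}(n)$ and, for each defining relation $w=1$, verify that the corresponding composition of $e(s)$'s acts as the identity on every element of $S_{\IA}(n)$. This is the main obstacle: the formulas in Table \ref{table:thetadef} are already intricate, composing them for each Nielsen-type relation produces very long words, and each resulting word must be reduced inside $\Quotient_n$ using the full orbit of relations $R_{\IA}(n)$ — exactly the sort of mechanical bookkeeping that the file \verb+h2ia.g+ is built to perform, which is why the authors defer the verification to \S \ref{section:somecomputations}.

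With the action of $\Aut(F_n)$ on $\Quotient_n$ in hand, Property (2) reduces by the homomorphism property to checking, for each pair $s,t\in S_{\IA}(n)$, that $\pi(s)\cdot t = sts^{-1}$ in $\Quotient_n$. Writing $\pi(s)\in\IA_n$ as an explicit word in $S_{\Aut}(n)$, equation \eqref{eqn:thetakey} already guarantees the equality holds after projecting along $\pi$; lifting it to an equality in $\Quotient_n$ itself is another finite verification, relying on the extended relations. Property (3) is then handled by working with a generating set of $(\Aut(F_n))_{\Conj{x_n}}$ consisting of those elements of $S_{\Aut}(n)$ that fix $\Conj{x_n}$ together with the elements $\Con{x_n}{x_i}\in\IA_n$; the latter act on $\Quotient_n$ as conjugation by lifts which already lie in $(\Quotient_n)_{\Conj{x_n}}$, and for the former a direct inspection of Table \ref{table:thetadef} shows that $e(s)$ applied to any element of $(S_{\IA}(n))_{\Conj{x_n}}$ never introduces a generator of the form $\Mulcomm{x_n^\alpha}{x_b^\beta}{x_c^\gamma}$, so $e(s)$ preserves $(\Quotient_n)_{\Conj{x_n}}$ and the proof is complete.
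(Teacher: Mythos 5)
Your proposal follows essentially the same route as the paper's proof: lift $\ttheta$ to an action of $F(S_{\Aut}(n))$ on $\Quotient_n$ using the L-presentation property, show the generators act by automorphisms, verify the Nielsen relations of $\Aut(F_n)$ act trivially to descend the action, check Property (2) on generators $w,s\in S_{\IA}(n)$ by lifting $w$ to $F(S_{\Aut}(n))$, and handle Property (3) on a generating set of $(\Aut(F_n))_{\Conj{x_n}}$ consisting of swaps, inversions, transvections $\Mul{x_a^\alpha}{x_b}$ with $a\neq n$ (inspection of Table~\ref{table:thetadef}), and $\Con{x_n}{x_a}\in\IA_n$ (via Property (2)). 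The details match the paper's argument, with the computational verifications deferred to the GAP file exactly as in the paper.
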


The proof of Proposition \ref{proposition:autaction} is a computation with generators and relations (mostly done
by computer), so we have postponed it until \S \ref{section:somecomputations}.  

\subsection{A homomorphism between L-presentations}
\label{section:Lpreshom}

There is a natural split surjection $\rho \colon (\IA_n)_{\Conj{x_n}} \rightarrow \IA_{n-1}$ arising from
the map $F_n \rightarrow F_{n-1}$ which quotients out the normal closure of $x_n$.  Let
$\BirKer_{n-1,1} = \Ker(\rho)$, so we have a decomposition $(\IA_n)_{\Conj{x_n}} = \BirKer_{n-1,1} \rtimes \IA_{n-1}$.
Building on the Birman exact sequence for $\Aut(F_n)$ we constructed in \cite{DayPutmanBirman}, we constructed an L-presentation for 
$\BirKer_{n-1,1}$ in~\cite[Theorem~D]{DayPutmanBirmanIA} ($\BirKer_{n-1,1}$ is denoted $\BirKer_{n-1,1}^{\IA}$ in that paper).
This L-presentation plays a crucial role in the inductive step of our proof, because it allows us to obtain the following proposition:
\begin{proposition}\label{proposition:mapfromkernel}
There is a homomorphism $\BirKer_{n-1,1}\to \LPres{S_{\IA}(n)}{R_{\IA}^0(n)}{E_{\IA}(n)}$ fitting into the following commuting triangle:
\[
\xymatrix{
\BirKer_{n-1,1} \ar[r] \ar@{_{(}->}[dr] & \LPres{S_{\IA}(n)}{R_{\IA}^0(n)}{E_{\IA}(n)} \ar@{->>}[d] \\
& \IA_n
}
\]
\end{proposition}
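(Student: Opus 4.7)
The plan is to exploit the L-presentation of $\BirKer_{n-1,1}$ established in \cite[Theorem~D]{DayPutmanBirmanIA} together with the lifted $\Aut(F_n)$-action on $\Quotient_n$ supplied by Proposition \ref{proposition:autaction}. The universal property of an L-presentation is that a homomorphism out of it is determined by (i) a set-map on the generators satisfying the base relations, provided (ii) the prescribed endomorphisms of the L-presentation can be lifted compatibly. So the strategy is to produce both pieces of data from what we already have.

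First I would fix the set-map on generators. By Proposition \ref{proposition:stabilizergenerators} in the case $k=1$, the subgroup $(\IA_n)_{\Conj{x_n}}$, and \emph{a fortiori} its subgroup $\BirKer_{n-1,1}$, is generated by conjugation moves and commutator transvections that already sit inside $(S_{\IA}(n))_{\Conj{x_n}}$. Each such generator has a canonical lift to $(\Quotient_n)_{\Conj{x_n}} \subset \Quotient_n$, namely the generator of $\Quotient_n$ bearing the same name, and I assign this as the image. Under the projection $\pi\colon \Quotient_n \to \IA_n$ these lifts clearly map back to the automorphisms they name, so whatever homomorphism we ultimately produce will automatically make the triangle commute.

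Second I would verify the two pieces of data needed to promote this set-map to a homomorphism. For the endomorphisms: the L-presentation of $\BirKer_{n-1,1}$ in \cite{DayPutmanBirmanIA} uses endomorphisms coming from the conjugation action of $(\Aut(F_n))_{\Conj{x_n}}$ on $\BirKer_{n-1,1}$ inside the Birman-type exact sequence there. Proposition \ref{proposition:autaction}(3) is tailor-made for this situation: it asserts precisely that the $\Aut(F_n)$-action on $\Quotient_n$ restricts to an action of $(\Aut(F_n))_{\Conj{x_n}}$ on $(\Quotient_n)_{\Conj{x_n}}$, which is what is needed to lift the endomorphisms of the L-presentation. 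For the base relations: each defining relation of $\BirKer_{n-1,1}$ is a word $w$ in our chosen generators that is trivial in $\IA_n$, and I need to check that $w$ is also trivial in $\Quotient_n$. This is verified by exhibiting, for each base relation, an explicit derivation of $w = 1$ from the relations in $R_{\IA}^0(n)$ together with their translates under the monoid generated by $E_{\IA}(n)$.

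The main obstacle is this last step: the L-presentation $\LPres{S_{\IA}(n)}{R_{\IA}^0(n)}{E_{\IA}(n)}$ must be rich enough to entail all of the base relations of the \cite{DayPutmanBirmanIA} presentation of $\BirKer_{n-1,1}$, and this is ultimately the reason that so many relations appear in Table \ref{table:basicrelsian}. The verifications are mechanical but numerous, so rather than carry them out by hand I would defer them to the computer routine in \verb+h2ia.g+, parallel to how Propositions \ref{proposition:verifyrels} and \ref{proposition:verifyendomorphisms} are handled in \S \ref{section:somecomputations}.
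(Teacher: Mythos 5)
Your general strategy — map each generator of $\BirKer_{n-1,1}$ to the generator of the same name in $\Quotient_n$, observe that commutativity of the triangle is then automatic, and reduce to verifying that the defining relations of $\BirKer_{n-1,1}$ die in $\Quotient_n$ — is the same as the paper's, and you correctly identify that the heart of the matter is exploiting the L-presentation structure of $\BirKer_{n-1,1}$ from \cite{DayPutmanBirmanIA}. However, there is a genuine gap in how you handle the substitution rules. You claim that Proposition~\ref{proposition:autaction}(3) is ``what is needed to lift the endomorphisms of the L-presentation.'' It is not: that proposition only tells you that the endomorphisms $\ttheta(s)$ of $\Quotient_n$, for $s\in(\Aut(F_n))_{\Conj{x_n}}$, carry $(\Quotient_n)_{\Conj{x_n}}$ into itself. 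What you actually need is a compatibility between two \emph{different} systems of substitution rules: the rules $\tphi(s)\in\End(F(S_{\mathcal K}(n)))$ that define the L-presentation of $\BirKer_{n-1,1}$, and the rules $\ttheta(s)\in\End(F(S_{\IA}(n)))$ that define the L-presentation of $\IA_n$. Concretely, one must check that for every $s\in(S_{\Aut}(n))_{\Conj{x_n}}$ and every $t\in S_{\mathcal K}(n)$, the words $\tphi(s)(t)$ and $\ttheta(s)(t)$ become equal in $\Quotient_n$ after applying the relations. Both $\tphi(s)$ and $\ttheta(s)$ do realize conjugation by $s$ once pushed down to $\IA_n$, but that only forces agreement up to $\Ker(\pi)$ — and $\Ker(\pi)$ being trivial is exactly what the whole argument is trying to prove, so you cannot appeal to it here. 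Proposition~\ref{proposition:autaction}(3) provides the codomain for the intertwining, not the intertwining itself.

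Once you know (a) the basic relations of $\BirKer_{n-1,1}$ hold in $\Quotient_n$ and (b) $\tphi(s)(t)=\ttheta(s)(t)$ in $\Quotient_n$ on generators, a straightforward induction on the length of $w$ in the substitution monoid shows that every extended relation $\tphi(w)(r)$ is trivial in $\Quotient_n$ (since $\ttheta$ preserves the normal closure of $\Quotient_n$'s relations by construction). Both (a) and (b) are concrete word-by-word verifications; the paper carries them out by computer in the lists \verb+kfromialist+ and \verb+thetavsphlist+ respectively. You have correctly flagged (a) as needing a computer check, but you need to add (b) as a second, independent computer check rather than deducing it from Proposition~\ref{proposition:autaction}.
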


Usually finding a homomorphism between groups given by presentations is simple: one checks that the relations map to products of conjugates of relations.
This is the spirit of the proof of Proposition~\ref{proposition:mapfromkernel}, but the substitution rules and extended relations complicate the picture.
Our proof of Proposition \ref{proposition:mapfromkernel} is computer-assisted and is postponed until \S \ref{section:somecomputations}.

\section{Verification of our L-presentation}
\label{section:theproof}

In this section, we prove Theorem \ref{maintheorem:finitelpres}, which says that
$\IA_n$ has the finite L-presentation $\LPres{S_{\IA}(n)}{R_{\IA}^0(n)}{E_{\IA}(n)}$
discussed in \S \ref{section:thepresentation}.  Our
proof is inspired by the proof of the main theorem of \cite{BrendleMargalitPutman}.  We will make use of
Propositions \ref{proposition:verifyrels}, \ref{proposition:verifyendomorphisms},
\ref{proposition:autaction}, and~\ref{proposition:mapfromkernel}, which were all stated in previous sections
and which will be proved (with the aid of a computer) in \S \ref{section:somecomputations}.

\begin{proof}[{Proof of Theorem \ref{maintheorem:finitelpres}}]
As notation, let $\Quotient_n$ be the group given by $\LPres{S_{\IA}(n)}{R_{\IA}^0(n)}{E_{\IA}(n)}$.  Elements
of $S_{\IA}(n)$ play dual roles as elements of $\Quotient_n$ and as elements of $\IA_n$, and during our
proof it will be important to distinguish them.  Therefore, throughout this proof elements
$\Con{x_a}{x_b}$ and $\Mulcomm{x_a^{\alpha}}{x_b^{\beta}}{x_c^{\gamma}}$ will always lie in $\IA_n$; the associated elements
of $\Quotient_n$ will be denoted $\QCon{x_a}{x_b}$ and $\QMulcomm{x_a^{\alpha}}{x_b^{\beta}}{x_c^{\gamma}}$.

There is a natural projection map $\pi\colon \Quotient_n \rightarrow \IA_n$.  We will prove that $\pi$
is an isomorphism by induction on $n$.
The base cases are $n=1$ and $n=2$.  For $n=1$, both
$\IA_n$ and $\Quotient_n$ are the trivial group, so there is nothing to prove.  For $n=2$, it is a classical
theorem of Nielsen (\cite{NielsenIA2}; see also \cite[Proposition 4.5]{LyndonSchupp}) that $\IA_2$
is the group of inner automorphisms of $F_2$, so $\IA_2$ is a free group on the
generators $\Con{x_1}{x_2}$ and $\Con{x_2}{x_1}$.  
Our generating set for $\Quotient_2$ is $\{\QCon{x_1}{x_2}, \QCon{x_2}{x_1}\}$, and for $n=2$, the set of basic relations $R^0_{\IA}(2)$ is empty.
Even though our set of substitution rules $E_{\IA}(2)$ is nonempty, it follows that our full set of relations for $\Quotient_2$ is empty.
So our presentation for
$\Quotient_2$ is $\Pres{\QCon{x_1}{x_2}, \QCon{x_2}{x_1}}{\emptyset}$, and the result
is also true in this case.

Assume now that $n \geq 3$ and that the projection map $\Quotient_{n'} \rightarrow \IA_{n'}$ is
an isomorphism for all $1 \leq n' < n$.  Since $\pi$ is a surjection,
to prove that $\pi$ is an isomorphism it is enough to construct a homomorphism $\phi \colon \IA_n \rightarrow \Quotient_n$
such that $\phi \circ \pi = \text{id}$.  Propositions \ref{proposition:abasescon} and \ref{proposition:abasesiacon} 
show that the action of $\IA_n$ on $\ABases_n$ satisfies the conditions of Theorem \ref{theorem:presentation}, so
\[\IA_n \cong \left(\BigFreeProd_{\Conj{z} \in (\ABases_n)^{(0)}} (\IA_n)_{\Conj{z}}\right)/R,\]
where $R$ is the normal closure of the edge and conjugation relators.  The construction of $\phi$
will have two steps.  First, we will use 
the action of $\Aut(F_n)$ on $\Quotient_n$ provided by Proposition \ref{proposition:autaction} to construct
a map
\[\widetilde{\phi} \colon \BigFreeProd_{\Conj{z} \in (\ABases_n)^{(0)}} (\IA_n)_{\Conj{z}} \longrightarrow \Quotient_n.\]
Second, we will show that $\widetilde{\phi}$ takes the edge and conjugation relators to $1$, and thus
induces a map $\phi \colon \IA_n \rightarrow \Quotient_n$.  We will close by verifying that $\phi \circ \pi = \text{id}$.

\paragraph{Construction of $\widetilde{\phi}$.}
To construct $\widetilde{\phi}$, we must construct a map
\[\widetilde{\phi}_{\Conj{z}} \colon (\IA_n)_{\Conj{z}} \longrightarrow \Quotient_n\]
for each vertex $\Conj{z}$ of $\ABases_n$.  Recalling that $\{x_1,\ldots,x_n\}$ is our fixed
free basis for $F_n$, we begin with the vertex $\Conj{x_n}$.  In the following claim,
we will use the notation $(S_{\IA}(n))_{\Conj{x_n}}$ and $(\Quotient_n)_{\Conj{x_n}}$ introduced
in \S \ref{section:autaction}.

\BeginClaims
\begin{claims}
\label{claim:phixn}
The restriction of $\pi$ to $(\Quotient_n)_{\Conj{x_n}}$ is an isomorphism onto $(\IA_n)_{\Conj{x_n}}$.
\end{claims}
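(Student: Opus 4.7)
The plan is to construct a homomorphism $\phi_{\Conj{x_n}}\colon (\IA_n)_{\Conj{x_n}} \to (\Quotient_n)_{\Conj{x_n}}$ that is a two-sided inverse to $\pi|_{(\Quotient_n)_{\Conj{x_n}}}$. Surjectivity of the latter is immediate from Proposition \ref{proposition:stabilizergenerators} applied with $k=1$: the set $(S_{\IA}(n))_{\Conj{x_n}}$ generates $(\IA_n)_{\Conj{x_n}}$, and each of its elements is by definition the image under $\pi$ of its counterpart in $(\Quotient_n)_{\Conj{x_n}}$. So the real work is to produce $\phi_{\Conj{x_n}}$ and to check that it sends each generator in $(S_{\IA}(n))_{\Conj{x_n}}$ back to the corresponding element of $(\Quotient_n)_{\Conj{x_n}}$; injectivity of $\pi|_{(\Quotient_n)_{\Conj{x_n}}}$ will then be automatic.

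To build $\phi_{\Conj{x_n}}$, I use the semidirect product decomposition $(\IA_n)_{\Conj{x_n}} = \BirKer_{n-1,1} \rtimes \IA_{n-1}$ coming from \cite{DayPutmanBirmanIA} and handle the two factors separately. Proposition \ref{proposition:mapfromkernel} directly supplies a homomorphism $\phi_1\colon \BirKer_{n-1,1} \to \Quotient_n$ compatible with $\pi$, and an inspection of the generators of $\BirKer_{n-1,1}$ from Proposition \ref{proposition:stabilizergenerators} confirms that $\phi_1$ lands inside $(\Quotient_n)_{\Conj{x_n}}$. For the $\IA_{n-1}$ factor, the induction hypothesis gives an isomorphism $\Quotient_{n-1} \cong \IA_{n-1}$, whose inverse I compose with a ``natural inclusion'' $\Quotient_{n-1} \to \Quotient_n$ sending each generator indexed by $\{1,\ldots,n-1\}$ to its counterpart in $\Quotient_n$. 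This inclusion is well-defined because $R^0_{\IA}(n-1) \subseteq R^0_{\IA}(n)$ and because the substitution rules $\ttheta(s)$ for $s \in S_{\Aut}(n-1)$ restrict consistently on generators indexed by $\{1,\ldots,n-1\}$, so every extended relation of the L-presentation of $\Quotient_{n-1}$ is also an extended relation of $\Quotient_n$. The resulting homomorphism $\phi_0\colon \IA_{n-1} \to (\Quotient_n)_{\Conj{x_n}}$ satisfies $\pi \circ \phi_0 = \mathrm{id}$.

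The main obstacle is fusing $\phi_0$ and $\phi_1$ into a homomorphism on the semidirect product, which reduces to the equivariance
\[
\phi_1(\alpha \beta \alpha^{-1}) \;=\; \phi_0(\alpha)\, \phi_1(\beta)\, \phi_0(\alpha)^{-1} \qquad (\alpha \in \IA_{n-1},\ \beta \in \BirKer_{n-1,1}).
\]
Both sides project under $\pi$ to $\alpha \beta \alpha^{-1}$, so the content is equality inside $\Quotient_n$ and not merely inside $\IA_n$. Here Proposition \ref{proposition:autaction} is essential: part~(2) identifies the right-hand side with the $\Aut(F_n)$-action of $\alpha \in \IA_n$ on $\phi_1(\beta)$, and part~(3) combined with $\IA_{n-1} \subseteq (\Aut(F_n))_{\Conj{x_n}}$ guarantees this action preserves $(\Quotient_n)_{\Conj{x_n}}$. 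It will then suffice to verify the equivariance on the generators of $\BirKer_{n-1,1}$ supplied by \cite{DayPutmanBirmanIA}, which can be done by direct comparison with the formulas for $\ttheta$ in Table \ref{table:thetadef}, exploiting that the L-presentation of $\BirKer_{n-1,1}$ in \cite{DayPutmanBirmanIA} was designed to be equivariant under precisely this action. Once $\phi_{\Conj{x_n}}$ is defined on the full semidirect product, $\pi \circ \phi_{\Conj{x_n}} = \mathrm{id}$ follows immediately from the corresponding identities for $\phi_0$ and $\phi_1$, $\phi_{\Conj{x_n}} \circ \pi$ fixes each generator of $(\Quotient_n)_{\Conj{x_n}}$ by construction, and the claim follows.
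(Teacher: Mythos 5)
Your proposal is correct in outline and reaches the same conclusion, but it takes a genuinely different route from the paper. The paper does not build an explicit section over the semidirect product $(\IA_n)_{\Conj{x_n}} = \BirKer_{n-1,1} \rtimes \IA_{n-1}$; instead it defines $\mathfrak{K}_{n-1,1}$ to be the kernel of the composition $(\Quotient_n)_{\Conj{x_n}} \to \IA_{n-1} \cong \Quotient_{n-1}$, shows that $\pi$ restricts to an isomorphism $\mathfrak{K}_{n-1,1} \cong \BirKer_{n-1,1}$ (using surjectivity on generators together with Proposition~\ref{proposition:mapfromkernel} to furnish an inverse), and then applies the five-lemma to a commuting ladder of short exact sequences. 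What the five-lemma buys is that one never needs to fuse the two partial inverses together: the equivariance identity $\phi_1(\alpha\beta\alpha^{-1}) = \phi_0(\alpha)\,\phi_1(\beta)\,\phi_0(\alpha)^{-1}$ that you must verify is simply routed around. What your route buys is an explicit inverse, which is conceptually appealing.

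You should, however, be clear about what your deferred steps cost. The ``natural inclusion'' $\Quotient_{n-1}\to\Quotient_n$ does exist (the relations $R^0_{\IA}(n-1)$ sit inside $R^0_{\IA}(n)$, and inspecting Table~\ref{table:thetadef} shows that $\ttheta(s)(t)$ only involves the basis letters already appearing in $s$ and $t$, so the substitution rules restrict), but you should say that this is a lemma you are invoking, not something that is automatic from the definitions. More seriously, the equivariance check is not merely ``direct comparison with Table~\ref{table:thetadef}''; the content is that the rewriting rules $\tphi$ of the L-presentation of $\BirKer_{n-1,1}$ from \cite{DayPutmanBirmanIA} agree, modulo the relations of $\Quotient_n$, with $\ttheta$ when both are restricted to $S_{\mathcal K}(n)$ --- and this is precisely the computer-verified step~(2) in the paper's proof of Proposition~\ref{proposition:mapfromkernel} (the list \verb+thetavsphlist+). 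Once you cite that, the equivariance follows on generators and then propagates to all of $\IA_{n-1}$ and $\BirKer_{n-1,1}$ by an induction on word length; this is the missing bridge in your argument. With that bridge supplied, your proof is complete and essentially dual to the paper's.
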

\begin{proof}[Proof of claim]
Proposition~\ref{proposition:stabilizergenerators} implies that natural map
$\pi|_{(\Quotient_n)_{\Conj{x_n}}} \co (\Quotient_n)_{\Conj{x_n}} \rightarrow (\IA_n)_{\Conj{x_n}}$ is surjective, since the generators from that proposition (with $k=1$) are in the image.

Our inductive hypothesis says that the map $\pi|_{\Quotient_{n-1}}\colon \Quotient_{n-1} \rightarrow \IA_{n-1}$
is an isomorphism.  Recall from \S \ref{section:Lpreshom} that
$(\IA_n)_{\Conj{x_n}} = \BirKer_{n-1,1} \rtimes \IA_{n-1}$, where the projection
$(\IA_n)_{\Conj{x_n}} \rightarrow \IA_{n-1}$ is the one induced by the map $F_n \rightarrow F_{n-1}$ that quotients
out by the normal closure of $x_n$, and $\BirKer_{n-1,1}$ is the kernel of this projection.  The composition
\[(\Quotient_n)_{\Conj{x_n}} \longrightarrow (\IA_n)_{\Conj{x_n}} \longrightarrow \IA_{n-1} 
\overset{\pi|_{\Quotient_{n-1}}^{-1}}{\underset{\cong}\longrightarrow} \Quotient_{n-1}\]
is a well defined homomorphism.
It is a composition of surjective maps, and is therefore surjective.
We define $\mathfrak{K}_{n-1,1}$ to be the kernel of this composition of maps.

The restriction of $\pi$ to $\mathfrak{K}_{n-1,1}$ has its image in $\BirKer_{n-1,1}$ 
since the map $(\Quotient_n)_{\Conj{x_n}}\to \Quotient_{n-1}$ factors through $(\IA_n)_{\Conj{x_n}}\to \IA_{n-1}$.
Proposition \ref{proposition:stabilizergenerators} says that $\BirKer_{n-1,1}$ is generated by the set
\begin{align*}
S_{\BirKer}(n) := &\Set{$\Con{x_n}{x_a}, \Con{x_a}{x_n}$}{$1 \leq a < b$} \\
& \cup \Set{$\Mulcomm{x_a^{\alpha}}{x_b^{\beta}}{x_n^{\gamma}}, \Mulcomm{x_a^{\alpha}}{x_n^{\gamma}}{x_b^{\beta}}$}{$1 \leq a,b < n$ distinct, $\alpha,\beta,\gamma \in \{\pm 1\}$}.
\end{align*}
Since these generators are contained in $\mathfrak{K}_{n-1,1}$, the map
$\mathfrak{K}_{n-1,1}\to\BirKer_{n-1,1}$ is surjective.
Further, Proposition~\ref{proposition:mapfromkernel} gives us a left inverse to $\pi|_{\mathfrak{K}_{n-1,1}}$.
We conclude that $\pi|_{\mathfrak{K}_{n-1,1}}$ is an isomorphism $\mathfrak{K}_{n-1,1}\cong  \BirKer_{n-1,1}$.
We note that existence of this isomorphism is a deceptively difficult part of the proof, and it is the main consequence that we draw from~\cite{DayPutmanBirmanIA}.

  Summing up,
we have a commutative diagram of short exact sequences as follows.
\[\begin{CD}
1 @>>> \mathfrak{K}_{n-1,1} @>>> (\Quotient_n)_{\Conj{x_n}} @>>> \Quotient_{n-1} @>>> 1 \\
@.     @VV{\cong}V         @VVV                            @VV{\cong}V          @.\\
1 @>>> \BirKer_{n-1,1}  @>>> (\IA_n)_{\Conj{x_n}}       @>>> \IA_{n-1}       @>>> 1
\end{CD}\]
The five-lemma therefore says that the projection map $(\Quotient_n)_{\Conj{x_n}} \rightarrow (\IA_n)_{\Conj{x_n}}$
is an isomorphism, as desired.
\end{proof}

Claim \ref{claim:phixn} implies that we can define a map 
$\widetilde{\phi}_{\Conj{x_n}} \colon (\IA_n)_{\Conj{x_n}} \rightarrow \Quotient_n$ via the formula
$\widetilde{\phi}_{\Conj{x_n}} = (\pi|_{(\Quotient_n)_{\Conj{x_n}}})^{-1}$.

Now consider a general vertex $\Conj{z}$ of $\ABases_n$.  Here we will use the action
of $\Aut(F_n)$ on $\Quotient_n$ provided by Proposition \ref{proposition:autaction}.
The group $\Aut(F_n)$ acts transitively
on the set of primitive elements of $F_n$, so there exists some $\nu \in \Aut(F_n)$ such that
$\nu(x_n) = z$.  We then define a map $\widetilde{\phi}_{\Conj{z}} \colon (\IA_n)_{\Conj{z}} \rightarrow \Quotient_n$
via the formula
\[\widetilde{\phi}_{\Conj{z}}(\eta) = \nu \cdot \widetilde{\phi}_{\Conj{x_n}}(\nu^{-1} \eta \nu) \quad \quad (\eta \in (\IA_n)_{\Conj{z}}).\]
This appears to depend on the choice of $\nu$, but the following claim says that this choice does not
matter.

\begin{claims}
\label{claim:nuindep}
The map $\widetilde{\phi}_{\Conj{z}}(\eta)$ does not depend on the choice of $\nu$.
\end{claims}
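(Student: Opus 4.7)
The plan is to reduce the claim to an equality inside the stabilizer $(\Quotient_n)_{\Conj{x_n}}$ and then invoke the injectivity provided by Claim~\ref{claim:phixn}. Suppose $\nu_1, \nu_2 \in \Aut(F_n)$ both satisfy $\nu_i(x_n) = z$. Setting $\mu = \nu_1^{-1} \nu_2$, we have $\mu(x_n) = x_n$, so in particular $\mu \in (\Aut(F_n))_{\Conj{x_n}}$. The desired equality
\[\nu_1 \cdot \widetilde{\phi}_{\Conj{x_n}}(\nu_1^{-1}\eta\nu_1) \;=\; \nu_2 \cdot \widetilde{\phi}_{\Conj{x_n}}(\nu_2^{-1}\eta\nu_2)\]
becomes, after substituting $\nu_2 = \nu_1\mu$ and writing $\eta' = \nu_1^{-1}\eta\nu_1 \in (\IA_n)_{\Conj{x_n}}$, the statement
\[\widetilde{\phi}_{\Conj{x_n}}(\eta') \;=\; \mu \cdot \widetilde{\phi}_{\Conj{x_n}}(\mu^{-1}\eta'\mu).\]
So it suffices to verify this identity for every $\mu \in (\Aut(F_n))_{\Conj{x_n}}$ and every $\eta' \in (\IA_n)_{\Conj{x_n}}$.

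The next step is to observe that both sides of this equation lie in the subgroup $(\Quotient_n)_{\Conj{x_n}}$: the left-hand side does by the very definition of $\widetilde{\phi}_{\Conj{x_n}}$, and the right-hand side does by property (3) of Proposition~\ref{proposition:autaction}, which guarantees that the action of $\mu \in (\Aut(F_n))_{\Conj{x_n}}$ preserves $(\Quotient_n)_{\Conj{x_n}}$.

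I would then apply $\pi$ to both sides. The left-hand side maps to $\eta'$ by construction of $\widetilde{\phi}_{\Conj{x_n}}$. For the right-hand side, I would use the compatibility between $\pi$ and the $\Aut(F_n)$-action on $\Quotient_n$: the defining property \eqref{eqn:thetakey} of $\ttheta$ on generators extends (because the action is defined through the induced endomorphisms via property (1) of Proposition~\ref{proposition:autaction}) to the identity $\pi(\mu \cdot q) = \mu \,\pi(q)\, \mu^{-1}$ for all $\mu \in \Aut(F_n)$ and $q \in \Quotient_n$. Applying this with $q = \widetilde{\phi}_{\Conj{x_n}}(\mu^{-1}\eta'\mu)$ yields $\mu(\mu^{-1}\eta'\mu)\mu^{-1} = \eta'$. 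Thus both sides map to $\eta'$ under $\pi$, and since both lie in $(\Quotient_n)_{\Conj{x_n}}$, on which $\pi$ restricts to the isomorphism of Claim~\ref{claim:phixn}, they must coincide.

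The main obstacle is not in the argument itself, which is essentially forced once the three properties of Proposition~\ref{proposition:autaction} are combined with Claim~\ref{claim:phixn}; rather, the work has already been done upstream in verifying those properties. The one point that requires a moment's care is promoting the equivariance $\pi\circ e(s) = (\text{conj by } s)\circ \pi$ on the level of generators $s \in S_{\Aut}(n)$ to the global equivariance $\pi(\mu \cdot q) = \mu\pi(q)\mu^{-1}$ for arbitrary $\mu \in \Aut(F_n)$, which follows because the action on $\Quotient_n$ is built from the $e(s)$ and both sides are multiplicative in $\mu$.
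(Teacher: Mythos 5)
Your proposal is correct and follows essentially the same route as the paper's proof: reduce to the identity $\widetilde{\phi}_{\Conj{x_n}}(\mu\omega\mu^{-1}) = \mu\cdot\widetilde{\phi}_{\Conj{x_n}}(\omega)$ for $\mu$ stabilizing $\Conj{x_n}$, note both sides live in $(\Quotient_n)_{\Conj{x_n}}$ (using part (3) of Proposition~\ref{proposition:autaction}), and then apply the injectivity of $\pi$ on that subgroup from Claim~\ref{claim:phixn} after checking the images under $\pi$ agree via the equivariance $\pi(\mu\cdot q)=\mu\pi(q)\mu^{-1}$.
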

\begin{proof}[Proof of claim]
Assume that $\nu_1,\nu_2 \in \Aut(F_n)$ both satisfy $\nu_i(x_n) = z$, and consider some
$\eta \in (\IA_n)_{\Conj{z}}$.  Our goal is to prove that
\begin{equation}
\label{eqn:nuindep1}
\nu_1 \cdot \widetilde{\phi}_{\Conj{x_n}}(\nu_1^{-1} \eta \nu_1) = \nu_2 \cdot \widetilde{\phi}_{\Conj{x_n}}(\nu_2^{-1} \eta \nu_2).
\end{equation}
Define $\mu = \nu_1^{-1} \nu_2$ and $\omega = \nu_2^{-1} \eta \nu_2$, so $\mu \in (\Aut(F_n))_{\Conj{x_n}}$ and
$\omega \in (\IA_n)_{\Conj{x_n}}$.  We will first prove that
\begin{equation}
\label{eqn:nuindep2}
\widetilde{\phi}_{\Conj{x_n}}(\mu \omega \mu^{-1}) = \mu \cdot \widetilde{\phi}_{\Conj{x_n}}(\omega).
\end{equation}
To see this, observe first that by construction both $\widetilde{\phi}_{\Conj{x_n}}(\mu \omega \mu^{-1})$
and $\widetilde{\phi}_{\Conj{x_n}}(\omega)$ lie in $(\Quotient_n)_{\Conj{x_n}}$.  The third part
of Proposition \ref{proposition:autaction} implies that $\mu \cdot \widetilde{\phi}_{\Conj{x_n}}(\omega)$
also lies in $(\Quotient_n)_{\Conj{x_n}}$.  Claim \ref{claim:phixn} says that
$\pi|_{(\Quotient_n)_{\Conj{x_n}}}$ is injective, so to prove \eqref{eqn:nuindep2}, it is thus 
enough to prove that $\widetilde{\phi}_{\Conj{x_n}}(\mu \omega \mu^{-1})$ and 
$\mu \cdot \widetilde{\phi}_{\Conj{x_n}}(\omega)$ have the same image under $\pi$.  This follows from
the calculation
\[\pi(\widetilde{\phi}_{\Conj{x_n}}(\mu \omega \mu^{-1})) = \mu \omega \mu^{-1} = \mu \pi(\widetilde{\phi}_{\Conj{x_n}}(\omega)) \mu^{-1} = \pi(\mu \cdot \widetilde{\phi}_{\Conj{x_n}}(\omega)),\]
where the first two equalities follow from the fact that $\pi \circ \widetilde{\phi}_{\Conj{x_n}} = \text{id}$ and
the third follows from the first conclusion of Proposition \ref{proposition:autaction}.

We now verify \eqref{eqn:nuindep1} as follows:
\[\nu_1 \cdot \widetilde{\phi}_{\Conj{x_n}}(\nu_1^{-1} \eta \nu_1) = \nu_1 \cdot \widetilde{\phi}_{\Conj{x_n}}(\mu \omega \mu^{-1}) = \nu_1 \mu \cdot \widetilde{\phi}_{\Conj{x_n}}(\omega) = \nu_2 \cdot \widetilde{\phi}_{\Conj{x_n}}(\nu_2^{-1} \eta \nu_2). \qedhere\]
\end{proof}

This completes the construction of $\widetilde{\phi}$.

\paragraph{Some naturality properties.}
Before we study the edge and conjugation relators, we first need to verify the following two
naturality properties of $\widetilde{\phi}$.  Starting now we will
use the following notation which was introduced in \S \ref{section:thepresentation}: for a vertex
$\Conj{z}$ of $\ABases_n$ and $\eta \in \IA_n$ satisfying $\eta(\Conj{z}) = \Conj{z}$, we will denote
$\eta$ considered as an element of
\[(\IA_n)_{\Conj{z}} < \BigFreeProd_{\Conj{z} \in (\ABases_n)^{(0)}} (\IA_n)_{\Conj{z}}\]
by $\eta_{\Conj{z}}$.

\begin{claims}
\label{claim:respectgen}
The following two identities hold.
\begin{compactitem}
\item Let $1 \leq a,b \leq n$ be distinct and $1 \leq i \leq n$ be arbitrary.  Then
$\widetilde{\phi}((\Con{x_a}{x_b})_{\Conj{x_{i}}}) = \QCon{x_a}{x_b}$.
\item Let $1 \leq a,b,c \leq n$ be distinct, let $\alpha,\beta,\gamma \in \{\pm 1\}$ be arbitrary, and
let $1 \leq i \leq n$ be such that $i \neq a$.  Then
$\widetilde{\phi}((\Mulcomm{x_a^{\alpha}}{x_b^{\beta}}{x_c^{\gamma}})_{\Conj{x_i}}) = \QMulcomm{x_a^{\alpha}}{x_b^{\beta}}{x_c^{\gamma}}$.
\end{compactitem}
\end{claims}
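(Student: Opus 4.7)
The plan is to reduce both identities to the case $i = n$, where they follow immediately from the construction of $\widetilde{\phi}_{\Conj{x_n}}$. When $i = n$, both $\Con{x_a}{x_b}$ and $\Mulcomm{x_a^{\alpha}}{x_b^{\beta}}{x_c^{\gamma}}$ (using $a \neq n$ as hypothesized in the commutator case) lie in $(S_{\IA}(n))_{\Conj{x_n}}$. Since $\widetilde{\phi}_{\Conj{x_n}}$ is by construction the inverse of $\pi|_{(\Quotient_n)_{\Conj{x_n}}}$, and $\pi$ maps $\QCon{x_a}{x_b}$ to $\Con{x_a}{x_b}$ and $\QMulcomm{x_a^{\alpha}}{x_b^{\beta}}{x_c^{\gamma}}$ to $\Mulcomm{x_a^{\alpha}}{x_b^{\beta}}{x_c^{\gamma}}$ by definition, both identities hold immediately.

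For $i \neq n$, I would take $\nu := P_{i,n} \in S_{\Aut}(n)$, which satisfies $\nu(x_n) = x_i$. By Claim \ref{claim:nuindep}, I may compute $\widetilde{\phi}_{\Conj{x_i}}(\eta)$ as $\nu \cdot \widetilde{\phi}_{\Conj{x_n}}(\nu^{-1} \eta \nu)$. Letting $\sigma$ denote the transposition of $i$ and $n$, a direct computation in $\Aut(F_n)$ gives
\[\nu^{-1} \Con{x_a}{x_b} \nu = \Con{x_{\sigma(a)}}{x_{\sigma(b)}} \quad \text{and} \quad \nu^{-1} \Mulcomm{x_a^{\alpha}}{x_b^{\beta}}{x_c^{\gamma}} \nu = \Mulcomm{x_{\sigma(a)}^{\alpha}}{x_{\sigma(b)}^{\beta}}{x_{\sigma(c)}^{\gamma}}.\]
In the commutator case the hypothesis $i \neq a$ guarantees $\sigma(a) \neq n$, so the conjugated generator still lies in $(S_{\IA}(n))_{\Conj{x_n}}$ and the base case applies, sending it to $\QCon{x_{\sigma(a)}}{x_{\sigma(b)}}$ or $\QMulcomm{x_{\sigma(a)}^{\alpha}}{x_{\sigma(b)}^{\beta}}{x_{\sigma(c)}^{\gamma}}$ respectively. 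Finally, part (1) of Proposition \ref{proposition:autaction} says the action of $\nu$ on $\Quotient_n$ agrees with the induced endomorphism $\ttheta(\nu)$, which by its definition on swap automorphisms permutes indices according to $\sigma$, restoring $\QCon{x_a}{x_b}$ or $\QMulcomm{x_a^{\alpha}}{x_b^{\beta}}{x_c^{\gamma}}$.

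I do not anticipate a serious obstacle here: the entire argument is a bookkeeping check verifying that the obvious index-permutation action on the generators of $\Quotient_n$ matches the corresponding conjugation action on generators of $\IA_n$ by a swap automorphism. The one delicate point is ensuring the conjugated commutator generator remains in $(S_{\IA}(n))_{\Conj{x_n}}$, which is precisely what the hypothesis $i \neq a$ is designed to guarantee.
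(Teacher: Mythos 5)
Your proposal is correct and takes essentially the same approach as the paper: the paper also observes that the base case $i=n$ is immediate from the construction of $\widetilde{\phi}_{\Conj{x_n}}$, then for $i<n$ uses $\nu = P_{i,n}$ together with the formula $\widetilde{\phi}_{\Conj{x_i}}(\eta) = P_{i,n} \cdot \widetilde{\phi}_{\Conj{x_n}}(P_{i,n}^{-1}\eta P_{i,n})$, the index-permutation identity for conjugating generators by a swap, and part (1) of Proposition \ref{proposition:autaction} combined with the definition of $\ttheta$ on swaps. The only cosmetic difference is that you spell out both bullet points symmetrically whereas the paper treats the first identity in detail and leaves the second to the reader.
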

\begin{proof}[Proof of claim]
The proofs of two two identities are similar; we will deal with the first and leave the second to the reader.
It is clear from the construction that $\widetilde{\phi}((\Con{x_a}{x_b})_{\Conj{x_{n}}}) = \QCon{x_a}{x_b}$.
For $1 \leq i < n$, we have $P_{i,n}(x_n) = x_i$, and thus by definition we have
\begin{align*}
\widetilde{\phi}((\Con{x_a}{x_b})_{\Conj{x_{i}}}) &= P_{i,n} \cdot \widetilde{\phi}_{\Conj{x_n}}(P_{i,n}^{-1} \Con{x_a}{x_b} P_{i,n}) = P_{i,n} \cdot \widetilde{\phi}_{\Conj{x_n}}(\Con{P_{i,n}^{-1}(x_a)}{P_{i,n}^{-1}(x_b)}) \\
&= P_{i,n} \cdot \QCon{P_{i,n}^{-1}(x_a)}{P_{i,n}^{-1}(x_b)} = \QCon{x_a}{x_b};
\end{align*}
here the last equality follows from the first part of Proposition \ref{proposition:autaction} and the definition
of the endomorphisms in \S \ref{section:thepresentation}.
\end{proof}

\begin{claims}
\label{claim:partialinverse}
Let $\Conj{z}$ be a vertex of $\ABases_n$.  Then for $\eta \in (\IA_n)_{\Conj{z}}$ we have
$\pi(\widetilde{\phi}_{\Conj{z}}(\eta)) = \eta$.
\end{claims}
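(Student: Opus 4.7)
The plan is to handle the base vertex $\Conj{x_n}$ directly and then bootstrap to arbitrary vertices using the $\Aut(F_n)$-equivariance of $\pi$.

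For $\Conj{z} = \Conj{x_n}$, the identity is immediate from the definition: $\widetilde{\phi}_{\Conj{x_n}}$ was defined to be $(\pi|_{(\Quotient_n)_{\Conj{x_n}}})^{-1}$, which makes sense by Claim \ref{claim:phixn}, so $\pi \circ \widetilde{\phi}_{\Conj{x_n}} = \text{id}$ by construction.

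For a general vertex $\Conj{z}$, the plan is to pick $\nu \in \Aut(F_n)$ with $\nu(x_n) = z$, so that by Claim \ref{claim:nuindep} we have $\widetilde{\phi}_{\Conj{z}}(\eta) = \nu \cdot \widetilde{\phi}_{\Conj{x_n}}(\nu^{-1}\eta\nu)$. I would first note that $\nu^{-1}\eta\nu$ indeed lies in $(\IA_n)_{\Conj{x_n}}$: normality of $\IA_n$ in $\Aut(F_n)$ places it in $\IA_n$, and since $\eta$ fixes $\Conj{z} = \nu(\Conj{x_n})$, the conjugate $\nu^{-1}\eta\nu$ fixes $\Conj{x_n}$, so the expression $\widetilde{\phi}_{\Conj{x_n}}(\nu^{-1}\eta\nu)$ is well-defined and the base case applies to it.

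The heart of the argument is the $\Aut(F_n)$-equivariance of $\pi$: for every $\mu \in \Aut(F_n)$ and every $q \in \Quotient_n$,
\[\pi(\mu \cdot q) = \mu \, \pi(q) \, \mu^{-1}.\]
Since $\pi$ is a homomorphism, it suffices to check this on a generating set of $\Aut(F_n)$, i.e.\ on $S_{\Aut}(n)$. For $s \in S_{\Aut}(n)$, Proposition \ref{proposition:autaction}(1) gives $s \cdot q = e(s)(q)$, where $e(s)$ is the endomorphism of $\Quotient_n$ induced by $\ttheta(s) \in \End(F(S_{\IA}(n)))$; the defining property \eqref{eqn:thetakey} of $\ttheta$ (Proposition \ref{proposition:verifyendomorphisms}) then yields $\pi(e(s)(q)) = s\,\pi(q)\,s^{-1}$, as required.

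Granting the equivariance, the claim reduces to a one-line computation:
\[\pi(\widetilde{\phi}_{\Conj{z}}(\eta)) \;=\; \pi\bigl(\nu \cdot \widetilde{\phi}_{\Conj{x_n}}(\nu^{-1}\eta\nu)\bigr) \;=\; \nu\,\pi\bigl(\widetilde{\phi}_{\Conj{x_n}}(\nu^{-1}\eta\nu)\bigr)\,\nu^{-1} \;=\; \nu(\nu^{-1}\eta\nu)\nu^{-1} \;=\; \eta,\]
using equivariance in the second step and the already established base case in the third. There is no serious obstacle: the statement is essentially a formal consequence of the compatibility between the lifted $\Aut(F_n)$-action and $\pi$, together with the defining property of $\widetilde{\phi}_{\Conj{x_n}}$. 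The only subtlety worth flagging explicitly is ensuring that the $\Aut(F_n)$-equivariance extends from the generators $S_{\Aut}(n)$ to all of $\Aut(F_n)$, which is immediate because both sides of $\pi(\mu \cdot q) = \mu\pi(q)\mu^{-1}$ are homomorphic in $\mu$ for fixed $q$.
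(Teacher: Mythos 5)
Your proof is correct and takes essentially the same approach as the paper's: choose $\nu \in \Aut(F_n)$ with $\nu(x_n)=z$, then use the $\Aut(F_n)$-equivariance of $\pi$ (which follows from Proposition \ref{proposition:autaction}(1) together with equation \eqref{eqn:thetakey}) and the defining property $\pi\circ\widetilde{\phi}_{\Conj{x_n}}=\mathrm{id}$ to get the one-line computation. The paper compresses the equivariance into the citation of Proposition \ref{proposition:autaction}(1); you have merely unpacked it, including the check that $\nu^{-1}\eta\nu\in(\IA_n)_{\Conj{x_n}}$, which the paper leaves implicit.
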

\begin{proof}[Proof of claim]
Pick $\nu \in \Aut(F_n)$ such that $\nu(x_n) = z$.  Then
\[\pi(\widetilde{\phi}_{\Conj{z}}(\eta)) = \pi(\nu \cdot \widetilde{\phi}_{\Conj{x_n}}(\nu^{-1} \eta \nu)) = \nu \pi(\widetilde{\phi}_{\Conj{x_n}}(\nu^{-1} \eta \nu)) \nu^{-1} = \nu \nu^{-1} \eta \nu \nu^{-1} = \eta;\]
here the second equality uses the first part of Proposition \ref{proposition:autaction}.
\end{proof}

\paragraph{The edge and conjugation relators.}
We now check that $\widetilde{\phi}$ takes the edge and conjugation relators to $1$.

\begin{claims}[Edge relators]
\label{claim:edge}
If $e$ is an edge of $\ABases_n$ with endpoints $\Conj{z}$ and $\Conj{z'}$ and $\eta \in (\IA_n)_e$, then
$\widetilde{\phi}(\eta_{\Conj{z}} \eta_{\Conj{z'}}^{-1}) = 1$.
\end{claims}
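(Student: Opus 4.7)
The plan is to reduce to the standard edge $\{\Conj{x_n}, \Conj{x_{n-1}}\}$ using the $\Aut(F_n)$-action on $\Quotient_n$, and then verify the identity on a generating set of the corresponding stabilizer.

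Every edge of $\ABases_n$ has the form $\{\Conj{z}, \Conj{z'}\}$ for some partial basis $\{z, z'\}$: this is clear for standard edges, and for an edge of an additive $2$-simplex $\{\Conj{z_1 z_2}, \Conj{z_1}, \Conj{z_2}\}$ it follows from the fact that $\{z_1 z_2, z_1\}$ and $\{z_1 z_2, z_2\}$ are also partial bases (obtained from $\{z_1, z_2\}$ via Nielsen moves). Since $\Aut(F_n)$ acts transitively on ordered size-$2$ partial bases (use Lemma \ref{lemma:basiscompletion} to complete to full bases and then define the desired automorphism), there exist representatives $z, z'$ of the endpoints of $e$ and $\nu \in \Aut(F_n)$ with $\nu(x_n) = z$ and $\nu(x_{n-1}) = z'$. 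Setting $\eta' = \nu^{-1}\eta\nu \in (\IA_n)_{\Conj{x_{n-1}}, \Conj{x_n}}$, computing $\widetilde{\phi}_{\Conj{z}}(\eta)$ via $\nu$ and $\widetilde{\phi}_{\Conj{z'}}(\eta)$ via $\nu \circ P_{n-1,n}$ (both choices are legal by Claim \ref{claim:nuindep}), the desired identity $\widetilde{\phi}_{\Conj{z}}(\eta) = \widetilde{\phi}_{\Conj{z'}}(\eta)$ becomes
\[\nu \cdot \widetilde{\phi}_{\Conj{x_n}}(\eta') = \nu \cdot \widetilde{\phi}_{\Conj{x_{n-1}}}(\eta').\]
Since $\Aut(F_n)$ acts on $\Quotient_n$ by automorphisms (Proposition \ref{proposition:autaction}), this reduces to proving the standard-edge case $\widetilde{\phi}_{\Conj{x_n}}(\eta') = \widetilde{\phi}_{\Conj{x_{n-1}}}(\eta')$ for all $\eta' \in (\IA_n)_{\Conj{x_{n-1}}, \Conj{x_n}}$.

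For this standard-edge case, Proposition \ref{proposition:stabilizergenerators} with $k = 2$ gives a generating set for $(\IA_n)_{\Conj{x_{n-1}}, \Conj{x_n}}$ consisting of the conjugation moves $\Con{x_a}{x_b}$ and the commutator transvections $\Mulcomm{x_a}{x_b}{x_c}$ with $a \leq n-2$. Since both $\widetilde{\phi}_{\Conj{x_n}}$ and $\widetilde{\phi}_{\Conj{x_{n-1}}}$ are group homomorphisms, it suffices to compare them on each such generator. The first clause of Claim \ref{claim:respectgen} handles every conjugation move (both sides return $\QCon{x_a}{x_b}$), and for a commutator transvection the constraint $a \leq n-2$ guarantees $a \notin \{n-1, n\}$, so the hypothesis $i \neq a$ of the second clause of Claim \ref{claim:respectgen} holds at both $i = n-1$ and $i = n$; both sides therefore return $\QMulcomm{x_a}{x_b}{x_c}$. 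The genuinely delicate work has already been shouldered by Claim \ref{claim:respectgen} (which in turn rests on the first clause of Proposition \ref{proposition:autaction}); the remaining obstacle---that the twisting by $P_{n-1,n}$ in the definition of $\widetilde{\phi}_{\Conj{x_{n-1}}}$ does not disturb the answer---is precisely what this verification on Magnus generators resolves.
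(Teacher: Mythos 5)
Your proof is correct and takes essentially the same approach as the paper: you use Proposition \ref{proposition:stabilizergenerators} with $k=2$ together with Claim \ref{claim:respectgen} to verify the identity on generators of $(\IA_n)_{\Conj{x_{n-1}},\Conj{x_n}}$, and you reduce a general edge to this standard edge via an element $\nu\in\Aut(F_n)$ and the swap $P_{n-1,n}$, invoking Proposition \ref{proposition:autaction}. The only difference is presentational (you do the reduction before the generator check, the paper does them in the opposite order), plus the small extra observation that faces of additive $2$-simplices are themselves standard edges, which the paper leaves implicit.
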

\begin{proof}[Proof of claim]
We first consider the special case where $z=x_n$ and $z' = x_{n-1}$.  
Proposition~\ref{proposition:stabilizergenerators}, with $k=2$, states that 
$(\IA_n)_{\Conj{x_{n-1}},\Conj{x_n}}$ is generated by
\begin{equation}
\label{eqn:stabgen}
\Set{$\Con{x_a}{x_b}$}{$1 \leq a,b \leq n$ distinct} \cup \Set{$\Mulcomm{x_a}{x_b}{x_c}$}{$1 \leq a,b,c \leq n$ distinct, $a \neq n-1,n$}.
\end{equation}
Claim \ref{claim:respectgen} implies that for all elements $\omega$ in \eqref{eqn:stabgen}, we have
$\widetilde{\phi}(\omega_{\Conj{x_{n-1}}}) = \widetilde{\phi}(\omega_{\Conj{x_n}})$.  It follows that for all
$\eta \in (\IA_n)_{\Conj{x_{n-1}},\Conj{x_n}}$ we have 
$\widetilde{\phi}(\eta_{\Conj{x_{n-1}}}) = \widetilde{\phi}(\eta_{\Conj{x_n}})$, as desired.

We now turn to general edges $e$ with endpoints $\Conj{z}$ and $\Conj{z'}$ and $\eta \in (\IA_n)_e$.  
There exists some $\nu \in \Aut(F_n)$ such that $\nu(x_n) = z$ and $\nu(x_{n-1}) = z'$, and hence
$\nu P_{n-1,n}(x_{n}) = z'$.  Setting $\eta' = \nu^{-1} \eta \nu \in (\IA_n)_{\Conj{x_{n-1}},\Conj{x_n}}$,
we have
\[\widetilde{\phi}(\eta_{\Conj{z}}) = \nu \cdot \widetilde{\phi}_{\Conj{x_n}}(\nu^{-1} \eta \nu) = \nu \cdot \widetilde{\phi}_{\Conj{x_n}}(\eta')\]
and
\[\widetilde{\phi}(\eta_{\Conj{z'}}) = \nu P_{n-1,n} \widetilde{\phi}_{\Conj{x_n}}(P_{n-1,n}^{-1} \nu^{-1} \eta \nu P_{n-1,n}) = \nu \cdot \widetilde{\phi}_{\Conj{x_{n-1}}}(\eta').\]
By the previous paragraph, we have $\widetilde{\phi}_{\Conj{x_n}}(\eta') = \widetilde{\phi}_{\Conj{x_{n-1}}}(\eta')$, so
we conclude that $\widetilde{\phi}(\eta_{\Conj{z}}) = \widetilde{\phi}(\eta_{\Conj{z'}})$, as desired.
\end{proof}

\begin{claims}[Conjugation relators]
\label{claim:conjugation}
If $\Conj{z}$ and $\Conj{z'}$ are vertices of $\ABases_n$ and $\eta \in (\IA_n)_{\Conj{z}}$ and
$\omega \in (\IA_n)_{\Conj{z'}}$, then 
$\widetilde{\phi}(\omega_{\Conj{z'}} \eta_{\Conj{z}} \omega_{\Conj{z'}}^{-1} (\omega \eta \omega^{-1})_{\Conj{\omega(z)}})=1.$
\end{claims}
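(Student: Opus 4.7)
The goal is to show
\[\widetilde{\phi}(\omega_{\Conj{z'}}) \widetilde{\phi}(\eta_{\Conj{z}}) \widetilde{\phi}(\omega_{\Conj{z'}})^{-1} = \widetilde{\phi}((\omega \eta \omega^{-1})_{\Conj{\omega(z)}})\]
in $\Quotient_n$, so the plan is to compute both sides using the lifted $\Aut(F_n)$-action on $\Quotient_n$ from Proposition \ref{proposition:autaction}.

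First I would rewrite the left-hand side using the action. By Claim \ref{claim:partialinverse} we have $\pi(\widetilde{\phi}(\omega_{\Conj{z'}})) = \omega$, and so the second part of Proposition \ref{proposition:autaction} applied to $r = \widetilde{\phi}(\omega_{\Conj{z'}})$ and $q = \widetilde{\phi}(\eta_{\Conj{z}})$ yields
\[\widetilde{\phi}(\omega_{\Conj{z'}}) \widetilde{\phi}(\eta_{\Conj{z}}) \widetilde{\phi}(\omega_{\Conj{z'}})^{-1} = \omega \cdot \widetilde{\phi}(\eta_{\Conj{z}}).\]
This is the key reduction: the conjugation relator becomes an equivariance statement for $\widetilde{\phi}$ under the lifted action.

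Next I would unpack both sides using the definition of $\widetilde{\phi}$. Choose $\nu \in \Aut(F_n)$ with $\nu(x_n) = z$, so that by construction
\[\widetilde{\phi}(\eta_{\Conj{z}}) = \nu \cdot \widetilde{\phi}_{\Conj{x_n}}(\nu^{-1} \eta \nu).\]
Since the $\Aut(F_n)$-action on $\Quotient_n$ is a genuine action, it follows that
\[\omega \cdot \widetilde{\phi}(\eta_{\Conj{z}}) = (\omega\nu) \cdot \widetilde{\phi}_{\Conj{x_n}}(\nu^{-1} \eta \nu).\]
On the other hand, $(\omega \nu)(x_n) = \omega(z)$, so by Claim \ref{claim:nuindep} we may use $\omega\nu$ to compute $\widetilde{\phi}((\omega\eta\omega^{-1})_{\Conj{\omega(z)}})$:
\[\widetilde{\phi}((\omega \eta \omega^{-1})_{\Conj{\omega(z)}}) = (\omega\nu) \cdot \widetilde{\phi}_{\Conj{x_n}}\bigl((\omega\nu)^{-1}(\omega \eta \omega^{-1})(\omega\nu)\bigr) = (\omega\nu) \cdot \widetilde{\phi}_{\Conj{x_n}}(\nu^{-1} \eta \nu).\]
The two sides agree, completing the proof.

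The only real content is the first step, where one must recognize that Proposition \ref{proposition:autaction}(2) allows the conjugation $\widetilde{\phi}(\omega_{\Conj{z'}}) (-) \widetilde{\phi}(\omega_{\Conj{z'}})^{-1}$ inside $\Quotient_n$ to be replaced by the external action of $\omega \in \Aut(F_n)$; once this translation is made, the statement reduces to the fact that $\widetilde{\phi}_{\Conj{z}}$ was \emph{defined} to be $\Aut(F_n)$-equivariantly transported from $\widetilde{\phi}_{\Conj{x_n}}$. In particular, the vertex $\Conj{z'}$ and the element $\eta$ play no independent role beyond the identification of their images under $\pi$, and nothing needs to be said about how $\omega$ interacts with $\widetilde{\phi}_{\Conj{x_n}}$ except through the action.
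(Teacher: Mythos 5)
Your proof is correct and follows essentially the same chain of equalities as the paper: first convert the conjugation by $\widetilde{\phi}(\omega_{\Conj{z'}})$ into the lifted action of $\omega$ (via Proposition \ref{proposition:autaction}(2) together with Claim \ref{claim:partialinverse}), then unpack $\widetilde{\phi}_{\Conj{z}}(\eta)$ and $\widetilde{\phi}_{\Conj{\omega(z)}}(\omega\eta\omega^{-1})$ through $\widetilde{\phi}_{\Conj{x_n}}$ using $\nu$ and $\omega\nu$ respectively, with Claim \ref{claim:nuindep} justifying the choice $\omega\nu$. You also correctly attribute the conjugation-to-action step to part (2) of Proposition \ref{proposition:autaction}, where the published text's reference to part (3) appears to be a typo.
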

\begin{proof}[Proof of claim]
Choose $\nu \in \Aut(F_n)$ such that $\nu(x_n) = z$.  We then have
\begin{align*}
\widetilde{\phi}(\omega_{\Conj{z'}} \eta_{\Conj{z}} \omega_{\Conj{z'}}^{-1}) &= \widetilde{\phi}_{\Conj{z'}}(\omega) \widetilde{\phi}_{\Conj{z}}(\eta) \widetilde{\phi}_{\Conj{z'}}(\omega)^{-1} = \pi(\widetilde{\phi}_{\Conj{z'}}(\omega)) \cdot \widetilde{\phi}_{\Conj{z}}(\eta) = \omega \cdot \widetilde{\phi}_{\Conj{z}}(\eta) \\
&= \omega \nu \cdot \widetilde{\phi}_{\Conj{x_n}}(\nu^{-1} \eta \nu) = \omega \nu \cdot \widetilde{\phi}_{\Conj{x_n}}((\omega \nu)^{-1} \omega \eta \omega^{-1} (\omega \nu)) = \widetilde{\phi}((\omega \eta \omega^{-1})_{\Conj{\omega(z)}}),
\end{align*}
as desired.  The second equality follows from the third part of Proposition \ref{proposition:autaction}, the
third equality follows from Claim \ref{claim:partialinverse}, and the remainder of the equalities are straightforward
applications of the definitions.
\end{proof}

Claims \ref{claim:edge} and \ref{claim:conjugation} imply that $\widetilde{\phi}$ descends to a homomorphism
$\phi\colon \IA_n \rightarrow \Quotient_n$.

\paragraph{We have an inverse.}
To complete the proof, it remains to prove the following.

\begin{claims}
We have $\phi \circ \pi = \text{id}$.
\end{claims}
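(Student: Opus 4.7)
The plan is to verify $\phi \circ \pi = \text{id}_{\Quotient_n}$ by checking it on the generating set $S_{\IA}(n)$ of $\Quotient_n$. Since $\phi$ was constructed so that $\phi(\eta) = \widetilde{\phi}(\eta_{\Conj{z}})$ for any vertex $\Conj{z}$ of $\ABases_n$ fixed by $\eta \in \IA_n$, the key move is, for each generator $s \in S_{\IA}(n) \subset \Quotient_n$, to pick a convenient vertex $\Conj{z}$ fixed by $\pi(s)$ and read off $\widetilde{\phi}(\pi(s)_{\Conj{z}})$ using Claim \ref{claim:respectgen}.

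For the conjugation generator $s = \QCon{x_a}{x_b}$, the automorphism $\pi(s) = \Con{x_a}{x_b}$ fixes $x_\ell$ for $\ell \neq a$ and sends $x_a$ to its conjugate $x_b x_a x_b^{-1}$, so every vertex $\Conj{x_i}$ of $\ABases_n$ is fixed by $\pi(s)$. Taking (say) $\Conj{z} = \Conj{x_1}$ and applying the first part of Claim \ref{claim:respectgen} gives $\phi(\pi(s)) = \widetilde{\phi}((\Con{x_a}{x_b})_{\Conj{x_1}}) = \QCon{x_a}{x_b} = s$. For the commutator transvection generator $s = \QMulcomm{x_a^\alpha}{x_b^\beta}{x_c^\gamma}$, the automorphism $\pi(s)$ still fixes $x_\ell$ for $\ell \neq a$ but generally does not preserve $\Conj{x_a}$; however, it does fix $\Conj{x_i}$ for every $i \neq a$, and since $n \geq 3$ such an $i$ exists. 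Choosing such an $i$ and applying the second part of Claim \ref{claim:respectgen} yields $\phi(\pi(s)) = \widetilde{\phi}((\Mulcomm{x_a^\alpha}{x_b^\beta}{x_c^\gamma})_{\Conj{x_i}}) = \QMulcomm{x_a^\alpha}{x_b^\beta}{x_c^\gamma} = s$.

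Since $\phi \circ \pi$ is a homomorphism that agrees with the identity on the generating set $S_{\IA}(n)$, it is the identity on all of $\Quotient_n$, which completes the proof that $\pi$ is an isomorphism and hence finishes the inductive step. There is no serious obstacle at this final stage: all of the genuine difficulty has been absorbed into Claim \ref{claim:respectgen} (which reduces the question of computing $\widetilde{\phi}_{\Conj{x_i}}$ on generators to the $\Conj{x_n}$ case via the $\Aut(F_n)$-action) and into Claims \ref{claim:phixn}--\ref{claim:partialinverse}, which themselves rest on the inductive hypothesis together with Propositions \ref{proposition:autaction} and \ref{proposition:mapfromkernel}.
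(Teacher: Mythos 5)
Your proposal is correct and is exactly the expansion of the paper's one-line proof: the paper invokes Claim \ref{claim:respectgen} to verify $\phi \circ \pi = \text{id}$ on the generating set $S_{\IA}(n)$, and you have spelled out the routine vertex-choice that makes Claim \ref{claim:respectgen} applicable (any $\Conj{x_i}$ for a conjugation move, any $\Conj{x_i}$ with $i \neq a$ for a commutator transvection, the latter existing since $n \geq 3$ in the inductive step).
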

\begin{proof}[Proof of claim]
Claim \ref{claim:respectgen} implies that this holds for the generators of $\Quotient_n$.
\end{proof}

This completes the proof of Theorem \ref{maintheorem:finitelpres}.
\end{proof}

\section{Computations for the L-presentation}
\label{section:somecomputations}

This section contains the postponed proofs of Propositions \ref{proposition:verifyrels}, \ref{proposition:verifyendomorphisms},
\ref{proposition:autaction}, and~\ref{proposition:mapfromkernel}.  These proofs are done with the
aid of a computer.  We will discuss our computational framework in \S \ref{section:computationalframework} and
then prove the propositions in \S \ref{section:maptoIAn} -- \ref{section:mapcomputations}.

We will use the following notation throughout the rest of the paper.  Let
$S_{\Aut}(n)^{\ast}$ denote the free monoid on the set $S_{\Aut}(n)$.  In \S \ref{section:thepresentation},
we defined a function $\theta\colon S_{\Aut}(n) \rightarrow \End(F(S_{\IA}(n)))$.  This
naturally extends to a function $\theta\colon S_{\Aut}(n)^{\ast} \rightarrow \End(F(S_{\IA}(n)))$.

\subsection{Computational framework}
\label{section:computationalframework}

As we discussed in the introduction, 
we use the GAP System to mechanically verify the large number of equations we have to check.
These verifications are in the file \verb+h2ia.g+, distributed with this document and also available at the authors' websites.

We use GAP's built-in functionality to model $F_n$ as a free group on the eight generators \verb+xa+, \verb+xb+, \verb+xc+, \verb+xd+, \verb+xe+, \verb+xf+, \verb+xg+, and \verb+y+.
Since our computations never involve more than $8$ variables, computations in this group suffice to show that our computations hold in general.

Elements of the sets $S_{\Aut}(n)$ and $S_{\IA}(n)$ are parametrized over basis elements from $F_n$ and their inverses, so we model these sets using lists.
For example, we model the generator $\Mul{x_a}{x_b}$ as the list \verb+["M",xa,xb]+, $\Con{y}{x_a}$ as \verb+["C",y,xa]+, and $\Mulcomm{x_a^{-1}}{y}{x_c}$ as \verb+["Mc",xa^-1,y,xc]+.
We model $P_{a,b}$ as \verb+["P",xa,xb]+ and $I_a$ as \verb+["I",xa]+.
The examples should make clear: the first entry in the list is a string key \verb+"M"+, \verb+"C"+, \verb+"Mc"+, \verb+"P"+, or \verb+"I"+, indicating whether the list represents a transvection, conjugation move, commutator transvection, swap or inversion.
The parameters given as subscripts in the generator are then the remaining elements of the list, in the same order.

GAP's built-in free group functionality expects the basis elements to be variables, not lists, so we do not use it to model $S_{\Aut}(n)^{\ast}$ and $F(S_{\IA}(n))$.
We model inverses of generators as follows:
the inverse of \verb+["M",xa,xb]+ is \verb+["M",xa,xb^-1]+ and the inverse of \verb+["C",xa,xb]+ is  \verb+["C",xa,xb^-1]+, but the inverse of \verb+["Mc",xa,xb,xc]+ is \verb+["Mc",xa,xc,xb]+.
Swaps and inversions are their own inverses.
Technically, this means that we are not really modeling $S_{\Aut}(n)^{\ast}$ and $F(S_{\IA}(n))$; instead we model structures where the order relations for swaps and inversions and the relation R0 for inverting commutator transvections are built in.
This is not a problem because our verifications always show that certain formulas are trivial modulo our relations, and we can always apply the R0 and order relations as needed.

We model words in $S_{\Aut}(n)^{\ast}$ and $F(S_{\IA}(n))$ as lists of generators and inverse generators.
The empty word \verb+[]+ represents the trivial element.
We wrote several functions in \verb+h2ia.g+ that perform common tasks on words.
The function \verb+pw+ takes any number of words (reduced or not) as arguments and returns the freely reduced product of those  words in the given order, as a single word.
The function \verb+iw+ inverts its input word and the function \verb+cyw+ cyclically permutes its input word.

The function \verb+iarel+ outputs the relations $R_{\IA}^0(n)$.
We introduce some extra relations for convenience.
The function \verb+exiarel+ outputs these extra relations and the code generating the list \verb+exiarelchecklist+ derives the extra relations from the basic relations.
The function \verb+theta+ takes in a word $w$ in $S_{\Aut}(n)$ and a word  $v$ in $S_{\IA}(n)$, and returns $\ttheta(w)(v)$.
In addition to the functions described here, we often define simple macros for carrying out the verifications.

The function \verb+applyrels+ is particularly useful, because it inserts multiple relations into a word.
It takes in two inputs: a starting word and a list of words with placement indicators.
The function recursively inserts the first word from the list in the starting word at the given position, reduces the word, and then calls itself with the new word as the starting word and with the same list of insertions, with the first dropped.

For example, the following command appears in the justification of Proposition~\ref{proposition:autaction}:
\begin{verbatim}
applyrels(
     pw(
          theta([["M",xa,xb],["M",xa,xb^-1]], [["Mc",xb,xa,xe]]),
          [["Mc",xb,xe,xa]]
     ),
     [
          [6,iarel(5,[xa,xb,xe])],
          [6,iw(exiarel(1,[xb^-1,xe,xa]))],
          [2,iw(iarel(4,[xe,xa,xb]))],
          [2,iarel(6,[xb^-1,xe^-1,xa])],
          [2,iw(iarel(5,[xb,xe,xa]))]
     ]
)
\end{verbatim}
This tells the GAP system to compute the effect of $\ttheta(\Mul{x_a}{x_b}\Mul{x_a}{x_b}^{-1})$ on $\Mulcomm{x_b}{x_a}{x_e}$.
Then it multiplies this by $\Mulcomm{x_b}{x_e}{x_a}$, the inverse of $\Mulcomm{x_b}{x_a}{x_e}$.
It then freely reduces this word.
The system inserts a version of R5 after the sixth letter in this word, and reduces the result to a new word.
Then it inserts the inverse of one of the extra relations after the sixth letter in the new word and reduces it.
It continues with inserting relations and reducing the resulting expressions, inserting instances of R4, R5 and R6.
Since the entire expression evaluates to \verb+[]+, we have expressed 
\[ \ttheta(\Mul{x_a}{x_b}\Mul{x_a}{x_b}^{-1})(\Mulcomm{x_b}{x_a}{x_e})\cdot \Mulcomm{x_b}{x_e}{x_a}\]
as a product of relations in $\Quotient_n$.
In any example like this, an interested reader can reproduce our reduction process by removing all the list entries from the second input of the \verb+applyrels+ call, and then adding them back in one at a time, evaluating after each one.

\subsection{Verifying the map to \texorpdfstring{$\IA_n$}{IAn}}
\label{section:maptoIAn}
First we prove Proposition~\ref{proposition:verifyrels}, which states that our relations $R_{\IA}^0(n)$ hold in $\IA_n$.
\begin{proof}[Proof of Proposition~\ref{proposition:verifyrels}]
The code generating the list \verb+verifyiarel+ generates examples of all the relations in $R_{\IA}^0(n)$, with all allowable configurations of coincidences between the subscripts on the generators.
It converts each of these relations into automorphisms of $F_n$ and evaluates them on a basis for $F_n$, returning \verb+true+ if all basis elements are unchanged.
We evaluate on a fixed finite-rank free group, but since the basic relations involve at most six generators, those evaluations suffice to show the result in general.
Since \verb+verifyiarel+ evaluates to a list of \verb+true+, this means that all these relations are true.
\end{proof}

Next we prove Proposition~\ref{proposition:verifyendomorphisms}, which states that $\ttheta$ acts by conjugation when evaluated on generators (see equation~\eqref{eqn:thetakey}).
\begin{proof}[Proof of Proposition~\ref{proposition:verifyendomorphisms}]
The code generating the list \verb+thetavsconjaut+ goes through all possible configurations for a pair of generators $s$ from $S_{\Aut}(n)$ and $t$ from $S_{\IA}(n)$, evaluates $\ttheta(s)(t)$ as a product of generators, and then evaluates both $\ttheta(s)(t)$ and $sts^{-1}$ on a basis for $F_n$.
It returns \verb+true+ when both have the same effect on all basis elements.
Since \verb+thetavsconjaut+ evaluates to a list of  \verb+true+, the proposition holds.
\end{proof}

\subsection{Verifying Proposition~\ref{proposition:autaction}}
\label{section:actioncomputations}

\begin{proof}[Proof of Proposition~\ref{proposition:autaction}]
The action of $\Aut(F_n)$ on $\Quotient_n$ is given by our substitution rule endomorphism map
\[\ttheta\co S_{\Aut}(n)\to \End(F(S_{\IA}(n))).\]

First of all, it is clear that for each $s\in S_{\Aut}(n)$, the element
$\ttheta(s)$ defines an endomorphism of $\Quotient_n$.
This is because the subgroup of $F(S_{\IA}(n))$ normally generated by the relations of $\Quotient_n$ is invariant under $\ttheta(s)$ by the definition of $\Quotient_n$.

Next, we verify that $\ttheta(s)$ is an automorphism of $\Quotient_n$.
If $s$ is a swap or an inversion, then it is clear from the definition of $\ttheta$ that this is the case.
In the code generating the list \verb+thetainverselist+, we compute $\ttheta(s)(\ttheta(s^{-1})(t))t^{-1}$ for $s=\Mul{x_a}{x_b}$ and for all possible configurations of $t$ relative to $s$.
In each case, we reduce it to the trivial word using relations for $\Quotient_n$.
It is not hard to deduce that $\ttheta(s)(\ttheta(s^{-1})(t))=t$ in $\Quotient_n$ for the remaining choices of $s=\Mul{x_a}{x_b}^{-1}$, $\Mul{x_a^{-1}}{x_b}$ and $\Mul{x_a^{-1}}{x_b}^{-1}$, using the fact that it is true for $s=\Mul{x_a}{x_b}$.

So this shows that $\ttheta$ defines an action
\[F(S_{\Aut}(n))\to \Aut(\Quotient_n).\]
Now we need to verify that this action descends to an action of $\Aut(F_n)$.
To show this, it is enough to show that for every relation $r$ in a presentation for $\Aut(F_n)$, we have 
\begin{equation}\label{eqn:actbyrelator}
\ttheta(r)(t)=t\quad \text{ in $\Quotient_n$},
\end{equation}
for $t$ taken from a generating set for $\Quotient_n$.
To check this, we use the same version of Nielsen's presentation for $\Aut(F_n)$ that we used in~\cite[Theorem~5.5]{DayPutmanBirmanIA}.
The generators are the same set $S_{\Aut}(n)$ we use here, and the relations fall into four classes N1--N5.
N1 are sufficient relations for the subgroup generated by swaps and inversions and N2 are relations indicating how to conjugate transvections by swaps and inversions.
It is an exercise to see that~\eqref{eqn:actbyrelator} holds for relations of class N1 and N2.
Relations N3, N4 and N5 are more complicated relations.
For each of these, we compute $\ttheta(r)(t)t^{-1}$ on generators $t$ (for $t$ with enough configurations of subscripts to include a generating set) and reduce the resulting expressions to $1$ using relations from $\Quotient_n$.
These computations are given in the code generating the lists \verb+thetaN3list+, \verb+thetaN4list+, and \verb+thetaN5list+.
Since these evaluate to lists of the trivial word, this verifies equation~\eqref{eqn:actbyrelator}.
We have shown that the action of an element of $\Aut(F_n)$ on $\Quotient_n$ does not depend on the word in $F(S_{\Aut}(n))$ we use to represent it.

Since we have shown that $\ttheta$ defines an action, now we can check the three properties asserted in Proposition~\ref{proposition:autaction}.
We have already verified the first point (we took the definition of the action to agree with it).
To verify the second point, we need to check that
for $w,s\in S_{\IA}(n)$, there is $\widetilde w\in F(S_{\Aut}(n))$ representing $w$ 
with
\begin{equation}\label{eq:conjcheck}
\ttheta(\widetilde w)(s)=wsw^{-1}\quad\text{ in $\Quotient_n$}.
\end{equation}
In fact, it is enough to verify this for $w,s$ in a smaller generating set, and the generating set that $s$ is taken from may depend on $w$.
In the code generating \verb+thetaconjrellist+, for each choice of $w$ from $S_{\Mag}(n)$, we lift $w$ to $\widetilde w\in F(S_{\Aut}(n))$, and for several configurations of subscripts in the generator $s$, we reduce $\ttheta(\widetilde w)(s)ws^{-1}w^{-1}$ to the identity using relations from $\Quotient_n$.
We use enough configurations of subscripts in $s$ to cover all cases for $s$ in a generating set (a conjugate of $S_{\Mag}(n)$).

To check the third point, we use the generating set $(S_{\Aut}(n))_{\Conj{x_n}}$ for $(\Aut(F_n))_{\Conj{x_n}}$ mentioned in the proof of Proposition~\ref{proposition:stabilizergenerators} above, namely
\[
\begin{split}
&\Set{$\Mul{x_a^\alpha}{x_b}$}{$1\leq a\leq n-1$,$1\leq b\leq n$, $\alpha=\pm1$, $a\neq b$}
\cup \Set{$P_{a,b}$}{$1\leq a,b\leq n$, $a\neq b$, $a\neq n$}
 \\
&\cup \Set{$I_a$}{$1\leq a\leq n$} \cup \Set{$\Con{x_n}{x_a}$}{$1\leq a\leq n-1$}.
\end{split}
\]
We need to check that for each of these generators, there is $w\in F(S_{\Aut}(n))$ representing it with $\ttheta(w)(s)$ in $(\Quotient_n)_{\Conj{x_n}}$ (really, that $\ttheta(w)(s)$ is equal in $\Quotient_n$ to  an element of $(\Quotient_n)_{\Conj{x_n}}$).
This is clear from the definition of $\ttheta$ for $w$ a swap or inversion.
It can be verified for $w=\Mul{x_a^\alpha}{x_b}$ by inspecting Table~\ref{table:thetadef}.
For $w$ representing $\Con{x_n}{x_a}$, the fact that $\ttheta(w)(s)\in (\Quotient_n)_{\Conj{x_n}}$ follows from the second point in this proposition, since $\Con{x_n}{x_a}\in \IA_n$.
\end{proof}

\subsection{Verifying Proposition~\ref{proposition:mapfromkernel}}
\label{section:mapcomputations}

Here we prove Proposition~\ref{proposition:mapfromkernel}.
We recall the statement: $\BirKer_{n-1,1}$ is the kernel of the natural map $(\IA_n)_{\Conj{x_n}}\to \IA_{n-1}$, and the proposition asserts that the inclusion $\BirKer_{n-1,1}\hookrightarrow \IA_n$ factors as the composition of a map $\BirKer_{n-1,1}\to \Quotient_n$ with the projection $\Quotient_n\to\IA_n$.

The proof uses the finite L-presentation for $\BirKer_{n-1,1}$ from~\cite{DayPutmanBirmanIA}.
We note that \cite[Theorem~D]{DayPutmanBirmanIA} asserts the existence of such a presentation, and \cite[Theorem~6.2]{DayPutmanBirmanIA} gives the precise statement that we use in the computations.
Since this L-presentation is in fact a presentation for $\BirKer_{n-1,1}$, we use the same notation for $\BirKer_{n-1,1}$ as a subset of $\IA_n$ and $\BirKer_{n-1,1}$ as the group given by this presentation.

We do not reproduce the L-presentation here, but instead we describe some of its features.
Its finite generating set is
\[
\begin{split}
S_{\mathcal K}(n)=&
\Set{$\Mulcomm{x_a^\alpha}{x_n^\epsilon}{x_b^\beta}$}{$1\leq a,b\leq n-1$, $a\neq b$, $\alpha,\beta,\epsilon\in\{1,-1\}$}
\\
&
\cup
\Set{$\Con{x_n}{x_a}$}{$1\leq a \leq n-1$}
\cup
\Set{$\Con{x_a}{x_n}$}{$1\leq a \leq n-1$}
\end{split}.
\]
The substitution endomorphisms of the L-presentation for $\BirKer_{n-1,1}$ are indexed by a 
finite generating set $(S_{\Aut}(n))_{\Conj{x_n}}$ for $(\Aut(F_n))_{\Conj{x_n}}$.
The endomorphisms themselves are the image of a map
\[\tphi\co (S_{\Aut}(n))_{\Conj{x_n}}\to\End(F(S_{\mathcal K}(n))).\]

\begin{proof}[Proof of Proposition~\ref{proposition:mapfromkernel}]
Since $S_{\mathcal K}(n)$ is a subset of $S_{\IA}(n)$, we map 
$\BirKer_{n-1,1}$ to $\Quotient_n$ by sending each generator to the generator of the same name.
To verify that this map on generators extends to a well defined map of groups, we need to check that each defining relation from $\BirKer_{n-1,1}$ maps to the trivial element of $\Quotient_n$.
Since $\BirKer_{n-1,1}$ is given by a L-presentation, we proceed as follows:
\begin{enumerate}
\item We check that each of the basic relations from $\BirKer_{n-1,1}$  maps to the trivial element of $\Quotient_n$.
\item 
We check that for $s\in(S_{\Aut}(n))_{\Conj{x_n}}$ and $t\in S_{\mathcal K}(n)$,
we have
\[\tphi(s)(t)=\ttheta(s)(t) \quad\text{ in $\Quotient_n$},\]
where we use $(S_{\Aut}(n))_{\Conj{x_n}}\subset S_{\Aut}(n)$ to plug $s$ into $\ttheta$, and we interpret both expressions in $\Quotient_n$ using $F(S_{\mathcal K}(n))\subset F(S_{\IA}(n))$.
\end{enumerate}
The first point is verified in the code generating the list \verb+kfromialist+.
The function \verb+krel+ produces the basic relations from $\BirKer_{n-1,1}$, and we reduce each relation to the identity by applying relations from $\Quotient_n$.
The second point is verified in the code generating the list \verb+thetavsphlist+.
For each choice of pairs of generators, we reduce the difference of $\ttheta$ and $\tphi$ using relations from $\Quotient_n$.

With these two points verified, one can easily check by induction that every extended relation (starting with a basic relation, and applying any sequence of rewriting rules) maps to the identity element in $\Quotient_n$.
\end{proof}

\section{Generators for \texorpdfstring{$\HH_2(\IA_n)$}{H2(IAn)}}
\label{section:h2gen}

In this section, we prove Theorem \ref{maintheorem:h2finite}, which asserts that
there exists a finite subset of $\HH_2(\IA_n)$ whose $\GL_n(\Z)$-orbit spans
$\HH_2(\IA_n)$.  In fact, we gave an explicit list of generators in Table~\ref{table:commutatorrelatorssmall};
each generator is of the form $\Relh{r}$ for a commutator relation $r$.
This list is reproduced in Table \ref{table:commutatorrelators}, which also introduces
the notation $h_i(\dotsc) \in \HH_2(\IA_n)$ for the associated elements of homology (this notation
will be used during the calculations in \S \ref{section:finalcalcs}, though we will not use
it in this section).  
The following theorem asserts that this list is complete; it is a more precise
form of Theorem \ref{maintheorem:h2finite} and will be the main result of this section.

\begin{theorem}
\label{theorem:h2gen}
Fix $n \geq 2$.  Let $S_H(n)$ be the set of commutator relators in Table \ref{table:commutatorrelators}.
Then the $\GL_n(\Z)$-orbit of the set $\Set{$\Relh{r}$}{$r \in S_H(n)$}$ spans $\HH_2(\IA_n)$.
\end{theorem}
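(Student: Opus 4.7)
My plan is to combine Hopf's formula with the finite L-presentation from Theorem \ref{maintheorem:finitelpres}. Let $F = F(S_{\IA}(n))$, let $M \subset \End(F)$ denote the submonoid generated by $E_{\IA}(n)$, and let $K \trianglelefteq F$ be the normal closure of the full relation set $R_{\IA}(n) = \{f(r) : f \in M,\ r \in R_{\IA}^0(n)\}$. Hopf's formula identifies $\HH_2(\IA_n) \cong (K \cap [F,F])/[F,K]$, which fits into the short exact sequence
\[0 \longrightarrow \HH_2(\IA_n) \longrightarrow K/[F,K] \longrightarrow J \longrightarrow 0, \quad J := \ker(F^{ab} \to \IA_n^{ab}).\]
Since conjugation in $F$ acts trivially on $K/[F,K]$, this abelian group is generated by $\{[\rho] : \rho \in R_{\IA}(n)\}$.

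Next, Proposition \ref{proposition:autaction} tells us that the endomorphisms $\ttheta(s) \in E_{\IA}(n)$ realize the conjugation action of each $s \in S_{\Aut}(n)$ on $\IA_n$. Since $\IA_n$ itself acts trivially on $\HH_2(\IA_n)$ (as inner automorphisms act trivially on homology), this yields a $\GL_n(\Z) = \Aut(F_n)/\IA_n$ action on $\HH_2(\IA_n)$, and the $M$-translates of any class in $K/[F,K]$ project to the $\GL_n(\Z)$-orbit of its image in $\HH_2(\IA_n)$. Combined with the preceding paragraph, this reduces the theorem to showing that for each $r \in R_{\IA}^0(n)$, the class $[r]$ in $K/[F,K]$ equals, modulo $[F,K]$, a product of $\GL_n(\Z)$-translates of $\Relh{c}$ for $c \in S_H(n)$, plus a controlled residual whose image in $F^{ab}$ matches that of $r$.

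I would carry out this rewriting relation-by-relation using Tables \ref{table:basicrelsian} and \ref{table:commutatorrelatorssmall}. Relations R1, R2, R3, R4, R7 are already commutator words and yield directly the relators H1, H2, H3, H4, H7. For the non-commutator R0, R5, R6, R8, R9, one combines them with the already-identified commutator relations to isolate commutator consequences matching H5, H6, H8, H9: for instance, composing R5 with R7 produces H5; H6 is the commutator form of R6 obtained by moving its non-commutator side over; and the rewrites for R8, R9 assemble from applications of R0, R5, R6, R7. The residual in each case lies in the subgroup of $F^{ab}$ generated by the basic identifications $\overline{\Mulcomm{x_a^\alpha}{x_b^\beta}{x_c^\gamma}} + \overline{\Mulcomm{x_a^\alpha}{x_c^\gamma}{x_b^\beta}}$ and $\overline{\Mulcomm{x_a^\alpha}{x_b^\beta}{x_c^\gamma}} + \overline{\Mulcomm{x_a^{-\alpha}}{x_b^\beta}{x_c^\gamma}}$. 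These manipulations are finite and I would verify them using the GAP framework developed in \S \ref{section:somecomputations}.

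The final step is the closing cancellation. Any $x \in \HH_2(\IA_n)$ pulls back to a $\GL_n(\Z)$-combination of $[r]$'s with vanishing image in $J$; substituting the commutator rewrites reduces $x$ to a $\GL_n(\Z)$-combination of $\Relh{c}$'s plus residuals in $K/[F,K]$ whose combined image in $J$ is zero. One must then show these residuals lie in the $\GL_n(\Z)$-span of $\{\Relh{c} : c \in S_H(n)\}$. The main obstacle is precisely this: it requires understanding the $\GL_n(\Z)$-module structure of $J$ sharply enough to conclude that every zero-image combination of basic abelianization terms lifts to a $\GL_n(\Z)$-combination of the H-relators (in particular H5, H6, which encode the abelianization identities themselves). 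I expect this to be the most delicate part of the argument. If a direct analysis proves unwieldy, the fallback would be to pass via the Magnus generators $S_{\Mag}(n) \subset S_{\IA}(n)$: since $S_{\Mag}(n)$ maps to a basis of $\IA_n^{ab}$, any presentation using $S_{\Mag}(n)$ has $K \subset [F,F]$ automatically, dissolving the residual issue—at the cost of first producing an ordinary presentation from our L-presentation by substituting out the extra generators.
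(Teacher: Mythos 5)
Your skeleton is the same as the paper's: both invoke Hopf's formula, use the L-presentation from Theorem~\ref{maintheorem:finitelpres} to see that $K/[F,K]$ is generated by the classes $\Elt{\ttheta(w)(r)}$ with $w$ in the monoid and $r$ ranging over $R^0_{\IA}(n)$, and then try to rewrite each such class as ``H-relators plus residual.'' You also correctly identify where the proof has to get hard: controlling the residual. But as it stands there are two genuine gaps.

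First, your residual is not controlled tightly enough. You only demand that the residual's image in $F^{\mathrm{ab}}$ match that of $r$, which at best locates it in the kernel of $K/[F,K] \to J$ --- and that kernel is exactly $\HH_2(\IA_n)$, the thing you are trying to generate. Without a sharper normal form for the residual the argument risks circularity, and you concede as much. The paper escapes this by proving a precise bookkeeping lemma (Lemma~\ref{lemma:rewritethetaR5R6}): for $r$ one of R0, R5, R6, the class $\Elt{\ttheta(w)(r)}$ equals a sum of H-generators plus a sum of \emph{un-translated} R0, R5, R6 relators. That ``no $\ttheta$ on the residual'' feature is what makes the remainder tractable; nothing in your plan produces it.

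Second, you do not have the closing step. The paper's key input is Lemma~\ref{lemma:R5R6H2}: after pairing off R0 relators and normalizing signs, one is left, for each fixed ordered triple $(x_a,x_b,x_c)$, with a product of R5/R6 relators lying in $[F',F']$ for a small free subgroup $F'$, and an explicit $8\times 16$ exponent-sum matrix computation (plus three GAP reductions) shows any such product is a sum of H2 generators modulo $[F',R']$. You describe the desideratum (``every zero-image combination of basic abelianization terms lifts to a $\GL_n(\Z)$-combination of H-relators'') but offer no mechanism for proving it; the triple-by-triple reduction to a finite linear algebra problem is the missing idea. Your fallback --- rewrite the L-presentation over $S_{\Mag}(n)$ so that $R \subset [F,F]$ automatically and the residual issue evaporates --- is a genuinely different route, and is in fact the strategy sketched in the introduction's ``Summary and trouble'' paragraph, which the authors then deliberately abandon because substituting out the extra generators yields an unmanageably complicated presentation. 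Finally, a minor correction to your bookkeeping: R7, R8, R9 already rearrange (using R0) into the commutator relators H5, H6, H7; the ``troublesome'' relations are only R0, R5, R6, and H8, H9 are not produced from R8, R9 but are separate derived commutator consequences introduced because they are needed in the rewriting of $\ttheta$-translates of R5 and R6.
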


Before proving Theorem \ref{theorem:h2gen}, we will use it to derive Theorem \ref{maintheorem:surjectivestability}.
\begin{proof}[{Proof of Theorem \ref{maintheorem:surjectivestability}}]
Recall that this theorem asserts that for $n \geq 6$, the $\GL_{n+1}(\Z)$-orbit of the image of the natural map $\HH_2(\IA_n) \rightarrow \HH_2(\IA_{n+1})$
spans $\HH_2(\IA_{n+1})$.
Let $S_{n+1} \subset \GL_{n+1}(\Z)$ be the subgroup consisting of permutation matrices.
By inspecting Table~\ref{table:commutatorrelators}, it is clear that
the $S_{n+1}$-orbit of the image of $\Set{$\Relh{r}$}{$r \in S_H(n)$} \subset \HH_2(\IA_n)$ in $\HH_2(\IA_{n+1})$
is $\Set{$\Relh{r}$}{$r \in S_H(n+1)$}$.
This uses the fact that $n\geq 6$, since the commutator relations in $S_H(n)$ use generators involving at most six basis elements.
\end{proof}

We now turn to the proof of Theorem \ref{theorem:h2gen}.  We start by introducing some notation.
Let $F=F(S_{\IA}(n))$ and let $R \subset F$ denote the full set of relations of $\IA_n$, so $\IA_n=F/R$.
Define $\HIAT = R/[F,R]$, and for $r \in R$ denote by $\Elt{r}$ the associated element of $\HIAT$.
There is a natural map $\HIAT \rightarrow F^{\text{ab}}$, and the starting point for our proof
is the following lemma.  In it, recall from the beginning of \S \ref{section:somecomputations} that
$S_{\Aut}(n)^{\ast}$ is the free monoid on the set $S_{\Aut}(n)$.

\begin{lemma}
\label{lemma:hiat}
The group $\HIAT$ is an abelian group which is generated by
\[\Set{$\Elt{\ttheta(w)(r)}$}{$w \in S_{\Aut}(n)^{\ast}$ and $r$ is one of the relations R0--R9 from Table \ref{table:basicrelsian}}.\]
Also, we have $\HH_2(\IA_n) = \Ker(\HIAT \rightarrow F^{\text{ab}})$.
\end{lemma}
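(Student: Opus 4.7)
The plan is to derive all three assertions directly from Theorem \ref{maintheorem:finitelpres} combined with Hopf's formula, using the standard observation that the relation module $R/[F,R]$ is abelian and that conjugation of $R$ by $F$ becomes trivial modulo $[F,R]$.

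First I would verify that $\HIAT$ is abelian. If $r_1,r_2 \in R$, then $[r_1,r_2] \in [R,R] \subseteq [F,R]$, so $r_1$ and $r_2$ commute in $R/[F,R]$. Next, for the kernel statement, the inclusion $[F,R] \subseteq [F,F]$ means that the map $r \mapsto r[F,F]$ descends to a well-defined homomorphism $\HIAT \to F^{\text{ab}}$. Its kernel is $(R \cap [F,F])/[F,R]$, which is exactly $\HH_2(\IA_n)$ by Hopf's formula as quoted in the introduction.

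The main content is the generation statement. By Theorem \ref{maintheorem:finitelpres}, the group $\IA_n$ has the L-presentation $\LPres{S_{\IA}(n)}{R_{\IA}^0(n)}{E_{\IA}(n)}$, where $E_{\IA}(n) = \ttheta(S_{\Aut}(n))$. Unpacking the definition of an L-presentation, this means that $R$ is the normal closure in $F$ of the set
\[\Set{$\ttheta(w)(r)$}{$w \in S_{\Aut}(n)^{\ast}$, $r \in R_{\IA}^0(n)$}.\]
Equivalently, $R$ is generated as a subgroup of $F$ by the elements $f \ttheta(w)(r) f^{-1}$ with $f \in F$, $w \in S_{\Aut}(n)^{\ast}$, and $r \in R_{\IA}^0(n)$.

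The last step is to observe that conjugation is invisible in $R/[F,R]$. Indeed, for any $f \in F$ and any $\rho \in R$, we have $f\rho f^{-1} \rho^{-1} = [f,\rho] \in [F,R]$, so $\Elt{f \rho f^{-1}} = \Elt{\rho}$ in $\HIAT$. Applying this to each generator $f \ttheta(w)(r) f^{-1}$ of $R$, we conclude that $\HIAT$ is generated as an abelian group by the elements $\Elt{\ttheta(w)(r)}$ with $w \in S_{\Aut}(n)^{\ast}$ and $r \in R_{\IA}^0(n)$, which is precisely the claimed generating set since $R_{\IA}^0(n)$ consists of the relations R0--R9 of Table \ref{table:basicrelsian}.

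There is no real obstacle here once Theorem \ref{maintheorem:finitelpres} is in hand; the only subtlety is keeping straight the distinction between the basic relations $R_{\IA}^0(n)$, their $\ttheta(w)$-images (the extended relations), and the full normal closure $R$, and verifying that the passage to $R/[F,R]$ collapses normal-closure into the $\ttheta$-orbit.
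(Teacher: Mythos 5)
Your proof is correct and matches the paper's argument in all essentials: you establish abelianness from $[R,R] \subseteq [F,R]$, derive the generating set from Theorem \ref{maintheorem:finitelpres} after noting that conjugation by $F$ acts trivially modulo $[F,R]$, and identify the kernel of $\HIAT \to F^{\text{ab}}$ with $\HH_2(\IA_n)$ via Hopf's formula. The only cosmetic difference is that the paper runs the five-term exact sequence of the extension $0 \to R \to F \to \IA_n \to 0$ (which is a proof of Hopf's formula) rather than quoting Hopf's formula and computing $\Ker(\HIAT \to F^{\text{ab}}) = (R\cap[F,F])/[F,R]$ directly, but this is the same content.
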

\begin{proof}
The group $\HIAT$ is abelian since $[R,R] \subset [F,R]$.  For $v \in F$ and $r \in R$, we have $[v,r] \in [F,R]$,
so $\Elt{v r v^{-1}} = \Elt{r}$.  The indicated generating set for $\HIAT$ thus follows from
Theorem \ref{maintheorem:finitelpres}.  As for the final statement of the lemma, we follow one
of the standard proofs of Hopf's formula \cite{BrownCohomology}.  The 5-term exact sequence in group
homology associated to the short exact sequence
\[0 \longrightarrow R \longrightarrow F \longrightarrow \IA_n \longrightarrow 0\]
is
\[\HH_2(F) \longrightarrow \HH_2(\IA_n) \longrightarrow R/[F,R] \longrightarrow \HH_1(F) \longrightarrow \HH_1(\IA_n) \longrightarrow 0.\]
Since $F$ is free, we have $\HH_2(F) = 0$, and the claim follows.
\end{proof}

Our goal will be to take an element of $\HIAT$ that happens to lie in $\HH_2(\IA_n)$ and
rewrite it as a sum of elements of the form
\begin{equation}
\label{eqn:goodset}
\Set{$\Elt{\ttheta(w)(r)}$}{$w \in S_{\Aut}(n)^{\ast}$ and $r$ is one of the relations H1--H9 from Table \ref{table:commutatorrelators}}.
\end{equation}
The relations H1--H4 are the same as R1--R4, and the relations H5--H7 are the same as R6--R9.  The troublesome
relations are R0, R5, and R6, none of which lie in $\HH_2(\IA_n)$.  For $r \in R$, we have
$\Elt{r} \in \HH_2(\IA_n)$ if and only if the exponent-sum of each generator in $S_{\IA}(n)$ appearing in it is $0$.
For our problematic relations R0, R5, and R6, the exponent-sum of all the conjugations moves is already $0$,
so we will only need to study the exponent-sums of the commutator transvections.

\begin{table}[p!]
\begin{tabular}{p{0.95\textwidth}}
\toprule
\begin{center}
\textbf{Basic commutator relators for $\IA_n$}
\end{center}
\vspace*{-0ex}\begin{enumerate}\setlength{\itemsep}{0ex}
\item[H1.]
$h_1(x_a^\alpha,x_b^\beta,x_c^\gamma,x_d^\delta)$:
possibly with $b=d$,
\[ [\Con{x_a}{x_b}^\beta,\Con{x_c}{x_d}^\delta]=1.\]
\item[H2.]
$h_2(x_a^\alpha,x_b^\beta,x_c^\gamma,x_d^\delta,x_e^\epsilon,x_f^\zeta)$:
possibly with $\{b,c\}\cap\{e,f\}\neq \varnothing$ or with $x_a^{\alpha}=x_d^{-\delta}$, as long as $x_a^\alpha\neq x_d^\delta$, $a\notin\{e,f\}$ and $d\notin \{b,c\}$,
\[ [\Mulcomm{x_a^\alpha}{x_b^\beta}{x_c^\gamma},\Mulcomm{x_d^\delta}{x_e^\epsilon}{x_f^\zeta}]=1.\]
\item[H3.]
$h_3(x_c^\gamma,x_d^\delta,x_e^\epsilon, x_a^\alpha,x_b^\beta)$:
possibly with $b\in\{d,e\}$, if $c\notin\{a,b\}$ and $a\notin\{c,d,e\}$,
\[ [\Con{x_a}{x_b}^\beta,\Mulcomm{x_c^\gamma}{x_d^\delta}{x_e^\epsilon}]=1.\]
\item[H4.]
$h_4(x_a^\alpha,x_b^\beta,x_c^\gamma)$:
\[[\Con{x_c}{x_b}^\beta\Con{x_a}{x_b}^\beta,\Con{x_c}{x_a}^\alpha]=1.\]
\item[H5.]
$h_5(x_a^\alpha,x_b^\beta,x_c^\gamma,x_d^\delta)$:
\[[\Con{x_a}{x_c}^{-\gamma},\Con{x_a}{x_d}^{-\delta}][\Con{x_a}{x_b}^{-\beta},\Mulcomm{x_b^\beta}{x_c^\gamma}{x_d^\delta}]=1.\]
\item[H6.]
$h_6(x_a^\alpha,x_b^\beta,x_c^\gamma,x_d^\delta,x_e^\epsilon)$:
possibly with $b=e$ or $c=e$,
\[
[\Mulcomm{x_a^\alpha}{x_b^\beta}{x_c^\gamma},\Mulcomm{x_d^\delta}{x_a^\alpha}{x_e^\epsilon}]
[\Mulcomm{x_d^\delta}{x_a^\alpha}{x_e^\epsilon},\Mulcomm{x_d^\delta}{x_c^\gamma}{x_b^\beta}]
[\Mulcomm{x_d^\delta}{x_c^\gamma}{x_b^\beta},\Con{x_d}{x_e}^{-\epsilon}]
=1.
\]
\item[H7.]
$h_7(x_a^\alpha,x_b^\beta,x_c^\gamma,x_d^\delta)$:
possibly with $b=d$,
\[
[\Mulcomm{x_c^\gamma}{x_a^\alpha}{x_d^\delta},\Con{x_a}{x_b}^\beta]
[\Con{x_c}{x_d}^{-\delta},\Mulcomm{x_c^\gamma}{x_a^\alpha}{x_b^\beta}]
[\Mulcomm{x_c^\gamma}{x_a^\alpha}{x_b^\beta},\Mulcomm{x_c^\gamma}{x_a^\alpha}{x_d^\delta}]
=1.
\]
\item[H8.]
$h_8(x_a^\alpha,x_b^\beta,x_c^\gamma,x_d^\delta)$:
\[
[\Mulcomm{x_a^\alpha}{x_b^\beta}{x_c^\gamma},\Con{x_a}{x_d}^\delta\Con{x_b}{x_d}^\delta\Con{x_c}{x_d}^\delta]=1.
\]
\item[H9.]
$h_9(x_a^\alpha,x_b^\beta,x_c^\gamma)$:
\[
[\Con{x_a}{x_c}^\gamma\Con{x_b}{x_c}^\gamma,\Con{x_a}{x_b}^\beta\Con{x_c}{x_b}^\beta]
[\Mulcomm{x_a^\alpha}{x_b^\beta}{x_c^\gamma},\Con{x_b}{x_a}^{\alpha}\Con{x_c}{x_a}^\alpha]
=
1.\]
\end{enumerate}\\
\bottomrule
\end{tabular}
\caption{
The set $S_H(n)$ of
commutator relators such that the $\GL_n(\Z)$-orbit of
$\Set{$\Relh{r}$}{$r \in S_H(n)$}$ spans $\HH_2(\IA_n)$.
Distinct letters are assumed to represent distinct indices unless stated otherwise.
We give notation $h_i(\dotsc)$ for the elements in $\HH_2(\IA_n)$, which we use later.
}
\label{table:commutatorrelators}
\end{table}

We begin with the following lemma, which will allow us to mostly ignore our rewriting rules $\ttheta(\cdot)$.

\begin{lemma}
\label{lemma:rewritethetaR5R6}
Consider $w \in S_{\Aut}(n)^{\ast}$, and let $r$ be a relation of the form R0, R5, or R6.  Then
$\Elt{\ttheta(w)(r)} = h + h'$, where $h$ and $h'$ are as follows.
\begin{compactitem}
\item $h \in \HH_2(\IA_n)$ is a sum of elements from \eqref{eqn:goodset}.
\item $h'$ is a sum of elements of the form
\[\Set{$\Elt{r}$}{$r$ is one of the relations R0, R5, and R6 from Table \ref{table:basicrelsian}}.\]
\end{compactitem}
\end{lemma}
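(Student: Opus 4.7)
The plan is to induct on the length of $w$ in $S_{\Aut}(n)^{\ast}$. The base case $w = 1$ is immediate: $\Elt{\ttheta(1)(r)} = \Elt{r}$, so we may take $h = 0$ and $h' = \Elt{r}$. For the inductive step, write $w = s w'$ with $s \in S_{\Aut}(n)$. Since $\ttheta(s)$ preserves the normal subgroup $R \subset F$ (as it descends to an automorphism of $\IA_n = F/R$ by Proposition \ref{proposition:autaction}), it induces an additive endomorphism $\ttheta(s)_{\ast}$ of the abelian group $\HIAT$, and $\Elt{\ttheta(w)(r)} = \ttheta(s)_{\ast}(\Elt{\ttheta(w')(r)})$. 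Applying the inductive hypothesis gives $\Elt{\ttheta(w')(r)} = h_0 + h_0'$ of the desired form; since prepending $s$ to each word appearing in the expression for $h_0$ produces another sum of elements of \eqref{eqn:goodset}, the term $\ttheta(s)_{\ast}(h_0)$ is still good. Thus the whole lemma reduces to proving, for every $s \in S_{\Aut}(n)$ and every $r$ of type R0, R5, or R6, that $\Elt{\ttheta(s)(r)}$ already admits a decomposition of the stated form.

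Next I would dispose of the easy cases $s = P_{i,j}$ and $s = I_i$. For these the definition of $\ttheta$ from \S \ref{section:thepresentation} simply renames indices and flips signs on each generator of $S_{\IA}(n)$, so $\ttheta(s)(r)$ is literally another instance of R0, R5, or R6. Hence $\Elt{\ttheta(s)(r)}$ is already of the form required of $h'$, with $h = 0$.

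The substantive case is $s = \Mul{x_a^{\alpha}}{x_b}^{\beta}$. Here I would use the explicit formulas in Table \ref{table:thetadef} to expand $\ttheta(s)(r)$ as a word in $F$, then rewrite this word, modulo $[F,R]$, as a product of conjugates of (i) further instances of R0, R5, or R6 and (ii) instances of the commutator relators H1--H9. The relators H8 and H9, which do not come from any single R-relation, are in the list precisely to accommodate this step: they record the commutator-relator content produced when $\ttheta(\Mul{x_a^{\alpha}}{x_b}^{\beta})$ acts on the products of conjugation moves and commutator transvections that appear in R0, R5, and R6.

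The main obstacle is simply the combinatorial bulk of this last step: for each $r \in \{\text{R0},\text{R5},\text{R6}\}$ and each allowable configuration of subscripts and signs, one must exhibit an explicit rewriting of $\ttheta(\Mul{x_a^{\alpha}}{x_b}^{\beta})(r)$ into a product of conjugates of the listed relators. Each individual case amounts to a finite sequence of insertions of conjugates of H1--H9 and of R0, R5, R6 that reduces the word freely to $1$, and is well-suited to the GAP framework already set up in \S \ref{section:computationalframework}. In fact, the H-list in Table \ref{table:commutatorrelators} was engineered so that these rewritings close up; any attempt to shorten it would leave some $\ttheta(s)(r)$ without the required decomposition.
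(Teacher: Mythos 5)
Your proposal is correct and follows essentially the same route as the paper: reduce by induction on word length to the single-letter case $w=s$, observe that for swaps and inversions $\ttheta(s)$ just renames parameters and hence produces literal instances of R0/R5/R6, and for transvections rewrite $\ttheta(s)(r)$ modulo $[F,R]$ as a combination of $\ttheta$-images of H1--H9 together with plain R0/R5/R6 relations via a case-by-case GAP verification. The only detail you omit (and which the paper handles by a short hand argument) is that the R0 case is treated separately and settles using only R0 and R2, but this is a minor point of bookkeeping rather than a gap.
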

\begin{proof}
We can use induction to reduce to the case where $w = s \in S_{\Aut}(n)$.
The proof now is a combinatorial group-theoretic calculation: we will show how to rewrite
$\ttheta(s)(r)$ as a product of relations of the desired form.

We start by dealing with the case where $r$ is of the form R0.
Observe that $\ttheta(s)(\Mulcomm{x_a^\alpha}{x_b^\beta}{x_c^\gamma})$ agrees with $\ttheta(s)(\Mulcomm{x_a^\alpha}{x_c^\gamma}{x_b^\beta})^{-1}$ up to R0, except in two cases.
These are
\[\ttheta(\Mul{x_a^\alpha}{x_b}^\beta)(\Mulcomm{x_b^{-\beta}}{x_c^\gamma}{x_d^\delta})=\Mulcomm{x_a^{\alpha}}{x_c^\gamma}{x_d^\delta}\Mulcomm{x_b^{-\beta}}{x_c^\gamma}{x_d^\delta}\]
and
\[\ttheta(\Mul{x_a^\alpha}{x_b}^\beta)(\Mulcomm{x_b^{\beta}}{x_c^\gamma}{x_d^\delta})=\Con{x_a}{x_b}^\beta\Mulcomm{x_a^{-\alpha}}{x_c^\gamma}{x_d^\delta}\Mulcomm{x_b^\beta}{x_c^\gamma}{x_d^\delta}\Con{x_a}{x_b}^{-\beta}.\]
In the first case, 
\[\ttheta(\Mul{x_a^\alpha}{x_b}^\beta)(\Mulcomm{x_b^{-\beta}}{x_d^\delta}{x_c^\gamma})^{-1}=\Mulcomm{x_b^{-\beta}}{x_c^\gamma}{x_d^\delta}\Mulcomm{x_a^{\alpha}}{x_c^\gamma}{x_d^\delta}.\]
In the second case,
\[\ttheta(\Mul{x_a^\alpha}{x_b}^\beta)(\Mulcomm{x_b^{\beta}}{x_d^\delta}{x_c^\gamma})^{-1}=\Con{x_a}{x_b}^\beta\Mulcomm{x_b^\beta}{x_c^\gamma}{x_d^\delta}\Mulcomm{x_a^{-\alpha}}{x_c^\gamma}{x_d^\delta}\Con{x_a}{x_b}^{-\beta}.\]
In both cases, the two expressions differ by an application of R2.
This means that $\ttheta(s)$ of an R0 relation can always be written using R0 and R2 relations.

Next we explain the computations that prove the lemma for R5 and R6 relations.
These are in the list \verb+rewritethetaR5R6+.
In these computations we reduce $\ttheta(s)(r)$ to the trivial word where $s\in S_A^{\pm1}$ and $r$ is an R5 or R6 relation.
These reductions may use any of the basic relations for $\IA_n$, including R5 and R6 themselves, but notably may not use the images of R5 and R6 relations under $\ttheta$.
We may use the images of R1--R4, R7--R9, H8 and H9 under $\ttheta$.

Despite these restrictions, we may use the extra relations from \verb+exiarel+ in these computations. 
Our relation
\verb+exiarel(3,[xa,xb,xc,xd])+ is H8, and \verb+exiarel(4,[xa,xb,xc,xd])+ is equivalent to H8 modulo the basic relations.
Relation \verb+exiarel(7,[xa,xb,xc])+ is H9, and relation \verb+exiarel(6,[xa,xb,xc])+ and relation \verb+exiarel(8,[xa,xb,xc,xd])+ are equivalent to H9 modulo the basic relations.
All the other \verb+exiarel+ relations can be derived without using images of R5 or R6 under $\ttheta$.
These facts can be verified by inspecting \verb+exiarelchecklist+.

If $s$ is a swap or an inversion move, then acting by $\ttheta(s)$ is always the same as acting on the parameters of the relation by $s$ in the obvious way.
Therefore \verb+rewritethetaR5R6+ only contains cases where $s$ is a transvection.

We use several redundancies between different forms of the relations R5 and R6 to reduce the number of computations.
Inverting the parameter $x_b^\beta$ in R5 (as it appears in Table \ref{table:basicrelsian}) is the same as cyclically permuting the relation.
Inverting the parameter $x_a^\alpha$ in R6 is the same as applying a relation from R2 to the original R6 relation.
Swapping the roles of $x_b^\beta$ and $x_c^\gamma$ in R6 is the same as inverting and cyclically permuting the original R6 relation and applying a relation from R2.

We use the identity:
\[\Con{x_a}{x_b}^{-\beta}\ttheta(\Mul{x_a^\alpha}{x_b}^\beta)(t)\Con{x_a}{x_b}^\beta=\ttheta(\Mul{x_a^{-\alpha}}{x_b}^{-\beta})(t)\]
which holds in $\IA_n$ for any $t\in \IA_n$.
This is a consequence of Proposition~\ref{proposition:autaction}.
In particular, this means that we only need to consider one of $\ttheta(\Mul{x_a^\alpha}{x_b^\beta})(r)$ and $\ttheta(\Mul{x_a^{-\alpha}}{x_b^{-\beta}})(r)$; if one is trivial then so is the other.

Since the computations in \verb+rewritethetaR5R6+ rewrite all configurations of $\ttheta(s)(r)$ for $r$ an R5 or R6 relation, up to these reductions, this proves the lemma.
\end{proof}

The next lemma allows us to deal with certain combinations of R5 and R6 relations.
The ordered triple of generators
of $F_n$ {\em involved} in a commutator transvection $\Mulcomm{x_i^{\alpha}}{x_j^{\beta}}{x_k^{\gamma}}$ is 
$(x_i,x_j,x_j)$.
There are eight commutator transvections involving a given triple of generators.

\begin{lemma}\label{lemma:R5R6H2}
Fix distinct $1 \leq a,b,c \leq n$, and let $w \in R\cap[F,F]$ be a product of 
R5 and R6 relations whose commutator transvections involve only $(x_a, x_b, x_c)$, in order.
Then $\Elt{w}$ can be written as a sum of elements of the form $\Elt{v}$ with $v$ an H2 relation.
\end{lemma}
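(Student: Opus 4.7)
My plan is to reduce the problem to an explicit linear-algebra analysis in the abelian group $\HIAT$ together with a combinatorial verification. First, I use R0 to normalize each R5 factor of $w$ so that every commutator transvection appearing in $w$ has the form $\Mulcomm{x_a^\alpha}{x_b^\beta}{x_c^\gamma}$ with indices $(a,b,c)$ in order; R6 is already in this form. Writing $M_{\alpha,\beta,\gamma}$ for the class of $\Mulcomm{x_a^\alpha}{x_b^\beta}{x_c^\gamma}$ in $F^{\mathrm{ab}}$, the normalized $R5^{\alpha\beta\gamma}$ abelianizes to $M_{\alpha,\beta,\gamma}+M_{\alpha,-\beta,\gamma}$ (the conjugation moves cancel), while $R6^{\alpha\beta\gamma}$ abelianizes to $M_{\alpha,\beta,\gamma}+M_{-\alpha,\beta,\gamma}$. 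The hypothesis $w\in[F,F]$ then forces the signed multiplicities of the R5 and R6 factors to satisfy a simple linear constraint with two degrees of freedom, one for each $\gamma\in\{\pm 1\}$.

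Two reductions then simplify the possible $\HIAT$ classes. First, I claim $\Elt{R5^{\alpha,+,\gamma}}=\Elt{R5^{\alpha,-,\gamma}}$ in $\HIAT$: after conjugating $R5^{\alpha,-,\gamma}$ by $\Con{x_a}{x_b}$, the difference $R5^{\alpha,+,\gamma}\cdot(\Con{x_a}{x_b}R5^{\alpha,-,\gamma}\Con{x_a}{x_b}^{-1})^{-1}$ simplifies in $F$ to a commutator $[\rho,\Mulcomm{x_a^\alpha}{x_b^{-1}}{x_c^\gamma}]$ with $\rho=R5^{\alpha,+,\gamma}$ a relator, placing it in $[F,R]$. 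Second, the difference $\Elt{R6^{+,\beta,\gamma}}-\Elt{R6^{-,\beta,\gamma}}$ equals the class of the H2 relator $[\Mulcomm{x_a}{x_b^\beta}{x_c^\gamma},\Mulcomm{x_a^{-1}}{x_b^\beta}{x_c^\gamma}]$, because $R6^{+,\beta,\gamma}\cdot(R6^{-,\beta,\gamma})^{-1}$ telescopes directly to this commutator. Modulo sums of H2 classes, the R5 class then depends only on $(\alpha,\gamma)$ and the R6 class only on $(\beta,\gamma)$.

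After these reductions, the kernel of the abelianization map (restricted to the R5/R6 classes) is generated, modulo H2, by a single element for each $\gamma\in\{\pm 1\}$:
\[
k_\gamma=R5^{+,+,\gamma}\cdot R5^{-,+,\gamma}\cdot(R6^{+,+,\gamma})^{-1}\cdot(R6^{+,-,\gamma})^{-1}.
\]
The main obstacle is to show that each $\Elt{k_\gamma}$ is itself a sum of H2 classes. The plan is to substitute the R6 identity $\Mulcomm{x_a^{\pm 1}}{x_b^\beta}{x_c^\gamma}\,\Mulcomm{x_a^{\mp 1}}{x_b^\beta}{x_c^\gamma}=[\Con{x_a}{x_c}^{-\gamma},\Con{x_a}{x_b}^{-\beta}]$ to collapse the pairs inside $k_\gamma$ and use the R5 identity $\Con{x_a}{x_b}\,\Mulcomm{x_a^\alpha}{x_b}{x_c^\gamma}\,\Con{x_a}{x_b}^{-1}=\Mulcomm{x_a^\alpha}{x_b^{-1}}{x_c^\gamma}^{-1}$ to move conjugation moves past the remaining commutator transvections. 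After these rewrites, $k_\gamma$ should reorganize, modulo $[F,R]$, into an explicit product of H2 commutators of the shape $[\Mulcomm{x_a^\alpha}{\cdots}{\cdots},\Mulcomm{x_a^{-\alpha}}{\cdots}{\cdots}]$. This final step is a finite combinatorial check, similar in spirit to the computer-assisted verifications elsewhere in the paper, and combines with the earlier reductions to yield $\Elt{w}$ as a sum of H2 classes.
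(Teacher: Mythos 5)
Your overall strategy mirrors the paper's: abelianize, compute the kernel of the exponent-sum map, then check that each kernel generator is an H2 class modulo $[F,R]$. But there is a concrete gap in the bookkeeping. After R0-normalizing so that every commutator transvection in $w$ has the form $\Mulcomm{x_a^\alpha}{x_b^\beta}{x_c^\gamma}$, the R5 relations that can appear in $w$ come in two families, not one: those conjugating by $\Con{x_a}{x_b}$ (abelianizing to $M_{\alpha,\beta,\gamma}+M_{\alpha,-\beta,\gamma}$, which is what your $R5^{\alpha\beta\gamma}$ captures), \emph{and} those conjugating by $\Con{x_a}{x_c}$, arising from R5 with parameters $(a,c,b)$ and abelianizing to $M_{\alpha,\beta,\gamma}+M_{\alpha,\beta,-\gamma}$. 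Your parametrization omits the second family entirely — these are the relations $r_3,\dots,r_6$ in the paper's Table~\ref{table:labelrelations}. They are not redundant: for instance the abelianization $v_1+v_5$ of $r_3$ cannot be written in terms of the eight abelianized classes you do consider (in graph terms, your edge set on $\{v_1,\dots,v_8\}$ splits into two components $\{v_1,\dots,v_4\}$ and $\{v_5,\dots,v_8\}$, so no combination can produce an edge joining them).

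Consequently your count of ``two degrees of freedom'' is wrong: the full kernel $C$ of the $A\to B$ map has eight or nine generators, not two, and the additional generators — things like $r_1-r_3-r_5+r_7$, $r_2-r_4-r_6+r_8$, $-r_3-r_4+r_{13}+r_{15}$, and $r_1+r_2-r_5-r_6-r_{13}+r_{12}$ — mix both R5 families and need their own reductions to H2 classes. This is exactly where the paper resorts to a computer check (the list \texttt{kernellist}), after first cutting the nine generators down to three by symmetry. A second, lesser issue: even for the two kernel elements $k_\gamma$ you do identify, the final rewriting of $k_\gamma$ into a product of H2 commutators is asserted but not carried out; that is precisely the nontrivial verification that the paper delegates to GAP. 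To repair the proof you would need to (i) include the $\gamma$-flipping R5 relations in the abelianized linear algebra, and (ii) actually verify that each of the resulting kernel generators reduces to a sum of H2 classes modulo $[F,R]$.
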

\begin{proof}
\begin{table}[t!]
\begin{tabular}{cl}
Name & Generator \\
\hline
$v_1$ & $\Mulcomm{x_a}{x_b}{x_c}$\\
$v_2$ & $\Mulcomm{x_a^{-1}}{x_b}{x_c}$\\
$v_3$ & $\Mulcomm{x_a}{x_b^{-1}}{x_c}$ \\
$v_4$ & $\Mulcomm{x_a^{-1}}{x_b^{-1}}{x_c}$\\
$v_5$ & $\Mulcomm{x_a}{x_b}{x_c^{-1}}$\\
$v_6$ & $\Mulcomm{x_a^{-1}}{x_b}{x_c^{-1}}$\\
$v_7$ & $\Mulcomm{x_a}{x_b^{-1}}{x_c^{-1}}$\\
$v_8$ & $\Mulcomm{x_a^{-1}}{x_b^{-1}}{x_c^{-1}}$
\end{tabular}
\begin{tabular}{cl}
Name & Relation \\
\hline
$r_1$ & $\Con{x_a}{x_b}\Mulcomm{x_a}{x_b}{x_c}\Con{x_a}{x_b}^{-1}\Mulcomm{x_a}{x_b^{-1}}{x_c}$\\
$r_2$ & $\Con{x_a}{x_b}\Mulcomm{x_a^{-1}}{x_b}{x_c}\Con{x_a}{x_b}^{-1}\Mulcomm{x_a^{-1}}{x_b^{-1}}{x_c}$\\
$r_3$ & $\Con{x_a}{x_c}^{-1}\Mulcomm{x_a}{x_b}{x_c}\Con{x_a}{x_c}\Mulcomm{x_a}{x_b}{x_c^{-1}}$\\
$r_4$ & $\Con{x_a}{x_c}^{-1}\Mulcomm{x_a^{-1}}{x_b}{x_c}\Con{x_a}{x_c}\Mulcomm{x_a^{-1}}{x_b}{x_c^{-1}}$\\
$r_5$ & $\Con{x_a}{x_c}^{-1}\Mulcomm{x_a}{x_b^{-1}}{x_c}\Con{x_a}{x_c}\Mulcomm{x_a}{x_b^{-1}}{x_c^{-1}}$\\
$r_6$ & $\Con{x_a}{x_c}^{-1}\Mulcomm{x_a^{-1}}{x_b^{-1}}{x_c}\Con{x_a}{x_c}\Mulcomm{x_a^{-1}}{x_b^{-1}}{x_c^{-1}}$\\
$r_7$ & $\Con{x_a}{x_b}\Mulcomm{x_a}{x_b}{x_c^{-1}}\Con{x_a}{x_b}^{-1}\Mulcomm{x_a}{x_b^{-1}}{x_c^{-1}}$\\
$r_8$ & $\Con{x_a}{x_b}\Mulcomm{x_a^{-1}}{x_b}{x_c^{-1}}\Con{x_a}{x_b}^{-1}\Mulcomm{x_a^{-1}}{x_b^{-1}}{x_c^{-1}}$\\
$r_9$ & $\Mulcomm{x_a}{x_b}{x_c}\Mulcomm{x_a^{-1}}{x_b}{x_c}[\Con{x_a}{x_b}^{-1},\Con{x_a}{x_c}^{-1}]$ \\
$r_{10}$ & $\Mulcomm{x_a}{x_b^{-1}}{x_c}\Mulcomm{x_a^{-1}}{x_b^{-1}}{x_c}[\Con{x_a}{x_b},\Con{x_a}{x_c}^{-1}]$ \\
$r_{11}$ & $\Mulcomm{x_a}{x_b}{x_c^{-1}}\Mulcomm{x_a^{-1}}{x_b}{x_c^{-1}}[\Con{x_a}{x_b}^{-1},\Con{x_a}{x_c}]$ \\
$r_{12}$ & $\Mulcomm{x_a}{x_b^{-1}}{x_c^{-1}}\Mulcomm{x_a^{-1}}{x_b^{-1}}{x_c^{-1}}[\Con{x_a}{x_b},\Con{x_a}{x_c}]$ \\
$r_{13}$ & $\Mulcomm{x_a^{-1}}{x_b}{x_c}\Mulcomm{x_a}{x_b}{x_c}[\Con{x_a}{x_b}^{-1},\Con{x_a}{x_c}^{-1}]$ \\
$r_{14}$ & $\Mulcomm{x_a^{-1}}{x_b^{-1}}{x_c}\Mulcomm{x_a}{x_b^{-1}}{x_c}[\Con{x_a}{x_b},\Con{x_a}{x_c}^{-1}]$ \\
$r_{15}$ & $\Mulcomm{x_a^{-1}}{x_b}{x_c^{-1}}\Mulcomm{x_a}{x_b}{x_c^{-1}}[\Con{x_a}{x_b}^{-1},\Con{x_a}{x_c}]$ \\
$r_{16}$ & $\Mulcomm{x_a^{-1}}{x_b^{-1}}{x_c^{-1}}\Mulcomm{x_a}{x_b^{-1}}{x_c^{-1}}[\Con{x_a}{x_b},\Con{x_a}{x_c}]$ \\
\end{tabular}
\caption{
Labels for the eight commutator transvections using $x_a$, $x_b$ and $x_c$ in order, and for the sixteen R5 and R6 relations using these commutator transvections.
}\label{table:labelrelations}
\end{table}

Let $F'$ be the subgroup of $F$ generated by the eight commutator transvections involving 
$(x_a,x_b,x_c)$ and the two conjugation moves $\{\Con{x_a}{x_b},\Con{x_a}{x_c}\}$, and let $R' \subset F'$ be the normal closure in $F'$ of the R5 and R6 relations that
can be written as products of elements of $F'$.  
We thus have $w \in R' \cap [F',F']$.  
Our first
goal is to better understand $(R' \cap [F',F'])/[F',R']$ and $R' / [F',R']$.
Consider the exact sequence of abelian groups
\[0 \longrightarrow \frac{R'\cap[F',F']}{[F',R']} \longrightarrow \frac{R'}{[F',R']} \longrightarrow \frac{R'}{R'\cap[F',F']} \longrightarrow 0.\]
We find generators for the first group in the sequence by considering a related exact sequence of free abelian groups.

Let $v_1,\ldots,v_8$ be the eight commutator transvections in $S_{\IA}(n)$ that only involve $(x_a,x_b,x_c)$, enumerated
as in Table~\ref{table:labelrelations}.  
Similarly,
let $r_1,\ldots,r_8$ be the eight R5 relations in $F'$ and let $r_9,\ldots,r_{16}$ denote the eight R6 relations lying in $F'$,
enumerated as in Table~\ref{table:labelrelations}.
Let $A$ be the free abelian group freely generated by $r_1,\ldots, r_{16}$,
and let $B$ be the free abelian group freely generated by $v_1,\ldots,v_8$.
We consider the map $A\to B$ that counts the exponent-sum of each commutator transvection generator.
Let $C$ denote the kernel of this map and let $B'$ denote the image.

Since $R'$ is normally generated by relations $r_1,\ldots,r_{16}$, we know that $R'/[F',R']$ is generated by the 
images of these relations.
Thus there is a surjection $A\to R'/[F',R']$ that sends each basis element to the image of the relation with the same name.
The group $B$ is a subgroup of $F'/[F',F']$.
The natural map $R'/[F',R']\to F'/[F',F']$ counts exponent-sums of generators.
Since the generators $r_1,\ldots,r_{16}$ all have zero exponent-sum for conjugation move generators, 
we do not lose any information by counting only commutator transvection generators in $B$.
This means we have a commuting square
\[
\xymatrix{
A \ar[r]\ar[d] & B \ar[d] \\
\frac{R'}{[F',R']} \ar[r] & \frac{F'}{[F',F']}.
}
\]
The subgroup $B'$ thus maps surjectively onto $(R'[F',F'])/[F',F']$, which is isomorphic to $R'/(R'\cap[F',F'])$.
Therefore we have a commuting diagram with exact rows
\[
\xymatrix{
0 \ar[r] & C \ar[r] \ar[d]                & A\ar[r] \ar[d]         & B' \ar[r] \ar[d]            & 0  \\
0 \ar[r] & \frac{R'\cap[F',F']}{[F',R']}\ar[r] & \frac{R'}{[F',R']} \ar[r] & \frac{R'}{R'\cap[F',F']} \ar[r] & 0 \\
}
\]

The map $B'\to F'/[F',F']$ is injective, so $B'\to R'/(R'\cap[F',F'])$ is also injective.
By construction, $A\to R'/[F',R']$ is surjective.
It follows from a simple diagram chase that $C\to (R'\cap[F',F'])/[F',R']$ is surjective.

The map $A\to B$ is given by this $8\times 16$ matrix:
\[
\left(
\begin{array}{cccccccccccccccc}
1 & 0 & 1 & 0 & 0 & 0 & 0 & 0 & 1 & 0 & 0 & 0 & 1 & 0 & 0 & 0 \\ 
0 & 1 & 0 & 1 & 0 & 0 & 0 & 0 & 1 & 0 & 0 & 0 & 1 & 0 & 0 & 0 \\ 
1 & 0 & 0 & 0 & 1 & 0 & 0 & 0 & 0 & 1 & 0 & 0 & 0 & 1 & 0 & 0 \\ 
0 & 1 & 0 & 0 & 0 & 1 & 0 & 0 & 0 & 1 & 0 & 0 & 0 & 1 & 0 & 0 \\ 
0 & 0 & 1 & 0 & 0 & 0 & 1 & 0 & 0 & 0 & 1 & 0 & 0 & 0 & 1 & 0 \\ 
0 & 0 & 0 & 1 & 0 & 0 & 0 & 1 & 0 & 0 & 1 & 0 & 0 & 0 & 1 & 0 \\ 
0 & 0 & 0 & 0 & 1 & 0 & 1 & 0 & 0 & 0 & 0 & 1 & 0 & 0 & 0 & 1 \\ 
0 & 0 & 0 & 0 & 0 & 1 & 0 & 1 & 0 & 0 & 0 & 1 & 0 & 0 & 0 & 1 \\ 
\end{array}
\right)
\]	
A straightforward linear algebra computation shows that $C$, the kernel of this map, is generated by the following nine 
vectors:
\[r_1-r_3-r_5+r_7,\ r_2-r_4-r_6+r_8,\ -r_1-r_2+r_{13}+r_{14}\]
\[-r_3-r_4+r_{13}+r_{15}, r_1+r_2-r_5-r_6-r_{13}+r_{12},\]
\[r_9-r_{13},\ r_{10}-r_{14},\ r_{11}-r_{15},\text{ and } r_{12}-r_{16}.\]
Since $C$ surjects on $(R'\cap [F',F'])/[F',R']$, we know that $(R'\cap [F',F'])/[F',R']$ is generated by the images of these nine elements.

We will now describe calculations that show that each of the generators above is equivalent modulo $[F',R']$ to an H2 relation.
In each case, we find representatives in $R'\cap [F',F']$ of the image of the given element of $C$.
Since we are working modulo $[F',R']$ we may conjugate any $r_i$ in computing the representative.
In reducing to an H2 relation, we may also apply any relation at all, as long as we apply its inverse somewhere else.

The last four generators are easily equivalent to H2 relations.
We skip the second kernel generator because its image is equal to the first after inverting $x_a$, and we skip the 
fourth because because its image is equal to the third after swapping $x_b$ and $x_c$.
The three computations in the list \verb+kernellist+ finish the lemma by showing that the first, third and fifth generators 
are equivalent to H2 relations.
\end{proof}

\begin{proof}[Proof of Theorem~\ref{theorem:h2gen}]
We must show that every element of $\HIAT$ that happens to lie in $\HH_2(\IA_n)$ can be written as a sum
of elements of 
\[\Set{$\Elt{\ttheta(w)(r)}$}{$w \in S_{\Aut}(n)^{\ast}$ and $r$ is one of the relations H1--H9 from Table \ref{table:commutatorrelators}}.\]
Combining Lemmas \ref{lemma:hiat} and \ref{lemma:rewritethetaR5R6} with the fact that R0 and R5 and R6 are the only
relations in our L-presentation for $\IA_n$ that do not appear as one of the commutator relations
in Table \ref{table:commutatorrelators}, we see that it enough to deal with
sums of elements of the set
\[\Set{$\Elt{r}$}{$r$ is one of the relations R0, R5, and R6}.\]
So consider $\Elt{w} \in \HH_2(\IA_n)$ that can be written
\[\Elt{w} = \sum_{i=1}^m \Elt{r_i}\]
with each $r_i$ either an R0 or R5 or R6 relation.

For any choice of distinct $1\leq a,b,c\leq n$, we consider the commutator transvection generators
involving $(x_a, x_b,x_c)$ or $(x_a,x_c,x_b)$, and the R5, R6 and R0 relations involving only these commutator transvections.
We write 
\[\Elt{w}=\sum_{i=1}^{n+\binom{n-1}{2}}\Elt{w_i},\]
where each $\Elt{w_i}$ is a sum of R5, R6 and R0 relations involving only a single choice of $(x_a,\{x_b,x_c\})$.
To prove the theorem, it is enough to show that we can write each of the $\Elt{w_i}$ as a sum of our generators for $\HH_2(\IA_n)$.
So we assume $\Elt{w}=\Elt{w_i}$ for some $i$; this amounts to fixing a choice of $(x_a,\{x_b,x_c\})$ and assuming $\Elt{w}$ is a sum of R0, R5 and R6 generators using only commutator transvections involving this triple.

We call a commutator transvection $\Mulcomm{x_i^\alpha}{x_j^\beta}{x_k^\gamma}$ \emph{positive}
if $j<k$, and \emph{negative} otherwise.
Each R5 or R6 relation contains two positive commutator transvections (and no negative ones), or two negative ones (and no positive ones).
Suppose $\Elt{r_i}$ is an R5 or R6 relation with negative generators, appearing in the sum defining $\Elt{w}$.
By inserting an R0 relation and its inverse into $r_i$, we replace both of the negative generators with positive ones.
Let $r_i'$ denote the word we get by doing this to $r_i$.
Since we have added and subtracted the same element in $\HIAT$, we have $\Elt{r_i'}=\Elt{r_i}$.
Modifying an R5 or R6 relation in this way gives us the inverse of an R5 or R6 relation involving the same $(x_a,\{x_b,x_c\})$, up to cyclic permutation of the relation.
So we interpret this move as rewriting the sum defining $\Elt{w}$: we replace the relation $\Elt{r_i}$ with the new relation $\Elt{r_i'}$, which is an R5 or R6 relation without negative commutator transvections.
We proceed to eliminate all the negative commutator transvections in R5 and R6 relations in $\Elt{w}$ this way.

Having done this, the only negative commutator transvections the sum defining $\Elt{w}$ appear in R0 relations.
Since $\Elt{w}\in \HH_2(\IA_n)$, the negative generators appear with exponent-sum zero; so the R0 relations appear in inverse pairs.
This means that we can simply rewrite the sum without any R0 relations.
So $\Elt{w}$ is a sum of R5 and R6 relations whose only commutator transvections are positive ones involving $(x_a,\{x_b,x_c\})$.
Then $\Elt{w}$ satisfies the hypotheses of Lemma~\ref{lemma:R5R6H2} and therefore is a sum of H2 generators.
\end{proof}

\section{Coinvariants and congruence subgroups}
\label{section:finalcalcs}

This section contains the proofs of Theorems~\ref{maintheorem:coinvariants} and \ref{maintheorem:congruence}, which
can be found in \S \ref{section:coinvariants} and \ref{section:congruence}, respectively.  Both of these
proofs depend on calculations that are contained in \S \ref{section:glnzaction}.

\subsection{The action of \texorpdfstring{$\GL_n(\Z)$}{GLn(Z)} on \texorpdfstring{$\HH_2(\IA_n)$}{H2(IAn)}}
\label{section:glnzaction}

This section is devoted to understanding the action of $\GL_n(\Z)$ on our generators for
$\HH_2(\IA_n)$.
The results in this section consist of long lists of equations that are verified by a computer,
so on their first pass a reader might want to skip to the next two sections to see how they are used.
For $i=1,\dotsc,9$, we use the notation $h_i(x_{a_1}^{\alpha_1},\dotsc,x_{a_{k_i}}^{\alpha_{k_i}})\in \HH_2(\IA_n)$ for the image in $\HH_2(\IA_n)$ of the $i^{\text{th}}$ relation from from $S_H(n)$, with the given parameters, as specified in Table~\ref{table:commutatorrelators}.
Since the action of $\GL_n(\Z)$ is induced from the action of $\Aut(F_n)$, we record the action of various $\Aut(F_n)$ generators on these generators.  

The computations justifying Lemmas~\ref{lemma:7pt2}--\ref{lemma:7pt6} are in the file \verb+h2ia.g+.
We use the Hopf isomorphism $\HH_2(\IA_n)\cong (R\cap[F,F])/[F,R]$, where $F=F(S_{\IA}(n))$ and $R<F$ is the group of relations of $\IA_n$. 
We justify these equations by performing computations in $R\cap[F,F]\subset F$.
In each computation, we start with a word representing one side of the equation and reduce to the trivial word using words representing the other side.
Since $[F,R]$ is trivial, we may use any relations in inverse pairs, we may apply relations from in any order, and we may cyclically permute relations.

We note some identities, which we leave as an exercise.
\begin{lemma} 
\label{lemma:7pt1}
The following identities hold in $\HH_2(\IA_n)$.  The letters in subscripts are assumed distinct unless otherwise noted.
\begin{compactitem}
\item $h_1(x_a^\alpha,x_b^\beta,x_c^\gamma,x_b^\delta)=-h_1(x_a^\alpha,x_b^\beta,x_c^\gamma,x_b^{-\delta})$,
even if $b=d$.
\item $h_3(x_a^\alpha,x_b^\beta,x_c^\gamma,x_b^\delta,x_e^\epsilon)=-h_3(x_a^\alpha,x_b^\beta,x_c^\gamma,x_b^\delta,x_e^{-\epsilon})$,
even if $b=e$ or $c=e$.
\item $h_3(x_a^\alpha,x_b^\beta,x_c^\gamma,x_b^\delta,x_e^\epsilon)=-h_3(x_a^\alpha,x_c^\gamma,x_b^\beta,x_b^\delta,x_e^\epsilon)$,
even if $b=e$ or $c=e$.
\end{compactitem}
\end{lemma}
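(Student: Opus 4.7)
The plan is to derive all three identities from the bilinearity of the commutator pairing on $\HH_2(\IA_n)$. Working with the presentation $\HH_2(\IA_n) \cong (R \cap [F,F])/[F,R]$ supplied by Lemma \ref{lemma:hiat}, each class $h_i$ is represented by a commutator $[u,v] \in R$. The commutator identity $[u, vw] = [u,v] \cdot v[u,w] v^{-1}$ reduces, modulo $[F,R]$, to the bilinearity relation $\Elt{[u, vw]} = \Elt{[u,v]} + \Elt{[u,w]}$, valid whenever $[u,v]$, $[u,w]$, and $[u, vw]$ all lie in $R$. Taking $w = v^{-1}$ (so that $[u, vv^{-1}] = 1$) yields $\Elt{[u, v^{-1}]} = -\Elt{[u,v]}$, and the analogous identity $\Elt{[u^{-1}, v]} = -\Elt{[u,v]}$ holds in the first slot. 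These two principles drive the entire proof.

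The first two identities then drop out immediately. For the first, $h_1(x_a^\alpha,x_b^\beta,x_c^\gamma,x_d^\delta) = \Elt{[\Con{x_a}{x_b}^\beta, \Con{x_c}{x_d}^\delta]}$, so negating $\delta$ replaces the second conjugation move by its inverse and flips the sign of the class. The degenerate case $b = d$ is harmless: the two conjugation moves still commute in $\IA_n$ (both are inner automorphisms conjugating by powers of $x_b$), so the intermediate commutators remain in $R$ and bilinearity still applies. The second identity is parallel: $h_3$ is a class of the form $\Elt{[\Mulcomm{x_a^\alpha}{x_b^\beta}{x_c^\gamma}, \Con{x_b}{x_e}^\epsilon]}$, and negating $\epsilon$ inverts the conjugation move in the second slot. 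In the degenerate cases $b = e$ or $c = e$, relation H3 continues to certify that the commutator is a relation, so bilinearity still applies.

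The third identity uses relation R0, which gives $\Mulcomm{x_a^\alpha}{x_c^\gamma}{x_b^\beta} = \Mulcomm{x_a^\alpha}{x_b^\beta}{x_c^\gamma}^{-1}$ already as an identity in the L-presentation $\Quotient_n$. Swapping the second and third arguments of $h_3$ therefore inverts the first slot of the underlying commutator, and the first-slot form of the bilinearity principle negates the class. I do not foresee a real obstacle; the only mild point to verify is that, in each degenerate configuration permitted by the statement, the auxiliary commutators (like $[u, vv^{-1}] = 1$) genuinely lie in $R$. This is immediate because the trivial word is always a relation and the non-trivial commutators of interest are precisely the relations H1 and H3 whose existence is asserted in Table \ref{table:commutatorrelators}.
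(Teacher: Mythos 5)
The paper gives no proof of this lemma (it is explicitly ``left as an exercise''), so there is no argument to compare against; your bilinearity computation in $\HIAT=R/[F,R]$ is the right idea and it carries all three identities.

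One thing you should have flagged: the statement as printed has $x_b^\delta$ as the fourth argument of $h_1$ and of $h_3$, yet the side remarks ``even if $b=d$'' and ``even if $b=e$ or $c=e$'' reference a $d$ that never appears, so $x_b^\delta$ is almost certainly a typo for $x_d^\delta$. Your reading is literal, and with the literal reading your $h_3$ ``relator'' $[\Mulcomm{x_a^\alpha}{x_b^\beta}{x_c^\gamma},\Con{x_b}{x_e}^\epsilon]$ is \emph{not} a relation in $\IA_n$: $\Con{x_b}{x_e}$ rewrites $x_b$, which appears inside the commutator $[x_b^\beta,x_c^\gamma]$ produced by the transvection, and applying both orders to $x_a$ gives $[x_b^\beta,x_c^\gamma]x_a^\alpha$ versus $[x_e^\epsilon x_b^\beta x_e^{-\epsilon},x_c^\gamma]x_a^\alpha$, which differ when $c\neq e$. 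This is exactly what the H3 constraint $a\notin\{c,d,e\}$ (in the notation of Table~\ref{table:basicrelsian}) rules out. So under the literal reading $h_3(x_a^\alpha,\dotsc,x_b^\delta,x_e^\epsilon)$ is not even defined; with the fourth slot corrected to $x_d^\delta$ (so the conjugation move is $\Con{x_d}{x_e}$ with $d\notin\{a,b,c\}$) everything in your argument goes through.

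Two smaller points. Your parenthetical that $\Con{x_a}{x_b}$ and $\Con{x_c}{x_b}$ ``are inner automorphisms conjugating by powers of $x_b$'' is wrong: these are partial conjugations, not inner (an inner automorphism by $x_b$ conjugates every generator). They do commute, but because their supports are disjoint, not because they are inner. And for the third identity, be precise: in $F(S_{\IA}(n))$ the words $\Mulcomm{x_a^\alpha}{x_c^\gamma}{x_b^\beta}$ and $\Mulcomm{x_a^\alpha}{x_b^\beta}{x_c^\gamma}^{-1}$ are distinct free generators, not literally equal; what R0 buys you is that their product lies in $R$, so $\Elt{[u_1u_2,v]}=0$ because $[u_1u_2,v]\in[F,R]$, and bilinearity then yields $\Elt{[u_1,v]}=-\Elt{[u_2,v]}$. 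You gestured at this, but the role of $R$ should be made explicit rather than treating R0 as an equality of letters.
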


We also need the following, which is not obvious.
\begin{lemma}
\label{lemma:7pt2}
The following identities hold in $\HH_2(\IA_n)$.  The letters in subscripts are assumed distinct unless otherwise noted.
\begin{compactitem}
\item 
$\begin{aligned}[t]
h_6(x_a^\alpha,x_e^\epsilon,x_c^\gamma,x_d^\delta,x_e^\epsilon) =\ &h_6(x_a^\alpha,x_c^\gamma,x_e^{-\epsilon},x_d^\delta,x_e^\epsilon)\\
&-h_7(x_a^\alpha,x_e^\epsilon,x_d^\delta,x_e^\epsilon)-h_7(x_a^\alpha,x_e^{-\epsilon},x_d^\delta,x_e^\epsilon).\end{aligned}$
\eqnum\label{eq:h6secondparam}
\item 
$h_6(x_a^\alpha,x_b^\beta,x_c^\gamma,x_d^\delta,x_e^\epsilon) = -h_6(x_a^\alpha,x_c^\gamma,x_b^\beta,x_d^\delta,x_e^\epsilon)$,
even if $b=e$ or $c=e$. \eqnum\label{eq:h6secondthird}
\end{compactitem}
\end{lemma}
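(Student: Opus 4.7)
The plan is to work via the Hopf isomorphism $\HH_2(\IA_n) \cong (R \cap [F,F])/[F,R]$ and verify each identity by writing the difference of its two sides as an explicit element of $R \cap [F,F]$ and then reducing it to the empty word by inserting relations of $\IA_n$ in inverse pairs. Since any word inserted together with its inverse lies in $[F,R]$, such a reduction certifies the identity in $\HH_2(\IA_n)$. This is the same framework used throughout Section~\ref{section:somecomputations} and in the proof of Lemma~\ref{lemma:R5R6H2}.

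For equation~\eqref{eq:h6secondparam}, the key observation is that $h_6(x_a^\alpha, x_e^\epsilon, x_c^\gamma, x_d^\delta, x_e^\epsilon)$ is the degenerate specialization of H6 in which the second parameter coincides with the fifth. Under this specialization, several of the commutator transvections appearing in the H6 relator carry $e$ as both an inner and an outer index. These ``non-primitive'' factors can be rewritten using the R9/H7 relation; each such rewriting introduces, modulo $[F,R]$, exactly one $h_7$-type term of the shape appearing on the right-hand side. After replacing both problematic commutator transvections in this way and collecting the residual H6-type word that remains, one obtains $h_6(x_a^\alpha, x_c^\gamma, x_e^{-\epsilon}, x_d^\delta, x_e^\epsilon)$ plus precisely the two $h_7$ corrections stated.

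For equation~\eqref{eq:h6secondthird}, I would start from the H6 relators for the parameter tuples $(x_a^\alpha, x_b^\beta, x_c^\gamma, x_d^\delta, x_e^\epsilon)$ and $(x_a^\alpha, x_c^\gamma, x_b^\beta, x_d^\delta, x_e^\epsilon)$. Swapping the second and third parameters reverses the roles of $\Mulcomm{x_a^\alpha}{x_b^\beta}{x_c^\gamma}$ with $\Mulcomm{x_a^\alpha}{x_c^\gamma}{x_b^\beta}$ and of $\Mulcomm{x_d^\delta}{x_c^\gamma}{x_b^\beta}$ with $\Mulcomm{x_d^\delta}{x_b^\beta}{x_c^\gamma}$; these are pairwise inverse via R0. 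The plan is to add the two H6 relators, use R0 to identify the inverted commutator transvections, and then shuffle the remaining conjugation moves (which appear only in the third commutator factor of H6) using R1--R3 until the whole expression is a product of inverse pairs of relations. The degenerate cases $b=e$ or $c=e$ require slightly different intermediate words but follow the same pattern.

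The main obstacle in both parts is finding the precise sequences of insertions that carry out these reductions. Because R5, R6, R8, and R9 are long and the intermediate expressions grow rapidly, bookkeeping by hand is impractical and error-prone; this is exactly why the authors defer similar identities in Section~\ref{section:somecomputations} to a computer check. The concrete reduction is therefore best expressed inside the \verb+applyrels+ framework of the file \verb+h2ia.g+, where each inserted relation is explicitly paired with its cancelling inverse elsewhere in the word so that the net effect lies in $[F,R]$ by inspection.
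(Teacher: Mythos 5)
Your overall strategy---compute in $R\cap[F,F]$ modulo $[F,R]$ via the Hopf isomorphism, and reduce the difference of the two sides of each identity to the trivial word by inserting relations in cancelling pairs, with the bookkeeping done in the \texttt{applyrels} framework of \texttt{h2ia.g}---is exactly the paper's approach; the paper's proof of this lemma is literally a one-line pointer to the list \texttt{lemma7pt2} in the code, which records precisely such reductions.

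That said, the verbal heuristic you offer for equation~\eqref{eq:h6secondparam} is not accurate. Specializing the H6 relator to $x_b^\beta = x_e^\epsilon$ produces the commutator transvections $\Mulcomm{x_a^\alpha}{x_e^\epsilon}{x_c^\gamma}$, $\Mulcomm{x_d^\delta}{x_a^\alpha}{x_e^\epsilon}$, and $\Mulcomm{x_d^\delta}{x_c^\gamma}{x_e^\epsilon}$, and none of these carries $e$ as both an outer (transvected) and an inner (commutator) index; indeed such a generator would not lie in $S_{\IA}(n)$ at all, since the three indices in $\Mulcomm{x_i^\alpha}{x_j^\beta}{x_k^\gamma}$ are required to be distinct. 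So there are no ``non-primitive'' factors to clean up with R9/H7 in the way you describe, and the proposed mechanism for producing the two $h_7$ correction terms does not match the structure of the relator. Since you ultimately defer the verification to the computer (as the paper does), this does not undermine the conclusion, but as a standalone heuristic for why that identity holds, the stated key observation is wrong and would need to be replaced by the actual manipulation recorded in \texttt{lemma7pt2}. The sketch for~\eqref{eq:h6secondthird} (add the two H6 relators, use R0 to identify the inverted commutator transvections, and shuffle conjugation moves with R1--R3) is directionally plausible and consistent with the relator.
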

\begin{proof}
Computations justifying these equations appear in the list \verb+lemma7pt2+.
\end{proof}

We proceed by expressing the action of many elementary matrices from $\GL_n(\Z)$ on our generators.
\begin{lemma}
The following identities hold in $\HH_2(\IA_n)$.  The letters in subscripts are assumed distinct unless otherwise noted.
\begin{compactitem}

\item $\Mul{x_b^\beta}{x_e}^\epsilon \cdot h_1(x_a^\alpha,x_b^\beta,x_c^\gamma,x_d^\delta) =
h_1(x_a^\alpha,x_b^\beta,x_c^\gamma,x_d^\delta)+h_1(x_a^\alpha,x_e^\epsilon,x_c^\gamma,x_d^\delta)$.
\eqnum\label{eq:generich1}

\item $\Mul{x_b^\beta}{x_e}^\epsilon \cdot h_1(x_a^\alpha,x_e^\epsilon,x_c^\gamma,x_d^\delta) =
h_1(x_a^\alpha,x_e^\epsilon,x_c^\gamma,x_d^\delta)$.
\eqnum\label{eq:generich1fixed}

\item $\Mul{x_b^\beta}{x_d}^\delta\cdot h_1(x_a^\alpha,x_b^\beta,x_c^\gamma,x_d^\delta)
= h_1(x_a^\alpha,x_b^\beta,x_c^\gamma,x_d^\delta) + h_1(x_a^\alpha,x_d^\delta,x_c^\gamma,x_d^\delta)$.
\eqnum\label{eq:specialh1}

\item $\Mul{x_a^\alpha}{x_e}^{-\epsilon} \cdot h_1(x_a^\alpha,x_b^\beta, x_c^\gamma,x_d^\delta) 
=
h_1(x_a^\alpha,x_b^\beta, x_c^\gamma,x_d^\delta)  - h_3(x_a^\alpha,x_e^{\epsilon},x_b^\beta,x_c^\gamma,x_d^\delta),$\eqnum\label{eq:h3}
\\
even if $x_e^\epsilon=x_d^\delta$.

\item $\begin{aligned}[t]
\Mul{x_d^\delta}{x_e}^{-\epsilon}\cdot h_3(x_a^\alpha,x_b^\beta,x_c^\gamma,x_d^\delta, x_f^\zeta)
=\ &h_3(x_a^\alpha,x_b^\beta,x_c^\gamma,x_d^\delta, x_f^\zeta)\\
&-h_2(x_a^\alpha,x_b^\beta,x_c^\gamma,x_d^\delta, x_e^\epsilon,x_f^\zeta),
\end{aligned}$\eqnum\label{eq:generich2}\\
even if $\{b,c\}\cap\{e,f\}\neq\varnothing$.

\item $\begin{aligned}[t]
\Mul{x_a^{-\alpha}}{x_d}^\delta \cdot h_2(x_a^\alpha,x_b^\beta,x_c^\gamma,x_d^{-\delta},x_f^\zeta,x_e^\epsilon)
=\ &h_2(x_a^\alpha,x_b^\beta,x_c^\gamma,x_d^{-\delta},x_f^\zeta,x_e^\epsilon)\\
   &-h_2(x_a^\alpha,x_b^\beta,x_c^\gamma,x_a^{-\alpha},x_e^\epsilon, x_f^\zeta),
\end{aligned}$\eqnum\label{eq:bothsidesh2} \\
even if $\{b,c\}\cap\{e,f\}\neq \varnothing$.

\end{compactitem}
\end{lemma}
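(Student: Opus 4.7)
The plan is to work throughout in $(R\cap[F,F])/[F,R]$, using the Hopf isomorphism $\HH_2(\IA_n)\cong (R\cap[F,F])/[F,R]$ from Lemma \ref{lemma:hiat}. For each of the six identities, I would represent each $h_i(\cdots)$ by the corresponding commutator relator from Table \ref{table:commutatorrelators}, so that the claim becomes an equation in $R\cap[F,F]$ modulo $[F,R]$. The action of a transvection $s\in S_{\Aut}(n)$ on a class in $\HH_2(\IA_n)$ is induced (via Proposition \ref{proposition:autaction}) by the substitution endomorphism $\ttheta(s)\in\End(F(S_{\IA}(n)))$: if $r\in R\cap[F,F]$ represents a homology class, then $s$ sends that class to the class of $\ttheta(s)(r)$. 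This is because $\pi\co \Quotient_n\to\IA_n$ is $\Aut(F_n)$-equivariant and the $\IA_n$-action on $\Quotient_n$ provided by Proposition \ref{proposition:autaction} is by inner automorphisms, so conjugating $r$ by $\pi(r)$ has trivial effect modulo $[F,R]$.

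The first step of each verification is to expand $\ttheta(s)(r)$ by substituting into each letter of $r$ according to the appropriate row of Table \ref{table:thetadef}. The second step is to exhibit a sequence of insertions of defining relations R0--R9 (together, for convenience, with the auxiliary relations supplied by \verb+exiarel+) that reduces the product of $\ttheta(s)(r)$ with the inverse of the RHS word to the empty word. Since we are working modulo $[F,R]$, relations may be conjugated freely and inserted in inverse pairs at any point. The shape of each target equation suggests the shape of the reduction: for \eqref{eq:generich1}--\eqref{eq:specialh1}, the rows of Table \ref{table:thetadef} substituting for $\Con{x_c}{x_a}$ and $\Con{x_b}{x_a}$ under $\Mul{x_b^\beta}{x_e}^\epsilon$ produce extra conjugation-move factors whose commutator with the untouched $\Con{x_c}{x_d}^\delta$ is exactly the additional $h_1$ term, after a shuffle using R1 and R3; for \eqref{eq:h3}, the substitution for $\Con{x_a}{x_c}$ introduces a commutator-transvection factor $\Mulcomm{\cdot}{\cdot}{\cdot}$, which combines via R3 and R7 with the remaining conjugation moves to yield the $h_3$ term; for \eqref{eq:generich2} and \eqref{eq:bothsidesh2}, the commutator-transvection rows of Table \ref{table:thetadef} produce factors that recombine into H2-type commutators.

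The main obstacle is bookkeeping: some entries in Table \ref{table:thetadef} are eight-letter words, so $\ttheta(s)(r)$ for a typical length-eight relator $r$ can expand to over forty letters before reduction, and organizing the necessary cancellations by hand is error-prone. Mirroring the proof of Lemma \ref{lemma:7pt2} and the overall strategy of \S\ref{section:somecomputations}, I would carry out these reductions mechanically in the GAP script \verb+h2ia.g+, using \verb+applyrels+ to insert the appropriate basic relations R0--R9 at prescribed positions and checking in each case that the resulting word reduces to the empty word. Each of the six identities then becomes a single verified entry in the script, entirely analogous to the entries already recorded in \verb+lemma7pt2+.
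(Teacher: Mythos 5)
Your proposal matches the paper's approach essentially exactly. The paper's proof of this lemma is a one-line citation to the list \texttt{lemma7pt3} in the GAP file \texttt{h2ia.g}, and the overarching methodology — the Hopf isomorphism $\HH_2(\IA_n)\cong(R\cap[F,F])/[F,R]$, representing each $h_i$ by a commutator relator, computing the $\GL_n(\Z)$-action via the substitution endomorphisms $\ttheta(\cdot)$ of Table \ref{table:thetadef}, and reducing the resulting words to the empty word by \texttt{applyrels}-style insertions of R0--R9 and the auxiliary \texttt{exiarel} relations modulo $[F,R]$ — is precisely the computational framework the paper lays out at the start of \S\ref{section:glnzaction} and uses for all of Lemmas \ref{lemma:7pt2}--\ref{lemma:7pt6}.
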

\begin{proof}
These computations appear in \verb+lemma7pt3+.
The equations where coincidences are allowed are justified in several different computations.
\end{proof}

\begin{lemma}
The following identities hold in $\HH_2(\IA_n)$.  The letters in subscripts are assumed distinct.
\begin{compactitem}
\item $\Mul{x_d^\delta}{x_b}^\beta\cdot h_4(x_a^\alpha,x_d^\delta,x_c^\gamma) =
h_4(x_a^\alpha,x_d^\delta,x_c^\gamma) +h_4(x_a^\alpha,x_b^\beta,x_c^\gamma)$.
\eqnum\label{eq:h4}

\item $\Mul{x_d^\delta}{x_b}^\beta\cdot h_4(x_a^\alpha,x_b^\beta,x_c^\gamma) =
h_4(x_a^\alpha,x_b^\beta,x_c^\gamma)$.
\eqnum\label{eq:fixedh4}

\item $\begin{aligned}[t]
\Mul{x_b^\beta}{x_c}^{-\gamma}\cdot h_1(x_a^\alpha,x_b^\beta,x_c^\gamma,x_d^\delta)
=\ &h_1(x_a^\alpha,x_b^\beta,x_c^\gamma,x_d^\delta) - h_3(x_b^\beta,x_d^\delta,x_c^\gamma,x_a^\alpha,x_c^\gamma)\\
   &+h_5(x_a^\alpha,x_b^\beta,x_c^\gamma,x_d^\delta)+h_4(x_c^\gamma,x_d^\delta,x_a^\alpha).
\end{aligned}$\eqnum\label{eq:h5}

\item $\begin{aligned}[t]
\Mul{x_d^\delta}{x_e}^\epsilon \cdot h_8(x_a^\alpha,x_b^\beta,x_c^\gamma,x_d^\delta)
=\ &h_8(x_a^\alpha,x_b^\beta,x_c^\gamma,x_d^\delta) + h_8(x_a^\alpha,x_b^\beta,x_c^\gamma,x_e^\epsilon)\\
   &+\text{(H1 generators).}\end{aligned}$\eqnum\label{eq:h8}

\item $\Mul{x_d^\delta}{x_e}^\epsilon \cdot h_8(x_a^\alpha,x_b^\beta,x_c^\gamma,x_e^\epsilon) =
h_8(x_a^\alpha,x_b^\beta,x_c^\gamma,x_e^\epsilon)$.
\eqnum\label{eq:h8fixed} 

\end{compactitem}
\end{lemma}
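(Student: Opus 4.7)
The plan is to verify each identity by an explicit computation in $R \cap [F,F] \subset F$, using the Hopf isomorphism $\HH_2(\IA_n) \cong (R \cap [F,F])/[F,R]$ exactly as in the preceding lemmas. For each equation I would pick word representatives from $F$ of the $h_i$ terms on each side (using the commutator relators of Table \ref{table:commutatorrelators}), apply $\ttheta$ of the relevant Nielsen transvection to the left-hand representative using Table \ref{table:thetadef}, and reduce the difference of the two sides to the trivial word by inserting matched inverse pairs of the basic relations R0--R9, the extended relations from \verb+exiarel+, and the commutator relators H1--H9. Since $[F,R]$ is trivial in the quotient, any relation may be applied anywhere provided its inverse appears somewhere else, and any word may be cyclically permuted at will; this is the same template used to justify Lemmas \ref{lemma:7pt2} and beyond.

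The two ``fixed'' identities \eqref{eq:fixedh4} and \eqref{eq:h8fixed} should fall out almost immediately from inspection of $\ttheta$: the generators $\Mul{x_d^\delta}{x_b}^\beta$ and $\Mul{x_d^\delta}{x_e}^\epsilon$ fix the inner conjugation moves and commutator transvection appearing in the respective H4 and H8 relators up to conjugation by factors which cancel when the outer commutator is expanded, and the cleanup is handled by R1--R5. Equation \eqref{eq:h4} is a ``parallel transport'' statement: each conjugation move $\Con{x_\ast}{x_d}$ inside $h_4(x_a^\alpha,x_d^\delta,x_c^\gamma)$ picks up a parallel $\Con{x_\ast}{x_b}$ factor under $\ttheta(\Mul{x_d^\delta}{x_b}^\beta)$ (per the relevant rows of Table \ref{table:thetadef}), and the collected $x_b^\beta$-factors assemble, modulo R1 and R4, into a second copy $h_4(x_a^\alpha,x_b^\beta,x_c^\gamma)$. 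Equation \eqref{eq:h8} is similar in spirit: applying $\ttheta$ to the H8 relator produces a parallel H8 term plus cross-terms between conjugation moves ``acting in different places,'' and the latter assemble into H1 generators, which is exactly what the ``(H1 generators)'' summand records.

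The main obstacle will be \eqref{eq:h5}, since its right-hand side mixes four distinct generator types ($h_1$, $h_3$, $h_5$, $h_4$). When $\ttheta(\Mul{x_b^\beta}{x_c}^{-\gamma})$ is applied to the H1 representative of $h_1(x_a^\alpha,x_b^\beta,x_c^\gamma,x_d^\delta)$, both $\Con{x_a}{x_b}^\beta$ and $\Con{x_c}{x_d}^\delta$ expand into longer words carrying commutator transvections (through the appropriate entries of Table \ref{table:thetadef}); the delicate point is to regroup the resulting word so that the commutator transvection pieces peel off as $h_5$ and the correction $-h_3$, leaving behind precisely one $h_4$ relator and the original $h_1$. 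Several superficially distinct reorderings of the cross-terms are possible, but only one produces exactly the stated combination, and verifying it requires coordinated use of R3, R4, R5 and the extended relations of \verb+exiarel+. Following the established pattern, I would carry out the reduction in GAP via \verb+applyrels+ and record the five computations in a list analogous to \verb+lemma7pt3+, so that each identity is mechanically verified.
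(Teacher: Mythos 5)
Your proposal matches the paper's approach exactly: the paper's proof of this lemma is a single sentence pointing to the GAP computations in the list \verb+lemma7pt4+, which reduce $\ttheta(\Mul{x_d^\delta}{x_b}^\beta)$ (resp.\ $\ttheta(\Mul{x_d^\delta}{x_e}^\epsilon)$, $\ttheta(\Mul{x_b^\beta}{x_c}^{-\gamma})$) applied to the relevant relator word to the trivial word modulo $[F,R]$ using \verb+applyrels+, just as you describe. Your intuitive gloss on each identity is a reasonable reading of what the mechanical reduction accomplishes, and the implementation plan is the same as the paper's.
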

\begin{proof}
These computations appear in \verb+lemma7pt4+.
\end{proof}

\begin{lemma}
The following identities hold in $\HH_2(\IA_n)$.  The letters in subscripts are assumed distinct.
\begin{compactitem}

\item $\begin{aligned}[t]
\Mul{x_f^\zeta}{x_e}^{\epsilon}\cdot h_6(x_a^\alpha,x_b^\beta,x_c^\gamma,x_d^\delta,x_f^\zeta)
=\ &h_6(x_a^\alpha,x_b^\beta,x_c^\gamma,x_d^\delta,x_f^\zeta)\\
   &+h_6(x_a^\alpha,x_b^\beta,x_c^\gamma,x_d^\delta,x_e^\epsilon).\end{aligned}$
\eqnum\label{eq:generich6}

\item $\Mul{x_f^\zeta}{x_e}^{\epsilon}\cdot h_6(x_a^\alpha,x_b^\beta,x_c^\gamma,x_d^\delta,x_e^\epsilon)
= h_6(x_a^\alpha,x_b^\beta,x_c^\gamma,x_d^\delta,x_e^\epsilon).$
\eqnum\label{eq:genericfixedh6}

\item $\begin{aligned}[t]
\Mul{x_c^\gamma}{x_e}^{-\epsilon}\cdot h_7(x_a^\alpha,x_b^\beta,x_e^\epsilon,x_d^\delta)
=\ &h_7(x_a^\alpha,x_b^\beta,x_e^\epsilon,x_d^\delta) +h_7(x_a^\alpha,x_b^\beta,x_c^\gamma,x_d^\delta)\\
   &+h_6(x_e^\epsilon,x_b^\beta,x_a^\alpha,x_c^\gamma,x_d^\delta)\\
   &+\text{(H1, H2, and H3 generators).}\end{aligned}$
\eqnum\label{eq:generich7}

\item $\begin{aligned}[t]
\Mul{x_c^\gamma}{x_e}^{-\epsilon}\cdot h_7(x_a^\alpha,x_b^\beta,x_c^\gamma,x_d^\delta)
=\ &h_7(x_a^\alpha,x_b^\beta,x_c^\gamma,x_d^\delta)\\
   &+h_2(x_c^{-\gamma},x_e^{-\epsilon},x_d^{-\delta},x_c^\gamma,x_b^\beta,x_a^\alpha).\end{aligned}$
\eqnum\label{eq:genericfixedh7}

\item $\begin{aligned}[t]
\Mul{x_d^\delta}{x_b}^\beta\cdot h_7(x_a^\alpha,x_d^\delta,x_c^\gamma,x_d^\delta) 
=\ &h_7(x_a^\alpha,x_d^\delta,x_c^\gamma,x_d^\delta) + h_7(x_a^\alpha,x_b^\beta,x_c^\gamma,x_b^\beta)\\
   &+ h_7(x_a^\alpha,x_b^\beta,x_c^\gamma,x_d^\delta) + h_7(x_a^\alpha,x_d^\delta,x_c^\gamma,x_b^\beta)\\
   &+\text{(H1 generators).}\end{aligned}$
\eqnum\label{eq:part1h7}

\item $\Mul{x_d^\delta}{x_b}^\beta\cdot h_7(x_a^\alpha,x_b^\beta,x_c^\gamma,x_b^\beta) =
h_7(x_a^\alpha,x_b^\beta,x_c^\gamma,x_b^\beta).$
\eqnum\label{eq:part1fixedh7}

\item $\begin{aligned}[t]
\Mul{x_d^\delta}{x_b}^\beta \cdot h_7(x_a^\alpha,x_d^{-\delta},x_c^\gamma,x_d^\delta)
=\ &h_7(x_a^\alpha,x_d^{-\delta},x_c^\gamma,x_d^\delta) + h_7(x_a^\alpha,x_b^{-\beta},x_c^\gamma,x_b^\beta)\\
   &+ h_7(x_a^\alpha,x_d^{-\delta},x_c^\gamma,x_b^\beta) - h_7(x_a^\alpha,x_b^{-\beta},x_c^\gamma,x_d^{-\delta}).
\end{aligned}$
\eqnum\label{eq:part2h7}

\item $\Mul{x_d^\delta}{x_b}^\beta \cdot h_7(x_a^\alpha,x_b^{-\beta},x_c^\gamma,x_b^\beta) =
h_7(x_a^\alpha,x_b^{-\beta},x_c^\gamma,x_b^\beta).$
\eqnum\label{eq:part2fixedh7}

\item $\begin{aligned}[t]
\Mul{x_f^\zeta}{x_e}^\epsilon \cdot h_6(x_a^\alpha, x_f^\zeta, x_c^\gamma, x_d^\delta, x_f^\zeta)
=\ &h_6(x_a^\alpha, x_f^\zeta, x_c^\gamma, x_d^\delta, x_f^\zeta) -h_7(x_a^\alpha, x_e^\epsilon, x_d^\delta, x_f^\zeta)\\
&-h_7(x_a^\alpha, x_e^\epsilon, x_d^\delta, x_e^\epsilon)+h_6(x_a^\alpha, x_f^\zeta, x_c^\gamma, x_d^\delta, x_e^\epsilon)\\
&-h_7(x_a^\alpha, x_e^{-\epsilon}, x_d^\delta, x_f^\zeta)-h_7(x_a^\alpha, x_e^{-\epsilon}, x_d^\delta, x_e^\epsilon)\\
&+h_6(x_a^\alpha, x_e^\epsilon, x_c^\gamma, x_d^\delta, x_f^\zeta)+h_6(x_a^\alpha, x_e^\epsilon, x_c^\gamma, x_d^\delta, x_e^\epsilon).\end{aligned}$
\eqnum\label{eq:specialh6}

\item $\Mul{x_f^\zeta}{x_e}^\epsilon \cdot h_6(x_a^\alpha, x_e^\epsilon, x_c^\gamma, x_d^\delta, x_e^\epsilon) =
h_6(x_a^\alpha, x_e^\epsilon, x_c^\gamma, x_d^\delta, x_e^\epsilon).$
\eqnum\label{eq:specialfixedh6}

\end{compactitem}
\end{lemma}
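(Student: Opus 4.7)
The plan mirrors the proofs of Lemma \ref{lemma:7pt2} and the two unlabeled lemmas immediately preceding this one: every equation in the statement asserts that two specific words in $R \cap [F,F]$ differ by an element of $[F,R]$, where $F = F(S_{\IA}(n))$ and $R \subset F$ is the normal closure of the full set of relations of our L-presentation for $\IA_n$. Via the Hopf isomorphism $\HH_2(\IA_n) \cong (R\cap [F,F])/[F,R]$, such an equality can be certified by producing an explicit sequence of insertions and cyclic rearrangements of the basic relations R0--R9 from Table \ref{table:basicrelsian} that transforms a representative of the left-hand side into a representative of the right-hand side. The \verb+applyrels+ routine from \S \ref{section:computationalframework} is precisely designed to carry out such rewritings and automatically verify them.

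For each of the ten identities I would first compute the left-hand side as a word in $F$ by evaluating the substitution endomorphism $\ttheta$ of the named transvection on the standard commutator-of-commutators word representing $h_6(\cdots)$ or $h_7(\cdots)$, using the formulas of Table \ref{table:thetadef}; by Proposition \ref{proposition:autaction}, the resulting word represents the conjugation action of the transvection on the underlying element of $\IA_n$, and thus the same homology class. I would then multiply on the right by the inverses of the standard commutator words representing each $h_i$ term on the right-hand side and invoke \verb+applyrels+ to reduce to the empty word using R0--R9 together with the derived ``extra'' relations in \verb+exiarel+ that were already justified in \S \ref{section:somecomputations}. The resulting GAP code block, call it \verb+lemma7pt5+, would evaluate to a list of empty words, thereby certifying each of the ten equations.

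The four ``fixed-point'' identities \eqref{eq:genericfixedh6}, \eqref{eq:part1fixedh7}, \eqref{eq:part2fixedh7} and \eqref{eq:specialfixedh6} should be essentially immediate: in each case the transvection acts on the underlying commutator word only by conjugation by a single $\Con{x_?}{x_?}$, and conjugation is invisible in $(R\cap[F,F])/[F,R]$. The main obstacle will be \eqref{eq:specialh6}, and to a lesser extent \eqref{eq:generich7} and \eqref{eq:part1h7}, where the action expands into many commutator-transvection generators with coincident subscripts. In \eqref{eq:specialh6} the second parameter of $h_6$ is forced to coincide with the transvection subscript $x_f^\zeta$, so the expansion of $\ttheta$ produces collisions that must be resolved using R2, R8 and R9 in their degenerate (``$\{b,c\}\cap\{e,f\}\neq\varnothing$'') configurations. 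It is also crucial to have the $h_6$-rearrangement identities \eqref{eq:h6secondparam} and \eqref{eq:h6secondthird} of Lemma \ref{lemma:7pt2} already in hand, since these are exactly what allow us to trade $h_6$-terms with coincident parameters for sums of $h_6$- and $h_7$-terms in general position. Once this catalogue of prior reductions is assembled, the search for each required rewriting is finite and mechanical, even if the certifying sequences are long.
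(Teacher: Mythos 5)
Your proposal matches the paper's approach exactly: the paper's entire proof of this lemma is the single sentence ``These computations appear in \verb+lemma7pt5+,'' i.e., each identity is certified by a GAP computation in the Hopf-formula model $(R\cap[F,F])/[F,R]$, obtained by applying $\ttheta$ of the transvection to the commutator word for $h_6$ or $h_7$, cancelling against the right-hand side, and reducing to the empty word with \verb+applyrels+ using R0--R9 and the \verb+exiarel+ helpers. Your identification of the difficult case (\eqref{eq:specialh6}, where the coincidence of the second $h_6$ parameter with the transvected index forces degenerate instances of R2/R8/R9 and the rearrangements \eqref{eq:h6secondparam}--\eqref{eq:h6secondthird}) and your observation that the four ``fixed'' identities are trivial because $\ttheta$ leaves every generator in the relevant word unchanged (or only conjugates it) are both consistent with what the paper's computation does.
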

\begin{proof}
These computations appear in \verb+lemma7pt5+.
\end{proof}

\begin{lemma}\label{lemma:7pt6}
The following identities hold in $\HH_2(\IA_n)$.  The letters in subscripts are assumed distinct.
\begin{compactitem}

\item $\begin{aligned}[t]
\Mul{x_b^\beta}{x_d}^\delta\cdot h_9(x_a^\alpha,x_b^\beta,x_c^\gamma)
=\ &h_9(x_a^\alpha,x_b^\beta,x_c^\gamma) +h_9(x_a^\alpha,x_d^\delta,x_c^\gamma)\\
   &+\text{ (H1--H5 generators).}\end{aligned}$
\eqnum\label{eq:h9}

\item 
$\Mul{x_b^\beta}{x_d}^\delta\cdot h_9(x_a^\alpha,x_d^\delta,x_c^\gamma)
= h_9(x_a^\alpha,x_d^\delta,x_c^\gamma)
+\text{ (H1--H6 generators).}$
\eqnum\label{eq:h9fixed}

\end{compactitem}
\end{lemma}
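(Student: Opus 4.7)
The plan is to prove both identities by direct computation in the free group $F=F(S_{\IA}(n))$ using the Hopf isomorphism $\HH_2(\IA_n) \cong (R\cap[F,F])/[F,R]$, following the methodology already used in Lemmas \ref{lemma:7pt2}--\ref{lemma:7pt6above}. The general recipe is: pick a word $w \in R\cap[F,F]$ representing the left-hand side of the equation, apply the substitution endomorphism $\ttheta(\Mul{x_b^\beta}{x_d}^\delta)$ letter-by-letter according to Table \ref{table:thetadef}, and then reduce the resulting long word to the claimed right-hand side by inserting pairs of relations from $R^0_{\IA}(n)$ (which is legal modulo $[F,R]$).

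For the first identity, I would start from the H9 relation representing $h_9(x_a^\alpha,x_b^\beta,x_c^\gamma)$. Every conjugation generator appearing in this relation has the index $b$ in either its first or second slot, and the commutator transvection $\Mulcomm{x_a^\alpha}{x_b^\beta}{x_c^\gamma}$ has $x_b^\beta$ in its middle slot, so all five relevant entries of Table \ref{table:thetadef} trigger and produce a sizeable expansion. The target expression $h_9(x_a^\alpha,x_b^\beta,x_c^\gamma) + h_9(x_a^\alpha,x_d^\delta,x_c^\gamma)$ plus H1--H5 error terms has an intuitive geometric source: the Nielsen move $\Mul{x_b^\beta}{x_d}^\delta$ is recorded by the rule $x_b^\beta \mapsto x_d^\delta x_b^\beta$, and this should bifurcate the H9 class into an H9 class with the old parameters and one with the new parameters, up to ``lower complexity'' corrections. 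For the second identity, where $x_b$ does not appear as a parameter, the action of $\Mul{x_b^\beta}{x_d}^\delta$ is still nontrivial on generators containing $x_d$; the fact that H6 generators (but not higher) are needed as error terms is consistent with the substitution rules in Table \ref{table:thetadef} producing extra commutator transvections when the transvected index $d$ appears in the middle slot of a generator in the H9 relation for $h_9(x_a^\alpha,x_d^\delta,x_c^\gamma)$.

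The main obstacle is the sheer bulk of bookkeeping. H9 is one of the longest relations in our set (the first commutator alone involves four conjugation moves), so after applying $\ttheta$ one obtains a word with dozens of letters, and the reduction uses many instances of R1--R9 together with the previously established identities (\ref{eq:generich1})--(\ref{eq:specialfixedh6}). This is precisely the type of verification that the paper hands over to \verb+h2ia.g+: one finds the correct form of the equation by hand (using the intuition above), and then mechanically checks that the difference between the two sides can be written as a product of conjugates of basic relations. I would follow this exact pattern, adding the computations to the list \verb+lemma7pt6+ analogous to the existing lists \verb+lemma7pt2+ through \verb+lemma7pt5+, with each computation consisting of a starting word (the difference of the two sides) and a sequence of \verb+applyrels+ insertions reducing it to the trivial word.
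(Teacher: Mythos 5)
Your proposal identifies the correct general methodology --- represent $h_9$ by a word in $R\cap[F,F]$, apply $\ttheta(\Mul{x_b^\beta}{x_d}^\delta)$, and reduce by inserting relations modulo $[F,R]$, verified mechanically in \verb+h2ia.g+ --- and this is indeed what the paper does. However, your description of the reduction step glosses over two specific obstructions that are precisely what makes Lemma~\ref{lemma:7pt6} require a genuine textual proof rather than a bare pointer to the computation list.

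First, the naive starting word is not the H9 relator itself. The paper does not reduce $\ttheta(\Mul{x_b^\beta}{x_d}^\delta)$ applied to the H9 word directly; instead it introduces auxiliary ``extra relations'' (\verb+exiarel(6)+, \verb+exiarel(7)+, \verb+exiarel(8)+) that represent $\pm h_9$ modulo H1, H4, H5 (and, for \verb+exiarel(6)+, an R6 term). The expanded word \verb+exiarel(8,[xa,xb,xc,xd])+ is chosen specifically because it ``behaves better under this action,'' and the reduction is organized around moving between these proxies. Your proposal of just reducing ``the difference of the two sides'' starting from the raw H9 relator is not what is done, and it is not at all clear that it would be tractable without this intermediate scaffolding.

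Second, and more substantively, you write that the reduction proceeds ``by inserting pairs of relations from $R^0_{\IA}(n)$ (which is legal modulo $[F,R]$).'' But the actual reduction is forced to insert R5 and R6 relations that do \emph{not} cancel in inverse pairs --- and R5, R6 do not lie in $[F,F]$, so their classes $\Elt{\cdot}$ are not elements of $\HH_2(\IA_n)$ at all. To conclude that the residual contribution is a sum of H-type generators, the paper invokes Lemma~\ref{lemma:R5R6H2}: since the input and output of the reduction both lie in $R\cap[F,F]$, the net R5/R6 contribution must lie in $\HH_2(\IA_n)$ and therefore equals a sum of H2 relations. Without this step, the conclusion ``$+$ (H1--H5 generators)'' in \eqref{eq:h9} (and similarly ``$+$ (H1--H6 generators)'' in \eqref{eq:h9fixed}, where \verb+exiarel(2)+ contributes H2, H5, H6) does not follow from the mechanical reduction alone. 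You should make both of these points explicit before handing the verification over to GAP.
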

\begin{proof}
These computations are contained in the list \verb+lemma7pt6+, but they take some explanation.
We use the relations \verb+exiarel(5,[...])+ frequently in these computations; these relations are always combinations of H1 and H4 relations.
We use \verb+exiarel(7,[xa,xb,xc])+ to represent $h_9(x_a^\alpha,x_b^\beta,x_c^\gamma)$, but we also use other relations in this computation.
The relation \verb+exiarel(8,[xa,xb,xc,xd])+ is an expanded version of this relation that behaves better under this action.
Its derivation in \verb+exiarelchecklist+ shows that it differs from $-h_9(x_a^\alpha,x_b^\beta,x_c^\gamma)$ only by H1, H4 and H5 relations.
We also use \verb+exiarel(6,[xa,xb,xc,xd])+; this differs from $h_9(x_a^\alpha,x_b^\beta,x_c^\gamma)$ by H1 and H4 relations, and an R6 relation.
This is apparent in the derivation of \verb+exiarel(7,[xa,xb,xc])+ in \verb+exiarelchecklist+.

The computation justifying equation~\eqref{eq:h9} shows directly that the image of the relation \verb+exiarel(8,[xa,xb,xc,xd])+ under \verb+[["M",xb,xd]]+ can be reduced to the trivial word by applying:
\begin{compactitem}
\item \verb+exiarel(6,[xa,xd,xc])+,
\item \verb+iw(exiarel(7,[xa,xb,xc]))+, 
\item H1--H5 relations (some in \verb+exiarel(5,[...])+ relations), 
\item R5 and R6 relations, and 
\item elements from $[F,R]$ (including inverse pairs of instances of \verb+exiarel(1,[...])+ and \verb+exiarel(3,[...])+).
\end{compactitem}
Since we start and end with elements of $R\cap[F,F]$ in this computation, the use  of R5 and R6 (in one case inside \verb+exiarel(6,[xa,xb,xc,xd])+) is inconsequential; by Lemma~\ref{lemma:R5R6H2} this can only change the outcome by H2 relations.
So this proves equation~\eqref{eq:h9}.

The computation justifying equation~\eqref{eq:h9fixed} is similar, but uses an instance of the relation \verb+exiarel(2,[...])+.
The derivation in \verb+exiarelchecklist+ shows that this relation is a combination of H2, H5, and H6 relations, and elements of $[F,R]$. 
\end{proof}

\subsection{Coinvariants of \texorpdfstring{$\HH_2(\IA_n)$}{H2(IAn)}}
\label{section:coinvariants}

In this section, we prove Theorem \ref{maintheorem:coinvariants}, which asserts that for the $\GL_n(\Z)$-coinvariants
of $\HH_2(\IA_n)$ vanish for $n \geq 6$.
\begin{proof}[Proof of Theorem~\ref{maintheorem:coinvariants}]
We use the generators H1--H9 from Table~\ref{table:commutatorrelators}.
To show that the coinvariants $\HH_2(\IA_n)_{\GL_n(\Z)}$ are trivial, we show that the coinvariance class of each of these generators is trivial.
The covariants are defined to be the largest quotient of $\HH_2(\IA_n)$ with a trivial induced action of $\GL(n,\Z)$.
Since the action of $\GL(n,\Z)$ on $\HH_*(\IA_n)$ is induced by the action of $\Aut(F_n)$ on $\IA_n$, this means that
$\HH_2(\IA_n)_{\GL_n(\Z)}$ is the quotient of $\HH_2(\IA_n)$ by the subgroup generated by classes of the form $f\cdot r-r$, where $f\in \Aut(F_n)$ and $r\in\HH_2(\IA_n)$.
Elements of the form $f\cdot r-r$ are called \emph{coboundaries}.

In fact, we have already shown in Lemmas~\ref{lemma:7pt1}--\ref{lemma:7pt6} that the subgroup of  $\HH_2(\IA_n)$ generated by coboundaries contains our generators from Theorem~\ref{theorem:h2gen}.
We show this for each generator in the equations above as follows.
Each equation shows how to express the given generator as a sum of coboundaries and generators previously expressed in terms of coboundaries:
\begin{compactitem}
\item H1 in equations~\eqref{eq:generich1} and~\eqref{eq:specialh1}, also using an observation from Lemma~\ref{lemma:7pt1},
\item H2 in equations~\eqref{eq:generich2} and~\eqref{eq:bothsidesh2},
\item H3 in equation~\eqref{eq:h3}, also using an observation from Lemma~\ref{lemma:7pt1},
\item H4 in equation~\eqref{eq:h4},
\item H5 in equation~\eqref{eq:h5},
\item generic H6 in equation~\eqref{eq:generich6},
\item H7 in equations~\eqref{eq:generich7}, \eqref{eq:part1h7}, and~\eqref{eq:part2h7},
\item the special cases of H6 in equation~\eqref{eq:specialh6}, also using equations~\eqref{eq:h6secondparam} and~\eqref{eq:h6secondthird},
\item H8 in equation~\eqref{eq:h8} and
\item H9 in equation~\eqref{eq:h9}. \qedhere
\end{compactitem}
\end{proof}
\begin{remark}
The equations above assume that distinct subscripts label distinct elements.
This means that equation~\eqref{eq:generich6} requires six basis elements.
We do not know if the generic H6 generator (a five-parameter generator) can be expressed as a sum of coboundaries without using a sixth basis element.
Therefore we require $n\geq 6$ in the statement and we do not know if the theorem holds for smaller $n$.
\end{remark}

\subsection{Second homology of congruence subgroups}
\label{section:congruence}

In this section, we prove Theorem \ref{maintheorem:congruence}, which asserts that $\HH_2(\Aut(F_n,\ell);\Q)=0$ for $n \geq 6$ and $\ell \geq 2$.
The key to this is the following lemma.  Let $\GL_n(\Z,\ell)$ be the level-$\ell$ congruence subgroup of $\GL_n(\Z)$, that is, the kernel
of the natural map $\GL_n(\Z) \rightarrow \GL_n(\Z/\ell)$.

Like in Theorem~\ref{maintheorem:coinvariants}, we require $n\geq 6$ because of equation~\eqref{eq:generich6}.  We do not know if the result holds for smaller $n$.

\begin{lemma}
\label{lemma:rationalh2coinv}
For $n \geq 6$ and $\ell \geq 2$ we have $(\HH_2(\IA_n;\Q))_{\GL_n(\Z,\ell)} = 0$.
\end{lemma}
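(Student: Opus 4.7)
The plan is to adapt the proof of Theorem~\ref{maintheorem:coinvariants}, replacing each Nielsen transvection $g$ used there by its $\ell$-th power and exploiting rationality to divide by $\ell$ at the end. Write $R \subset \HH_2(\IA_n;\Q)$ for the $\Q$-subspace spanned by the $\Aut(F_n,\ell)$-coboundaries $\{f \cdot r - r : f \in \Aut(F_n,\ell),\, r \in \HH_2(\IA_n;\Q)\}$, so that $(\HH_2(\IA_n;\Q))_{\GL_n(\Z,\ell)} = \HH_2(\IA_n;\Q)/R$. Two facts drive the proof. First, because $\Aut(F_n,\ell)$ is normal in $\Aut(F_n)$, the subspace $R$ is in fact an $\Aut(F_n)$-submodule: $h(fr-r) = (hfh^{-1})(hr) - hr$ with $hfh^{-1} \in \Aut(F_n,\ell)$ whenever $f \in \Aut(F_n,\ell)$. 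Second, for any Nielsen transvection $g = \Mul{x_i^\alpha}{x_j}^\beta$, the power $g^\ell$ lies in $\Aut(F_n,\ell)$, so $(g^\ell - 1) \cdot r \in R$ for every $r \in \HH_2(\IA_n;\Q)$.

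Each generator from Table~\ref{table:commutatorrelators} will be placed in $R$ in turn, with induction ordered as generic H1, special H1, H3, H2, H4, H5, generic H6, generic H7, special H7, special H6, H8, H9. The core iteration trick is this: whenever an equation from Lemmas~\ref{lemma:7pt2}--\ref{lemma:7pt6} has the shape $g \cdot X = X + Y + Z_1$ and is paired with a ``fixed'' equation $g \cdot Y = Y + Z_2$, where $Z_1$ and $Z_2$ involve only generators already placed in $R$, then since $R$ is $\Aut(F_n)$-invariant each $g^i Z_j$ lies in $R$, and an induction on $k$ gives $g^k \cdot X \equiv X + kY \pmod{R}$. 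Setting $k = \ell$ yields $\ell Y \equiv (g^\ell - 1) \cdot X \equiv 0 \pmod{R}$, and rationality forces $Y \in R$. The paired equations \eqref{eq:generich1}/\eqref{eq:generich1fixed}, \eqref{eq:h4}/\eqref{eq:fixedh4}, \eqref{eq:generich6}/\eqref{eq:genericfixedh6}, \eqref{eq:generich7}/\eqref{eq:genericfixedh7}, \eqref{eq:part1h7}/\eqref{eq:part1fixedh7}, \eqref{eq:part2h7}/\eqref{eq:part2fixedh7}, \eqref{eq:specialh6}/\eqref{eq:specialfixedh6}, \eqref{eq:h8}/\eqref{eq:h8fixed}, and \eqref{eq:h9}/\eqref{eq:h9fixed} handle all generators with fixed companions.

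The generators H2, H3, H5 and the special parameter cases of H1 have no fixed companion; they are handled using $\Aut(F_n)$-invariance of $R$ applied to previously-placed generators. For example, equation~\eqref{eq:h3} reads $h_3 = h_1 - g \cdot h_1$, and once $h_1 \in R$ by iteration, $g \cdot h_1 \in R$ by invariance, so $h_3 \in R$. Equation~\eqref{eq:generich2} then yields $h_2 = h_3 - g \cdot h_3 \in R$, and equation~\eqref{eq:h5} places $h_5$ in $R$ using $h_1, h_3, h_4 \in R$ together with $(g-1) \cdot h_1 \in R$. Special parameter cases follow from the generic ones by the same invariance argument applied to permutation automorphisms in $\Aut(F_n)$.

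The main technical obstacle is tracking the ``junk'' terms $Z_1$ and $Z_2$ during iteration: they are not themselves $g$-fixed but evolve as $g^i Z_j$ under repeated application, so the computation only closes because $R$ absorbs the entire $\Aut(F_n)$-orbit of each junk term. Rationality cannot be dispensed with: one obtains $\ell Y$ as an element of $R$ and must divide by $\ell$ to recover $Y$ itself. The restriction $n \geq 6$ is inherited from equation~\eqref{eq:generich6} via Theorem~\ref{maintheorem:coinvariants}.
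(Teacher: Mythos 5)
Your proposal is correct and matches the paper's proof in essence: the paper's ``first approach'' is exactly your telescoping iteration $f^\ell \cdot r - r = \ell s$ followed by dividing by $\ell$, its ``second approach'' is your invariance argument $s = (1-g)\cdot r \in R$ once $r \in R$, and the generators are dispatched in the same order using the same equations from Lemmas~\ref{lemma:7pt2}--\ref{lemma:7pt6}. Your explicit observation that the span $R$ of $\Aut(F_n,\ell)$-coboundaries is $\Aut(F_n)$-stable (by normality of $\Aut(F_n,\ell)$) is the same fact the paper uses implicitly when it works in the quotient $(\HH_2(\IA_n;\Q))_{\GL_n(\Z,\ell)}$, which carries a natural $\GL_n(\Z)$-action.
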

\begin{proof}
Again we use our generators from Theorem~\ref{theorem:h2gen}.
The universal coefficient theorem implies that $\HH_2(\IA_n;\Q)$ is generated by the images of our generators from $\HH_2(\IA_n)$.

We have two approaches for showing that a generator has trivial image.
The first is the following: if $f\in \Aut(F_n)$ and $r,s\in \HH_2(\IA_n)$ with $f\cdot r-r=s$ and $f\cdot s=s$ in $(\HH_2(\IA_n;\Q))_{\GL_n(\Z,\ell)}$, then 
\[f^\ell\cdot r-r=\ell s \quad\text{in $(\HH_2(\IA_n;\Q))_{\GL_n(\Z,\ell)}$}.\]
If further, $f^\ell$ lies in $\Aut(F_n,\ell)$, then this shows that $s$ is trivial in $(\HH_2(\IA_n;\Q))_{\GL_n(\Z,\ell)}$.

The second approach is simpler:
if $f\in \Aut(F_n)$ and $r,s\in \HH_2(\IA_n)$ with $f\cdot r-r=s$ 
and $r=0$ in $(\HH_2(\IA_n;\Q))_{\GL_n(\Z,\ell)}$, then of course, $s=0$ in $(\HH_2(\IA_n;\Q))_{\GL_n(\Z,\ell)}$.

We show the generators have trivial images as follows:
\begin{compactitem}
\item generic H1 using equations~\eqref{eq:generich1} and~\eqref{eq:generich1fixed} by the first approach,
\item special cases of H1 using equation~\eqref{eq:specialh1} by the second approach, and using Lemma~\ref{lemma:7pt1},
\item H3 by equation~\eqref{eq:h3} using the second approach, and using Lemma~\ref{lemma:7pt1},
\item H2 by equations~\eqref{eq:generich2} and~\eqref{eq:bothsidesh2}, using the second approach,
\item H4 by equations~\eqref{eq:h4} and~\eqref{eq:fixedh4}, using the first approach,
\item H5 by equation~\eqref{eq:h5}, by the second approach,
\item generic H6 by equations~\eqref{eq:generich6} and~\eqref{eq:genericfixedh6}, using the first approach,
\item generic H7 by equations~\eqref{eq:generich7} and~\eqref{eq:genericfixedh7}, by the first approach,
\item special cases of H7 by equations~\eqref{eq:part1h7} and~\eqref{eq:part1fixedh7}, and by equations~\eqref{eq:part2h7} and~\eqref{eq:part2fixedh7}, both by  the first approach,
\item one special case of H6 by equations~\eqref{eq:specialh6} and~\eqref{eq:specialfixedh6}, by the first approach,
\item other special cases of H6 using the first case and equations~\eqref{eq:h6secondparam} and~\eqref{eq:h6secondthird},
\item H8 by equations~\eqref{eq:h8} and~\eqref{eq:h8fixed} by the first approach and
\item H9 by equations~\eqref{eq:h9} and~\eqref{eq:h9fixed} by the first approach. \qedhere
\end{compactitem}
\end{proof}

\begin{proof}[{Proof of Theorem \ref{maintheorem:congruence}}]
We examine the Hochschild-Serre spectral sequence associated to the short exact sequence
\begin{equation}
\label{eqn:autfnlex}
1 \longrightarrow \IA_n \longrightarrow \Aut(F_n,\ell) \longrightarrow \GL_n(\Z,\ell) \longrightarrow 1.
\end{equation}
First, the Borel stability theorem \cite{BorelStability1} implies that $\HH_2(\GL_n(\Z,\ell);\Q)=0$.
Next, recall from the introduction that $\HH_1(\IA_n;\Q) \cong \Hom(\Q^n,\bigwedge^2 \Q^n)$; the group $\GL_n(\Z)$ acts
on this in the obvious way.  
Since $\Hom(\R^n,\bigwedge^2 \R^n)$ is an irreducible representation of the algebraic group $\SL_n(\R)$, it follows
from work of Borel \cite[Proposition 3.2]{BorelDensity} that $\HH_1(\IA_n;\Q)$ is an irreducible representation
of $\GL_n(\Z,\ell)$ (we remark that the above reference is one of the steps in the original proof of the Borel
density theorem; the result can also be derived directly from the Borel density theorem).
It thus follows from the extension of the Borel stability theorem to nontrivial coefficient systems \cite{BorelStability2}
that
\[\HH_1(\GL_n(\Z,\ell);\HH_1(\IA_n;\Q)) = 0.\]
Lemma \ref{lemma:rationalh2coinv} says that
\[\HH_0(\GL_n(\Z,\ell);\HH_2(\IA_n;\Q)) \cong (\HH_2(\IA_n;\Q))_{\GL_n(\Z,\ell)} = 0.\]
The $p+q=2$ terms of the Hochschild-Serre spectral sequence associated to \eqref{eqn:autfnlex} thus all vanish,
so $\HH_2(\Aut(F_n,\ell);\Q)=0$, as desired.
\end{proof}

\noindent
\begin{tabular*}{\linewidth}[t]{@{}p{\widthof{Department of Mathematical Sciences, 301 SCEN}+0.50in}@{}p{\linewidth - \widthof{Department of Mathematical Sciences, 301 SCEN} - 0.50in}@{}}
{\raggedright
Matthew Day\\
Department of Mathematical Sciences, 301 SCEN\\
University of Arkansas\\
Fayetteville, AR 72701\\
E-mail: {\tt matthewd@uark.edu}}
&
{\raggedright
Andrew Putman\\
Department of Mathematics\\
Rice University, MS 136 \\
6100 Main St.\\
Houston, TX 77005\\
E-mail: {\tt andyp@math.rice.edu}}
\end{tabular*}


\begin{thebibliography}{XX}
\begin{footnotesize}
\setlength{\itemsep}{1pt}

\bibitem{Andreadakis}
S. Andreadakis, On the automorphisms of free groups and free nilpotent groups, Proc. London Math. Soc. (3) 15 (1965), 239--268. 
\bibitem{BakerThesis}
O. Baker, The Jacobian map on Outer Space, thesis, Cornell University, 2011.

\bibitem{BartholdiBranch}
L. Bartholdi, Endomorphic presentations of branch groups, J. Algebra 268 (2003), no.~2, 419--443. 

\bibitem{BestvinaBuxMargalitIA}
M. Bestvina, K.-U. Bux\ and\ D. Margalit, Dimension of the Torelli group for ${\rm Out}(F\sb n)$, Invent. Math. 170 (2007), no.~1, 1--32. \arxiv{math/0603177}.

\bibitem{BoldsenDollerup}
S. K. Boldsen\ and\ M. Hauge Dollerup, Towards representation stability for the second homology of the Torelli group, Geom. Topol. 16 (2012), no.~3, 1725--1765.

\bibitem{BorelDensity}
A. Borel, Density properties for certain subgroups of semi-simple groups without compact components, Ann. of Math. (2) 72 (1960), 179--188.

\bibitem{BorelStability1}
A. Borel, Stable real cohomology of arithmetic groups, Ann. Sci. \'Ecole Norm. Sup. (4) 7 (1974), 235--272 (1975). 

\bibitem{BorelStability2}
A. Borel, Stable real cohomology of arithmetic groups. II, in {\it Manifolds and Lie groups (Notre Dame, Ind., 1980)}, 21--55, Progr. Math., 14 Birkh\"auser, Boston, MA.

\bibitem{BrendleMargalitPutman}
T. Brendle, D. Margalit\ and\ A. Putman, Generators for the hyperelliptic Torelli group and the kernel of the Burau representation at $t=-1$, Invent. Math. {\bf 200} (2015), no.~1, 263--310. 

\bibitem{BrownCohomology}
K. S. Brown, {\it Cohomology of groups}, corrected reprint of the 1982 original, Graduate Texts in Mathematics, 87, Springer, New York, 1994.

\bibitem{ChurchFarbJacobian}
T. Church\ and\ B. Farb, Parameterized Abel-Jacobi maps and abelian cycles in the Torelli group, J. Topol. 5 (2012), no.~1, 15--38. 

\bibitem{ChurchFarbRepStability}
T. Church\ and\ B. Farb, Representation theory and homological stability, Adv. Math. 245 (2013), 250--314.

\bibitem{ChurchPutmanCodim1}
T. Church\ and\ A. Putman, The codimension-one cohomology of $\SL_n \Z$, to appear in Geom. Topol., \arxiv{math/1507.06306}.

\bibitem{CohenPakianathan}
F. Cohen\ and\ J. Pakianathan, On automorphism groups of free groups and their nilpotent quotients, in preparation.

\bibitem{DayPutmanBirman}
M. Day\ and\ A. Putman, A Birman exact sequence for ${\rm Aut}(F\sb n)$, Adv. Math. 231 (2012), no.~1, 243--275, \arxiv{math/1104.2371}. 

\bibitem{DayPutmanComplex}
M. Day\ and\ A. Putman, The complex of partial bases for $F\sb n$ and finite generation of the Torelli subgroup of $\Aut(F_n)$, Geom. Dedicata 164 (2013), 139--153, \arxiv{math/1012.1914}.

\bibitem{DayPutmanBirmanIA}
M. Day\ and\ A. Putman, A Birman exact sequence for the Torelli subgroup of $\Aut(F_n)$, Int. J. Alg. Comp. 26 (2016), no.~3, 585-617, \arxiv{math/1507.08976}. 

\bibitem{DayPutmanBasisFree}
M. Day\ and\ A. Putman, A basis-free presentation for $\IA_n$, in preparation.

\bibitem{FarbIA}
B. Farb, Automorphisms of $F_n$ which act trivially on homology, in preparation.

\bibitem{GalatiusAut}
S. Galatius, Stable homology of automorphism groups of free groups, Ann. of Math. (2) 173 (2011), no.~2, 705--768. 

\bibitem{HatcherVogtmannCerf}
A. Hatcher\ and\ K. Vogtmann, Cerf theory for graphs, J. London Math. Soc. (2) 58 (1998), no.~3, 633--655.

\bibitem{HatcherWahl}
A. Hatcher\ and\ N. Wahl, Stabilization for mapping class groups of 3-manifolds, Duke Math. J. 155 (2010), no.~2, 205--269. 

\bibitem{JensenWahl}
C. A. Jensen\ and\ N. Wahl, 
Automorphisms of free groups with boundaries, 
Algebr. Geom. Topol. 4 (2004), 543--569.

\bibitem{JohnsonHomo}
D. Johnson, An abelian quotient of the mapping class group ${\cal I}\sb{g}$, Math. Ann. 249 (1980), no.~3, 225--242. 

\bibitem{KawazumiMagnus}
N. Kawazumi, Cohomological aspects of Magnus expansions, preprint 2005.

\bibitem{KrsticMcCool}
S. Krsti\'c\ and\ J. McCool, The non-finite presentability of ${\rm IA}(F\sb 3)$ and ${\rm GL}\sb 2({\bf Z}[t,t\sp {-1}])$, Invent. Math. 129 (1997), no.~3, 595--606.

\bibitem{LyndonSchupp}
R. C. Lyndon\ and\ P. E. Schupp, {\it Combinatorial group theory}, Springer, Berlin, 1977. 

\bibitem{MagnusGenerators}
W. Magnus, \"Uber $n$-dimensionale Gittertransformationen, Acta Math. 64 (1935), no.~1, 353--367.

\bibitem{NielsenIA2}
J. Nielsen, Die Isomorphismen der allgemeinen, unendlichen Gruppe mit zwei Erzeugenden, Math. Ann. 78 (1917), no.~1, 385--397. 

\bibitem{PutmanInfinite}
A. Putman, An infinite presentation of the Torelli group, Geom. Funct. Anal. 19 (2009), no.~2, 591--643.

\bibitem{PutmanPresentation}
A. Putman, Obtaining presentations from group actions without making choices, Algebr. Geom. Topol. 11 (2011), no.~3, 1737--1766.

\bibitem{PutmanH2Congruence}
A. Putman, The second rational homology group of the moduli space of curves with level structures, Adv. Math. 229 (2012), no.~2, 1205--1234.

\bibitem{SatohCongruence}
T. Satoh, The abelianization of the congruence IA-automorphism group of a free group, Math. Proc. Cambridge Philos. Soc. 142 (2007), no.~2, 239--248.

\bibitem{SatohSurvey}
T. Satoh, A survey of the Johnson homomorphisms of the automorphism groups of free groups and related topics, preprint 2012.

\end{footnotesize}
\end{thebibliography}
\end{document}